\documentclass[a4paper,10pt,leqno,english]{article}
\usepackage{tikz,stix}
\usetikzlibrary{positioning}
\usepackage[T1]{fontenc}
\usepackage[utf8]{inputenc}
\usepackage[a4paper, margin=2cm]{geometry}
\usepackage{babel}
\usepackage{amsthm,amsmath,amsfonts,amssymb}
\usepackage{dsfont}
\usepackage{enumitem,pifont,physics,comment}
\usepackage{hyperref}
\usepackage{float}
\newtheorem{theorem}{Theorem}
\newtheorem{proposition}{Proposition}
\newtheorem{lemma}{Lemma}
\newtheorem{corollary}{Corollary}
\newtheorem{examples}{Examples}

\newtheorem{definition}{Definition}[section]

\newtheorem{remark}{Remark}

\usepackage[backend=biber,style=alphabetic]{biblatex}
\usepackage{csquotes}
\usepackage{amsmath,amsthm,amssymb,amscd,color, xcolor,mathtools,url,tikz}
\usepackage{tikz}
\usetikzlibrary{positioning}

\usepackage{titlesec}

\def\R{\mathbb{R}}
\def\N{\mathbb{N}}
\def\Z{\mathbb{Z}}

\def\T{\mathbb{T}}

\let\eps\varepsilon
\let\epsilon\varepsilon

\let\oldsum\sum
\renewcommand{\sum}{\displaystyle\oldsum}
\let\oldprod\prod
\renewcommand{\prod}{\displaystyle\oldprod}
\let\oldinf\inf
\renewcommand{\inf}{\displaystyle\oldinf}
\let\oldsup\sup
\renewcommand{\sup}{\displaystyle\oldsup}

\let\leq\leqslant
\let\geq\geqslant
\newlength{\oldparindent}
\newcommand{\myindent}{\hspace{\oldparindent}}

\usepackage{etoolbox}

\title{$L^4$ norm of spectral projectors on polynomially small frequency intervals for $S^1$-symmetric surfaces}
\author{Ambre Chabert\thanks{Département de Mathématiques et Applications, Ecole Normale Supérieure, UMR 8553, 45 rue d'Ulm. Bureau C11. 75230 PARIS Cedex 05, France. Université Paris Cité and Sorbonne Université, CNRS, IMJ-PRG, F-75013 Paris, France. Email adress : ambre.chabert@ens.fr}}

\begin{document}
\maketitle

\begin{abstract}
    For $(M,g)$ a compact Riemannian surface with Laplace-Beltrami operator $\Delta$, and for $\lambda,\delta \geq 0$, let $P_{\lambda,\delta}$ be the spectral projector on the frequency interval $[\lambda-\delta,\lambda+\delta]$ associated to $\sqrt{-\Delta}$. For the Euclidean disk, away from its boundary, we improve the upper bound on the $L^2\to L^4$ norm of $P_{\lambda,\delta}$ in the regime where the bandwidth $\delta$ is polynomially small compared to the target frequency $\lambda$. Decomposing on the explicit joint eigenbasis of $\left(\sqrt{-\Delta}, \frac{1}{i}\frac{\partial}{\partial \theta}\right)$ given in terms of Bessel eigenfunctions, which are well-approximated by oscillatory functions outside of their caustic set, we reduce the analysis to a number of precise quantitative estimates of nonstationary phase oscillatory integrals. We strongly use convexity phenomenon both for these estimates, and then for the summation of the contribution of all eigenfunctions through a new arithmetic estimate. The method extends to other $S^1$-symmetric surfaces satisfying similar conditions on the induced completely integrable structure.
\end{abstract}

\section{Introduction}\label{secintro}

\myindent Let $(M,g)$ be a compact Riemannian manifold, with Laplace-Beltrami operator $\Delta$ (in the case $\partial M \neq 0$, we fix Dirichlet boundary conditions). For $\lambda,\delta \geq 0$, let $P_{\lambda,\delta}$ be the
\textit{spectral projector} associated to $\sqrt{-\Delta}$ on the frequency interval $[\lambda-\delta,\lambda +\delta]$, which may be defined through functional calculus as
\[P_{\lambda,\delta} := 1_{[\lambda-\delta, \lambda + \delta]} (\sqrt{-\Delta}).\]

\myindent The question of estimating the operator norm of $P_{\lambda,\delta}$, seen as an operator from $L^2$ to $L^p$, $2\leq p \leq +\infty$, intertwines precise estimates of the spatial distribution of the eigenfunctions and of the spectrum of the Laplacian, and the geometry of $M$. While the case $\delta = 1$ is fully understood since the pioneering works of Sogge \cite{sogge1988concerning}, and is purely local, the case $\delta \ll 1$, which depends on the global geometry, is still open for most settings, see the review of Germain \cite{germain2023l2}. In this article, we investigate the case $p = 4$, which corresponds to the \textit{geodesic focusing regime} (see below), for the Euclidean disk, in the regime where the bandwidth $\delta$ is \textit{polynomially small} compared to the target frequency $\lambda$. Moreover, we argue how to extend these results to other $S^1$ symmetric surfaces, after microlocalizing on \textit{regular} open sets of the cotangent space, as defined below. We will also explain how the case of the disk illustrates the role of \textit{concentration on the caustics} in the theory of $L^p$ norm of spectral projectors.

\subsection{Main result}\label{subsecmaint}

\myindent Let $\mathbb{D}^2 = D(0,1) \subset \R^2$ be the unit disk with flat metric. We denote $p(x,\xi)$ the norm of cotangent vectors $(x,\xi) \in T^*\mathbb{D}^2$, which is the principal symbol of $\sqrt{-\Delta}$. If, on $\mathbb{D}^2 \backslash 0$, we set the polar coordinates $(r,\theta)$, we define for $(x,\xi) \in T^*\mathbb{D}^2$ the \textit{angular momentum} $\Theta(x,\xi)$ as the dual variable of $\theta$. We prove the following theorem.

\begin{theorem}\label{main}
    Let $\eps, \eta > 0$. Let $\chi(x,D)$ be a pseudodifferential microlocalizer inside the cone 
    \[\{(x,\xi) \in T^*\mathbb{D}^2 \qquad \Theta(x,\xi) \leq (1 - \eps/2) p(x,\xi)\}.\]
    \myindent Then, there holds, for any $\lambda \geq 1$,
    \[\forall \delta \geq \lambda^{-1/3 + \eta}, \qquad \left\|P_{\lambda,\delta} \chi(x,D)\right\|_{L^2(\mathbb{D}^2) \to L^4(\mathbb{D}^2)} \lesssim_{\eps,\eta} \lambda^{1/8} \delta^{1/8} \ln(\lambda)^{1/4}.\]
\end{theorem}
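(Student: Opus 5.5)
By the $TT^*$ argument, it suffices to bound $\|P_{\lambda,\delta}\chi\chi^*P_{\lambda,\delta}\|_{L^{4/3}\to L^4}$ by $\lambda^{1/4}\delta^{1/4}\ln(\lambda)^{1/2}$. Equivalently, it suffices to show that for any $f=\sum c_{n,k} e_{n,k}$ spectrally supported in $[\lambda-\delta,\lambda+\delta]$ and microlocalized in the cone, one has $\|f\|_{L^4}^2\lesssim \lambda^{1/4}\delta^{1/4}\ln\lambda\,\|c\|_{\ell^2}^2$.

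Here $e_{n,k}(r,\theta)=\kappa_{n,k}J_n(j_{n,k}r)e^{in\theta}$ is the joint eigenbasis of $(\sqrt{-\Delta},\frac1i\partial_\theta)$. The cone condition $\Theta\le(1-\eps/2)p$ means that, modulo $O(\lambda^{-\infty})$ errors, only angular momenta $|n|\le(1-\eps/2)\lambda$ contribute. Each such $e_{n,k}$ is supported, up to exponentially small errors, outside its caustic circle $r=n/j_{n,k}$.

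The first step is to write $J_n(j_{n,k} r)$ via its Debye/WKB asymptotics away from the turning point. This gives $e_{n,k}\approx a_{n,k}(r)\,e^{in\theta}\cos(\lambda\phi_{n}(r)+\pi/4)$, with $\phi$ the Hamilton–Jacobi phase $\sqrt{r^2-\nu^2}-\nu\arccos(\nu/r)$ and $\nu=n/\lambda$. Since we stay at a fixed distance $\eps$ from glancing, the boundary is harmless: there we have uniformly $\sqrt{1-\nu^2}\gtrsim\eps^{1/2}$. The near-caustic region $|r-\nu|\lesssim\lambda^{-2/3}$ is handled separately with Airy asymptotics, which give the usual $\lambda^{1/6}$ sup-norm loss on a set of width $\lambda^{-2/3}$.

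Next, expand $\|f\|_4^4=\|f^2\|_2^2$. Integrating in $\theta$ forces $n_1+n_2=n_3+n_4$. The $r$-integral reduces to a quadrilinear oscillatory integral
\[I=\int a_1a_2a_3a_4\,e^{i\lambda(\pm\phi_{1}\pm\phi_{2}\pm\phi_{3}\pm\phi_{4})(r)}\,r\,dr .\]
Using nonstationary phase, I would prove that $I$ is negligible unless the phase derivative $\sum\pm\sqrt{1-\nu_j^2/r^2}$ can vanish. This in turn requires the frequencies $(j_{n,k},n)$ to satisfy an approximate resonance: the four points $(\nu_j,\lambda_j/\lambda)$ on the curve $\{\text{energy}\approx1\}$ must nearly satisfy a parallelogram relation.

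When a stationary point exists, convexity of the curve $\nu\mapsto\sqrt{r^2-\nu^2}$, i.e.\ nondegeneracy of the second derivative of the phase, gives decay of order $\lambda^{-1/2}$ times the amplitudes. It also gives quantitative decay in the distance to resonance. Since $\delta\ge\lambda^{-1/3+\eta}$, the Airy scale $\lambda^{-1/3}$ in frequency is dominated, and these estimates hold uniformly.

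The main obstacle is the final arithmetic counting. The eigenvalues $j_{n,k}$ in the window are parametrized, via McMahon/Bohr–Sommerfeld, as $j_{n,k}\approx\lambda F(n/\lambda,k/\lambda)$ with $F$ the homogeneous action function. This function has a convex level set. We must bound, for fixed $(n_1,k_1),(n_3,k_3)$, the number of $(n_2,k_2),(n_4,k_4)$ with $n_1+n_2=n_3+n_4$, all four frequencies in $[\lambda-\delta,\lambda+\delta]$, and phases near-resonant. The lattice points in the annulus $\{|F-1|\le\delta/\lambda\}$ form a convex band of width $\sim\delta$. Fixing $n_2$ determines $n_4$, and convexity (curvature bounded below by $\eps$) limits $k_2,k_4$ to $O(1+\lambda^{1/2}\delta^{1/2})$ choices in a dyadic sense. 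Summing over dyadic distances to resonance produces the $\ln\lambda$.

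Combining these via Schur's test on the resulting quadrilinear form yields $\|f\|_4^4\lesssim\lambda^{1/2}\delta^{1/2}(\ln\lambda)\|c\|_2^4$, which is the claim. I expect the genuine difficulty to be making the convexity-based count uniform in the stationary-point location, especially when stationary points are near the caustics. There I would first try a dyadic decomposition in $r-\nu_j$, matching the Airy regime with the oscillatory regime.
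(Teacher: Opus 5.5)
There is a genuine gap: your analysis of the radial integral rests on a mechanism that does not occur. Whether $\partial_r$ of the phase vanishes is a condition on the $\nu_j$ alone, not on the radial indices. For a zero-sum quadruple with distinct entries it never vanishes.

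Set $g_r(\nu):=\partial_r f(r,\nu)=-\sqrt{1-\nu^2/r^2}$, which is increasing, negative and strictly convex in $\nu$, and order $\nu_1<\nu_2<\nu_3<\nu_4$ with $\nu_1+\nu_4=\nu_2+\nu_3$. Monotonicity and convexity show that every sign combination has nonvanishing $r$-derivative on $r>\nu_4$. The critical combination is $g_r(\nu_1)-g_r(\nu_2)-g_r(\nu_3)+g_r(\nu_4)$. This is a second-order average of a convex function, so it is $\gtrsim s(\mathbf n)b(\mathbf n)\lambda^{-2}(r-\nu_1)^{-3/2}$. Hence there are \emph{no} stationary points, and no $\lambda^{-1/2}$ stationary-phase decay to use.

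Conversely, non-stationarity does not make $I$ negligible. Near $r=1$ one only has $\lambda|\partial_r\Phi|\simeq s(\mathbf n)b(\mathbf n)/\lambda$, which can be $O(1)$, and the exponential pieces do not vanish at $r=1$. What one can actually prove, by integrating by parts with this gradient bound and keeping the endpoint term at $r=1$, is $|I_{\mathbf n}|\lesssim\lambda/(s(\mathbf n)b(\mathbf n))$. When $s(\mathbf n)b(\mathbf n)\sim\lambda$, the phase $\lambda\Phi$ varies by only $O(1)$ on $[1-\eps/4,1]$. So nothing better than $O(1)$ can be expected there, which is neither the $\lambda^{-1/2}$ you anticipate nor negligible.

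The counting step is also missing its key input. Each $n\in Z$ carries at most one radial index, since the zeros of $J_n$ are $\gtrsim 1$ apart and $\delta\ll 1$. So once $n_2$ is fixed, $k_2$ and $k_4$ are determined, and your count of ``choices of $k_2,k_4$'' constrains nothing. What must be controlled is, for a fixed inner pair $(n_2,n_3)$, how many side lengths $s$ make $\lambda/(sb)$ large.

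This is where the Dirichlet condition enters. Since $J_n(\lambda_n)=0$, the exterior asymptotics at $r=1$ put $f(1,n/\lambda)$ within $O(\delta/\lambda)$ of a fixed translate of $\frac{\pi}{\lambda}\Z$. Therefore $\lambda$ times the second-order average of the strictly convex function $-f(1,\cdot)$ equals $\pi(m_2+m_3-m_1-m_4)+O(\delta)$ with $m_j\in\Z$. It is also $\simeq s(\mathbf n)b(\mathbf n)/\lambda$. Consequently $sb\gtrsim\lambda\delta$ forces $sb\gtrsim\lambda$; this is a quadrilateral Jarnik lemma. It has a spacing version for quadruples sharing $(n_2,n_3)$, and a clustering argument then gives $\sum_s\lambda^2/(s^2b^2)\lesssim\max(\lambda\delta/t,1)$. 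Summing over $t\le\lambda$, using $\#Z\lesssim\lambda\delta$ and Cauchy--Schwarz, yields $(\lambda\delta\ln\lambda)^{1/2}$.

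Your ``distance to resonance'' intuition is right in spirit. In the non-degenerate range, $\lambda/(sb)$ is comparable to $1/|m_2+m_3-m_1-m_4|$, which is the defect in the parallelogram relation for the radial quantum numbers. But it enters through the size of a non-vanishing gradient at $r=1$, not through stationary points.

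Finally, your plan does not treat the degenerate quadruples with $\min(s,t)\le(\lambda\delta)^{1/2}$, including the diagonal ones where the phase is constant. These need the uniform bound $\|\lambda^{1/2}J_n(\lambda|\cdot|)\|_{L^4(\mathbb{D}^2)}\lesssim(\ln\lambda)^{1/4}$ together with a Cauchy--Schwarz count. That produces the term $(\lambda\delta)^{1/2}\ln\lambda$, and it is the actual source of the logarithm in the theorem.
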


\myindent Before giving some corollaries of the theorem, let us make some comments.
\begin{itemize}
    \item The exponent $1/8$ corresponds to Sogge's exponent $\gamma(4)$, as define below. Hence, our estimate is nearly coherent with the case $\delta = 1$, with a small logarithmic loss. In particular, the exponent on $\lambda$ is optimal. The optimal exponent on $\delta$ is bounded from above by $1/2$ (see below), but we have no reason to believe that $1/8$ is optimal. However, from a theoretical point of view, it is less important to find the optimal exponent on $\delta$, since any positive exponent already yields polynomial improvements over the case $\delta = 1$.
    \item It is necessary to exclude a small cone of the cotangent space due to the presence of \textit{whispering modes}, whose $L^4$ norm is of order $\lambda^{1/6}$ (see below), and which are microlocalized near the cotangent of $\partial \mathbb{D}^2$, which is parameterized by the equation $\Theta(x,\xi) = p(x,\xi)$.
    \item However, since the whispering modes are microlocalized on a neighborhood of size $O(\lambda^{-2/3})$ of $T^*\mathbb{D}^2$, it is likely that our estimate can be relaxed by choosing $\eps = \lambda^{-\iota}$ for some $\iota > O$. In fact, in our estimates, the implicit dependency in $\eps$ can always be bounded by $\eps^{-M}$ for some large number $M$. Thus, our theorem would still hold for $\eps = \lambda^{-\iota}$, provided $0 < \iota \ll 1$ is sufficiently small, and loosing a very small power of $\lambda$. However, we will not quantify this phenomenon since it wouldn't give the optimal $\iota$.
    \item With regards to the lower bound on the bandwidth $\lambda^{-1/3 + \eta}$, it comes from two restrictions. First, a spectral restriction on the remainder of the Weyl law for the Euclidean disk (see below \eqref{improvedremainder}), i.e. from the smallest scale on which the eigenvalues of $\sqrt{-\Delta}$ are well distributed. This restriction seems very hard to overcome since it is ultimately linked to the fundamental arithmetic question of the number of integers points inside a family of homothetic domains The second restriction is given by the spacing of points in the joint spectrum of $\sqrt{-\Delta}$ and of the infinitesimal generator of rotation $-i\frac{\partial}{\partial \theta}$. Now, this restriction can be avoided in the regime $\delta \leq \lambda^{-2/3}$, and, in particular, in the spectrally exact case i.e. for eigenfunctions, using for example Jarnik's Lemma \cite{jarnik1926gitterpunkte}. However, we will note pursue this in the present article.
\end{itemize}

\myindent We now give some straightforward corollaries of Theorem \ref{main}. Choosing $\chi(x,D)$ such that it equals $1$ on the smaller cone
\[\{(x,\xi) \in T^*\mathbb{D}^2 \qquad \Theta(x,\xi) \leq (1 - \eps) p(x,\xi)\},\]
which contains in particular $T^*D(0,1-\eps)$, the theorem implies the following estimate.

\begin{corollary}
    Let $\eps,\eta > 0$. For any $\lambda \geq 1$, there holds
    \[\forall \delta \geq \lambda^{-1/3 + \eta} \qquad \left\|P_{\lambda,\delta}\right\|_{L^2(\mathbb{D}^2) \to L^4(D(0,1-\eps))} \lesssim_{\eps,\eta} \lambda^{\frac{1}{8}}\delta^{\frac{1}{8}} \ln(\lambda)^{1/4}\]
\end{corollary}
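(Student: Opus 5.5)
The corollary should follow from Theorem \ref{main} by a microlocal decomposition of the identity on $D(0,1-\eps)$. Let $\psi\in C_c^\infty(D(0,1-\eps/2))$ with $\psi\equiv 1$ on $D(0,1-\eps)$. For $u\in L^2(\mathbb{D}^2)$,
\[\|P_{\lambda,\delta}u\|_{L^4(D(0,1-\eps))}\leq \|\psi P_{\lambda,\delta}u\|_{L^4(\mathbb{D}^2)}.\]
The point to exploit is that a point with $|x|\leq 1-\eps/2$ satisfies
\[|\Theta(x,\xi)| = |x|\,|\xi|\,|\sin\angle(x,\xi)| \leq (1-\eps/2)\,p(x,\xi),\]
so $T^*D(0,1-\eps/2)$ already sits inside the cone of Theorem \ref{main}, and in fact well inside the cone with $\eps/2$ replaced by $\eps/4$. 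I would therefore apply Theorem \ref{main} with $\eps/2$ in place of $\eps$. This only changes the implicit constant.

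The plan is to commute the cutoff past the projector. Choose a zeroth order pseudodifferential operator $\chi(x,D)$ whose symbol is supported in the cone $\{\Theta\leq(1-\eps/4)p\}$ and equals $1$ on $T^*D(0,1-\eps/2)$ for $|\xi|\geq 1$. Write
\[\psi P_{\lambda,\delta} = \psi\,P_{\lambda,\delta}\chi(x,D) + \psi\,P_{\lambda,\delta}(1-\chi(x,D)).\]
The first term is bounded by Theorem \ref{main}, since multiplication by $\psi$ is bounded on $L^4$.

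The second term is the main obstacle, because $P_{\lambda,\delta}$ is not a pseudodifferential operator and does not commute with $\chi$. I would write $P_{\lambda,\delta}=P_{\lambda,\delta}^2$ and let $\tilde P_{\lambda,\delta}$ denote $P_{\lambda,\delta}$ applied to the first factor, so that the second term is $\psi\tilde P_{\lambda,\delta}\,P_{\lambda,\delta}(1-\chi)$. The $L^2\to L^2$ norm of $P_{\lambda,\delta}(1-\chi)$ is at most $1$. The remaining task is to control the composition $\psi\tilde P_{\lambda,\delta}\,(1-\chi)$ directly.

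To do so, I would replace $\tilde P_{\lambda,\delta}$ by a smooth spectral cutoff. Since everything is localized in the interior, finite propagation speed of the wave group shows that $\psi\rho(\sqrt{-\Delta}-\lambda)$, for $\hat\rho$ supported near $0$, is a Fourier integral operator whose canonical relation preserves $\{\Theta\leq(1-\eps/2)p\}$ for bounded times. This holds because $\Theta$ and $p$ are conserved along billiard trajectories. It is then a pseudodifferential operator up to $O(\lambda^{-\infty})$ errors, and composed with $1-\chi$ it contributes negligibly: its symbol lives where $x\in\mathrm{supp}\,\psi$, while $1-\chi$ vanishes there. For the genuine sharp projector at small $\delta$, I would instead use the joint eigenbasis in $(\sqrt{-\Delta},-i\partial_\theta)$. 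The modes $J_n(j_{n,k}r)e^{in\theta}$ with $|n|\geq(1-\eps/4)j_{n,k}$ are exponentially small on $r\leq 1-\eps/2$, since that region lies inside their caustic $r=n/j_{n,k}$. Hence $\psi P_{\lambda,\delta}$ restricted to those modes has $L^2\to L^4$ norm $O(\lambda^{-\infty})$.

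Combining, I would actually split $P_{\lambda,\delta}$ according to the angular momentum $n$ rather than use a pseudodifferential $\chi$. The part with $|n|\leq(1-\eps/4)\lambda$ is handled by the proof of Theorem \ref{main}, since that cutoff is the joint-spectrum analogue of $\chi$. The remainder is negligible by the Bessel decay below the caustic. Each piece is bounded by $\lambda^{1/8}\delta^{1/8}\ln(\lambda)^{1/4}$, which gives the stated bound.
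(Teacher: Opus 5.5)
Your final argument is correct, but it is not the paper's route. The paper deduces the corollary in one line from the \emph{statement} of Theorem \ref{main}. It takes $\chi(x,D)$ equal to $1$ on the cone $\{\Theta\leq(1-\eps)p\}$, which contains $T^*D(0,1-\eps)$, and tacitly identifies $1_{D(0,1-\eps)}P_{\lambda,\delta}$ with a piece of $P_{\lambda,\delta}\chi(x,D)$. It never explains why $1_{D(0,1-\eps)}P_{\lambda,\delta}(1-\chi(x,D))$ is negligible, even though $\chi$ acts on the input side and the sharp projector is not pseudodifferential.

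You instead drop $\chi$ and split the range of $P_{\lambda,\delta}$ orthogonally in the joint eigenbasis. The modes with $|n|\leq(1-\eps/4)\lambda$ are controlled on all of $\mathbb{D}^2$ by the estimate that the proof of Theorem \ref{main} actually establishes. Section \ref{secnotations} reduces precisely to modes with $\mu_{k,n}\leq 1-\eps/2$, so you run it with a smaller $\eps$; the gap between $\lambda$ and $\lambda_{k,n}$ only shifts $\mu_{k,n}$ by $O(\delta/\lambda)$. The remaining modes are $O(\lambda^{-\infty})$ in $L^\infty(D(0,1-\eps/2))$ by Lemma \ref{expdecay}: there $\mu_{k,n}-r\gtrsim\eps$, the constants $c_{k,n}$ are polynomially bounded, and there are polynomially many modes.

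Your version makes explicit the step the paper leaves implicit, and it needs no commutation of $P_{\lambda,\delta}$ with a pseudodifferential operator. The price is that you rely on the internal joint-spectrum reduction inside the proof of Theorem \ref{main}, rather than using the theorem as a black box.

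You should delete the intermediate detours, which add nothing and contain inaccuracies. First, $\psi\rho(\sqrt{-\Delta}-\lambda)$ with $\hat\rho$ supported near $0$ is a Fourier integral operator that moves covectors along the flow. It is not a pseudodifferential operator modulo $O(\lambda^{-\infty})$. Second, that operator has spectral width of order $1$, so it cannot stand in for $P_{\lambda,\delta}$ when $\delta\ll 1$. Third, rewriting $P_{\lambda,\delta}=P_{\lambda,\delta}^2$ just returns you to the term $\psi P_{\lambda,\delta}(1-\chi)$ you started with.
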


\myindent Moreover, since any eigenfunction with eigenvalue $-\lambda^2$ (i.e. $\Delta \phi_\lambda = -\lambda^2 \phi_\lambda$) is in the range of $P_{\lambda,\delta}$, we find the following corollary.
\begin{corollary}
    Let $\eps,\eta > 0$, and let $\phi_{\lambda}$ be a $L^2$ normalized eigenfunction of $\Delta$ with eigenvalue $-\lambda^2$. Then, with $\chi(x,D)$ as in Theorem \ref{main}, there holds
    \[\|\chi(x,D) \phi_\lambda\|_{L^4(\mathbb{D}^2)} \lesssim_{\eps,\eta} \lambda^{\frac{1}{12} + \eta},\]
    and thus
    \[\|\phi_\lambda\|_{L^4(D(0,1-\eps))} \lesssim_{\eps,\eta} \lambda^{\frac{1}{12} + \eta}.\]
\end{corollary}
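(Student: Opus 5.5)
The corollary follows from Theorem \ref{main} by choosing the bandwidth as small as the theorem allows. The key observation is that an eigenfunction is reproduced by the spectral projector for every bandwidth.

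Fix $\eps,\eta>0$ and let $\phi_\lambda$ be $L^2$-normalized with $\Delta\phi_\lambda=-\lambda^2\phi_\lambda$. We may assume $\lambda\geq 1$, since the finitely many low-frequency eigenfunctions with $\lambda<1$ are smooth and contribute only a constant. Since $\lambda$ lies in $[\lambda-\delta,\lambda+\delta]$ for every $\delta\geq 0$, we have $P_{\lambda,\delta}\phi_\lambda=\phi_\lambda$.

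We want to move $\chi(x,D)$ past $P_{\lambda,\delta}$. The spectral projector is a function of $\sqrt{-\Delta}$, but $\chi(x,D)$ does not commute with $\Delta$ exactly. To avoid this issue, I will apply Theorem \ref{main} to the adjoint ordering. The bound $\|P_{\lambda,\delta}\chi(x,D)\|_{L^2\to L^4}$ dualizes to a bound on $\|\chi(x,D)^*P_{\lambda,\delta}\|_{L^{4/3}\to L^2}$. That is not directly what we need, so instead I would use the symmetric ($TT^*$-type) structure of the proof of Theorem \ref{main}: the argument there decomposes on the joint eigenbasis and microlocalizes on the cone. It therefore applies equally to $\chi(x,D)P_{\lambda,\delta}$, because $\chi$ can be taken to commute with rotations up to smoothing errors and $P_{\lambda,\delta}$ is diagonal in the basis of joint eigenfunctions.

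Granting the bound for $\chi(x,D)P_{\lambda,\delta}$, we get
\[
\|\chi(x,D)\phi_\lambda\|_{L^4}=\|\chi(x,D)P_{\lambda,\delta}\phi_\lambda\|_{L^4}\lesssim_{\eps,\eta}\lambda^{1/8}\delta^{1/8}\ln(\lambda)^{1/4}
\]
for every $\delta\geq\lambda^{-1/3+\eta'}$, where $\eta'=8\eta/3$. Choosing $\delta=\lambda^{-1/3+\eta'}$ gives
\[
\lambda^{1/8}\delta^{1/8}=\lambda^{1/8-1/24+\eta'/8}=\lambda^{1/12+\eta/3}.
\]
The logarithm $\ln(\lambda)^{1/4}$ is absorbed into $\lambda^{2\eta/3}$, which gives the bound $\lambda^{1/12+\eta}$.

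For the second estimate, take $\chi(x,D)$ equal to $1$ on the smaller cone $\{\Theta\leq(1-\eps)p\}$, which contains $T^*D(0,1-\eps)$. Then $\phi_\lambda-\chi(x,D)\phi_\lambda$ restricted to $D(0,1-\eps)$ is $O(\lambda^{-\infty})$ in $L^4$. This follows from the wavefront set calculus, with Sobolev embedding converting the gain into $L^4$ control, and it yields $\|\phi_\lambda\|_{L^4(D(0,1-\eps))}\lesssim\lambda^{1/12+\eta}$.

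The only nontrivial step is the ordering issue, that is, whether the bound holds for $\chi P_{\lambda,\delta}$ rather than $P_{\lambda,\delta}\chi$. I expect the proof of Theorem \ref{main} to be symmetric in this respect. If it is not, I would instead choose $\chi$ to be a radial Fourier multiplier in the joint eigenbasis, namely a function of $\Theta/\sqrt{-\Delta}$. Such a $\chi$ commutes exactly with $P_{\lambda,\delta}$, so the two orderings coincide.
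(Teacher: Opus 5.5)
Your argument is the paper's. The corollary follows from $P_{\lambda,\delta}\phi_\lambda=\phi_\lambda$ and Theorem~\ref{main} at the smallest admissible bandwidth $\delta\approx\lambda^{-1/3+\eta}$, and your exponent bookkeeping ($\lambda^{1/8}\delta^{1/8}=\lambda^{1/12+\eta'/8}$, with the logarithm absorbed into a power) is correct.

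The ordering issue you flag is real, and the paper does not address it. You cannot simply commute: for a general $\chi$ the commutator $[\sqrt{-\Delta},\chi(x,D)]$ has order zero, so $\chi\phi_\lambda$ is not concentrated in $[\lambda-\delta,\lambda+\delta]$ when $\delta\ll 1$.

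Your proposed justification does not hold for the $\chi$ of the statement, however. A general microlocalizer in the cone need not commute with rotations, even modulo smoothing operators; take for instance $\chi=\psi(x)\tilde\chi(\Theta/p)$ with $\psi$ supported in an angular sector. Your fallback, replacing $\chi$ by a joint spectral multiplier $\tilde\chi(D_\theta/\sqrt{-\Delta})$, proves the first estimate only for that particular $\chi$. That is still enough for the second estimate, via Lemma~\ref{expdecay}.

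To get the first estimate for arbitrary $\chi$, use what the paper's proof actually establishes (see the reduction in Section~\ref{secnotations}): $\|u\|_{L^4}\lesssim_\eps\lambda^{1/8}\delta^{1/8}\ln(\lambda)^{1/4}\|u\|_{L^2}$ for every $u$ in the span of the $\phi_{k,n}$ with $(k,n)\in\mathcal{Z}$. Then argue as follows.
\begin{enumerate}
\item Expand $\phi_\lambda=\sum_n b_n\phi_{k_n,n}$ in the eigenspace and split $\phi_\lambda=v+w$, where $v$ keeps the modes with $|n|\le(1-\eps/4)\lambda$.
\item The modes in $w$ are microlocalized on $\{\Theta=(n/\lambda)p\}$ with $|n|/\lambda>1-\eps/4$. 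This set is at positive conic distance from the support of $\chi$, so $\chi(x,D)w=O(\lambda^{-\infty})$.
\item For $v$, the $L^4$-boundedness of the order-zero operator $\chi(x,D)$ gives $\|\chi(x,D)v\|_{L^4}\lesssim\|v\|_{L^4}$.
\item $v$ falls under the joint-spectral bound (with $\eps/2$ in place of $\eps$) for every $\delta$, in particular for $\delta=\lambda^{-1/3+\eta'}$.
\end{enumerate}
With this step in place, the rest of your argument goes through unchanged.
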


\subsection{Extension to surfaces isometric under a $S^1$ action}

\myindent Our method easily extends to prove similar bounds in the context of compact Riemannian surfaces which are isometric under a $S^1$ group action, with the cost of microlocalizing in a conic subset of the cotangent space which is \textit{regular} enough, in the sense precised below. While we won't give a full proof of this result, since the method is similar, we will nonetheless argue in Appendix \ref{Appext} how one can reduce to the same computations than for the disk.

\myindent Let $\Sigma$ be a compact Riemannian surface, which is isometric under a $S^1$ action, whose fixed points are isolated. Let $P$ be the infinitesimal generator of the action, which is a pseudodifferential operator of order $1$, and let $\Theta : T^*\Sigma\backslash 0 \to \R$ be its principal symbol. Observe that $\Theta$ is preserved by the geodesic flow in $T^*\Sigma \backslash 0$, which is the Hamiltonian flow of the norm of cotangent vectors $p:T^*\Sigma \backslash 0 \to \R^*_+$. In particular, the Poisson bracket $\{p,\Theta\}$ vanishes, and the geodesic flow is thus \textit{completely integrable} on the open conic set $\Omega \subset T^*\Sigma \backslash 0$ where $dp,d\Theta$ are linearly independent. We assume that the complementary set of $\Omega$ has codimension at least 1, so that $\Sigma$ is \textit{completely integrable} in the sense of \cite{de1977quasi}[Section 4]. Let $M = (p,\Theta) : \Omega \backslash 0 \to \R^2 \backslash 0$ be the momentum application. For $a\in M(\Omega)$, the level sets $\Lambda_a := M^{-1}(a)$ of $M$ are compact lagrangian submanifolds of $T^*\Sigma\backslash 0$, which are diffeomorphic to $2D$ tori. We recall the fundamental theorem of \textit{action-angle coordinates} in this setting, see \cite{de1977quasi}[Theorem 4.1]

\begin{theorem}[Colin de Verdière]\label{thmCdV}
    For all $a \in M(\Omega)$, there exists a conic convex neighborhood $U\subset \R^2\backslash 0$ of $a$, and an open connected cone $C \subset \R^2\backslash 0$, and a canonical transformation $\psi$ from $\mathbb{T}^2 \times C \subset T^*\mathbb{T}^2 \backslash 0$ to $M^{-1}(U)$ such that 
    
    i. $\psi^{-1}$ sends the foliation $(\Lambda_a)_{a\in U}$ to the foliation $(T_\xi)_{\xi\in C}$, with $T_\xi = \mathbb{T}^2 \times \{\xi\}$.
    
    ii. There holds $p \circ \psi(x,\xi) = K(\xi)$, where $K$ is a smooth homogeneous function of order 1 on $C$.
\end{theorem}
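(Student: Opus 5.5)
The plan is to prove Theorem \ref{thmCdV} (the action--angle theorem of Colin de Verdière) by building action--angle coordinates for the two commuting Hamiltonians $p$ and $\Theta$, following the Liouville--Arnold construction, and then checking that the construction respects the conic structure. Fix $a\in M(\Omega)$. Since $dp,d\Theta$ are independent on $\Omega$, the map $M$ is a submersion there. The fibre $\Lambda_a$ is compact, since $p$ is fixed and $\Sigma$ is compact, and $M$ is proper over a small neighborhood $U$ of $a$. By Ehresmann's theorem, $M^{-1}(U)\to U$ is then a locally trivial fibration.

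On each fibre, the Hamiltonian vector fields $H_p$ and $H_\Theta$ commute, because $\{p,\Theta\}=0$, and they are linearly independent and tangent to the fibre. Their joint flow therefore gives a locally free, transitive $\R^2$-action on each connected fibre. The stabilizer is a lattice $L_a\subset\R^2$, so $\Lambda_a\simeq\R^2/L_a$ is a torus. Note that $H_\Theta$ already has period $2\pi$, since it generates the $S^1$ action. I would choose a smooth basis of the period lattice over $U$, which is possible after shrinking $U$ to be simply connected.

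The actions are defined by $I_j(b)=\frac{1}{2\pi}\int_{\gamma_j(b)}\xi\,dx$, where $\gamma_1(b),\gamma_2(b)$ are the basis cycles of $\Lambda_b$. This is well defined because $\Lambda_b$ is Lagrangian, so the integral depends only on the homology class. The map $b\mapsto I(b)$ is a local diffeomorphism: its differential is, up to a factor, the inverse of the period matrix, which is invertible. To obtain the angles, I would pick a Lagrangian section of the fibration and use a generating function $S(x,I)=\int^x\xi\,dx$ along each $\Lambda_{b(I)}$. The function $S$ is multivalued, with increments $2\pi I_j$ along $\gamma_j$. Setting $\varphi=\partial_I S$ gives angles defined modulo $2\pi\Z^2$, and the map $\psi^{-1}:(x,\xi)\mapsto(\varphi,I)$ is a symplectomorphism onto $\T^2\times C$, with $C=I(U)$. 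By construction $\psi^{-1}$ sends $\Lambda_b$ to $T_{I(b)}$, which is point i. Since $p$ is constant on fibres, $p\circ\psi=K(I)$, which gives point ii.

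Conicity is the remaining point. Let $\mu_t$ denote fibrewise dilation by $t>0$. Both $p$ and $\Theta$ are homogeneous of degree $1$, so $\mu_t(\Lambda_b)=\Lambda_{tb}$ and $\mu_t^*(\xi\,dx)=t\,\xi\,dx$. It follows that $I(tb)=tI(b)$, so $C$ is a cone when $U$ is, and $K$ is homogeneous of order $1$. One may take $U$ to be a small convex cone around $a$, since properness and the lattice choice are dilation invariant. The section should be chosen conic (dilation-invariant), so that $\varphi$ is homogeneous of degree $0$ and $\psi$ commutes with dilations.

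The main obstacle is making the Lagrangian section and the global angle construction consistent with homogeneity, namely showing that $\psi$ is a homogeneous canonical transformation. I would handle this by working on the cosphere level $p=1$ and then extending by dilation. Alternatively, one can correct the angles by $\partial_I F$ for a suitable $F$, using that the obstruction to a Lagrangian section is a closed $1$-form on the simply connected set $U$, and is therefore exact.
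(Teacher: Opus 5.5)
The paper does not prove this statement; it quotes it from Colin de Verdière's work on quasi-modes (Theorem 4.1 there). Your sketch follows the standard Liouville--Arnold route to that result, and it is sound in outline:
\begin{itemize}
\item the $\R^2$-action generated by $H_p,H_\Theta$ and its period lattice;
\item actions defined as periods of $\xi\,dx$;
\item angles defined from a section;
\item the homogeneity bookkeeping $\mu_t(\Lambda_b)=\Lambda_{tb}$ and $\mu_t^*(\xi\,dx)=t\,\xi\,dx$, which give $I(tb)=tI(b)$ and hence $K$ homogeneous of degree $1$.
\end{itemize}
Compared with the bare citation, it has the merit of showing which hypotheses are really used. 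A few steps need to be stated more carefully.

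First, compactness of $\Lambda_a$ does not follow from $p$ being fixed and $\Sigma$ being compact. That only makes the full level set $\{p=a_1,\Theta=a_2\}$ compact. The set $\Lambda_a=M^{-1}(a)\cap\Omega$ is compact only if this level set avoids the critical set. For example, on a peanut-shaped sphere of revolution, take the level $\{p=1,\ \Theta=r_0\}$ with $r_0$ the radius of the neck. Then $a\in M(\Omega)$, but the regular part of this level is a union of homoclinic cylinders, and no action--angle chart exists around it. So you must invoke the paper's standing assumption that the $\Lambda_a$ are compact connected tori. Connectedness is also what makes $\psi$ onto $M^{-1}(U)$.

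Second, a local diffeomorphism $I$ need not be injective on a whole cone. Here it is injective after thinning $U$ angularly: $dI$ is homogeneous of degree $0$ and $dI(b)\,b=I(b)$, so $I$ induces a local diffeomorphism on directions. A harmless slip: $dI$ is $(2\pi)^{-1}$ times the period matrix, not its inverse.

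Third, the statement only asks for $K$ to be homogeneous, so homogeneity of $\psi$ is a bonus. If you want it, there is a cleaner device than the ones you propose.
\begin{itemize}
\item Pick $m_0\in\Lambda_a$ with $d\pi|_{T_{m_0}\Lambda_a}$ invertible, and set $q_0=\pi(m_0)$.
\item Since $\ker dM(m_0)\cap T_{m_0}(T^*_{q_0}\Sigma)=\ker d\pi(m_0)=0$, the restriction of $M$ to $T^*_{q_0}\Sigma$ has a local inverse $\sigma$ on a thin cone around $a$.
\item The image of $\sigma$ is an open piece of a cotangent fibre, so it is Lagrangian, and it is automatically conic.
\item Set $\psi(\varphi,I)=\exp(\varphi_1H_{I_1}+\varphi_2H_{I_2})(\sigma(b(I)))$, built from the $2\pi$-periodic flows of $I_1,I_2$. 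It is canonical, and it commutes with dilations because Hamiltonian flows of degree-one homogeneous functions do.
\end{itemize}
This also avoids writing $S(x,I)$ and $\varphi=\partial_I S$ at fixed $x$, which are undefined at the caustics these tori actually have.

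Your last sentence mixes two pictures. In the generating-function picture, changing the base point only adds $\partial_I F$ to the angles and never affects canonicity. In the section picture, the obstruction to a section being Lagrangian is the $2$-form $\sigma^*\omega$ on the base, not a closed $1$-form. It is removed by translating the section along the torus action, not by adding $\partial_I F$.
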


\myindent Observe that we may fix the angular momentum $\Theta = \xi_2$ as one of the action coordinates, since its Hamiltonian flow is already $2\pi$-periodic. We will always make this choice in the following.

\myindent Now, in general, the Lagrangian torus $\Lambda_a$ doesn't project smoothly on $\Sigma$ due to the presence of \textit{caustics}. We consider the simplest case of caustic, that is \textit{fold-type caustics}.

\begin{definition}
    Let $\Lambda \subset T^*\Sigma \backslash 0$ be a compact Lagrangian submanifold, let $\Pi : T^*M\Sigma\to \Sigma$ be the canonical projection, and let $\pi : \Lambda \to \Sigma$ be its restriction to $\Lambda$. Following \cite{golubitsky1980stable}, we say that $p\in \Lambda$ is singular if $d\pi(p)$ has a nonzero kernel, and we define $S\subset \Lambda$ the set of singular point. We recall that the {\rm caustic set} of $\Lambda$ is defined as $\pi(S)$. 
    
   \myindent Assume that $S$ is a submanifold of $\Lambda$ of dimension $1$. We say that $p\in S$ is of {\rm fold-type} if $ker(d\pi(p))$ and $T_p S$ span together $T_p \Lambda$. Finally, we say that $\Lambda$ has {\rm fold-type singularities} if all $p\in S$ are of fold-type. 
\end{definition}

\begin{definition}\label{defregdom}
    Let $\omega \subset \Omega$ be a convex open conic domain of action-angle coordinates. In particular, let us write $\omega = \psi(\mathbb{T}^2 \times C)$, with $C$ an open convex cone in $\R^2\backslash 0$. Thus, $\omega$ is foliated by the Lagrangian tori $L_\xi := \psi(\mathbb{T}^2\times \{\xi\})$, with $\xi \in C$. For $\xi \in C$, write $S_\xi$ the set of singular points of $L_\xi$. We say that $\omega$ is a \textit{regular domain} if the following conditions are satisfied
    
    i. The curve $\{K = 1\}$ has everywhere nonzero curvature on $C$.
    
    ii. There is a finite number of directions, say $\R_+^* \xi_1,...,\R_+^* \xi_K \subset C$, such that, for all $\xi \in C \backslash \{\R_+^* \xi_1,...,\R_+^* \xi_K\}$, $\Lambda_\xi$ has fold-type singularities. 
\end{definition}

\begin{definition}\label{defadapted}
    With the notations of Definition \ref{defregdom}, if $\chi(x,D)$ is a pseudodifferential microlocalizer supported inside $\omega$, we say that $\chi$ is \textit{adapted} to $\omega$ if
    
    i. There is an open subcone $C_1 \subset \bar{C_1} \subset C$ such that $\chi = 0$ outside of $\psi(\mathbb{T}^2 \times C_1)$.

    ii. There is a constant $c > 0$ such that, for all $i = 1,...,K$, $d(S_{\xi_i}, supp(\chi(x,D)) \geq c$.
\end{definition}

\myindent The interest of the definition is that Theorem \ref{main} extends after microlocalizing on a regular $\omega$.

\begin{theorem}\label{thmext}
    Let $\omega$ be a regular domain as in Definition \ref{defregdom}. Let $\chi(x,D)$ be a pseudodifferential microlocalizer adapted to $\omega$.  Then, there holds, for any $\eta > 0$,
    \[\forall \delta \geq \lambda^{-1/3 + \eta}, \qquad \left\|P_{\lambda,\delta} \chi(x,D) \right\|_{L^2(M) \to L^4(M)} \lesssim \lambda^{1/8} \delta^{1/8} \ln(\lambda)^{1/4}.\]
\end{theorem}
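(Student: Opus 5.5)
The plan is to reduce Theorem \ref{thmext} to the computations already done for the disk, using the action-angle coordinates of Theorem \ref{thmCdV}. By a $TT^*$ argument it suffices to bound $\|P_{\lambda,\delta}\chi\chi^* P_{\lambda,\delta}\|_{L^{4/3}\to L^4}$, or, equivalently, to control $\|\sum_{k} c_k \chi e_k\|_{L^4}^2 = \|(\sum_k c_k \chi e_k)^2\|_{L^2}$ for $\ell^2$-normalized coefficients over an orthonormal family $e_k$ spanning the range of $P_{\lambda,\delta}$ restricted to $\omega$.

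\emph{Step 1 (quasimodes).} Since $P$ commutes with $\Delta$, I will decompose in the joint spectrum of $(\sqrt{-\Delta},P)$, with joint eigenfunctions $e_{n,m}$, $Pe_{n,m}=me_{n,m}$. On $\omega$, the Bohr--Sommerfeld rules of Colin de Verdière \cite{de1977quasi} give quasimodes microlocalized on the tori $L_\xi$ with $\xi\in(\Z^2+\mu/4)\cap C$ and eigenvalue $K(\xi)+O(|\xi|^{-1})$, up to an $O(\lambda^{-\infty})$ error after applying $\chi$. Away from $S_\xi$, I will write $\chi e_\xi$ by the WKB expansion as a finite sum of $\lambda^{1/2}$-normalized oscillatory terms $a_j(x,\xi)e^{i\varphi_j(x,\xi)}$, one for each sheet of $\pi$ over $x$. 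Near fold points (guaranteed by condition ii of Definition \ref{defregdom}, and kept at distance $c$ from the degenerate directions by condition ii of Definition \ref{defadapted}), I will use the Airy-type normal form $\lambda^{1/6}\mathrm{Ai}(\lambda^{2/3}\rho)e^{i\varphi}$ given by the standard fold normal form. In local coordinates $(r,\theta)$ with $\theta$ the $S^1$ angle, this reproduces exactly the Bessel asymptotics $J_m(\lambda r)e^{im\theta}$ used for the disk: the phase is $m\theta \pm \int^r \sqrt{K^{-1}}$-type radial actions, and the caustic is a circle $r=r_c(\xi)$.

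\emph{Step 2 (reduction to the disk estimates).} Squaring, $(\sum c_\xi \chi e_\xi)^2$ involves products $e_\xi e_{\xi'}$. Integration in $\theta$ forces $m+m'$ to be fixed, and the remaining radial integrals $\int a_j a'_{j'} \bar a''\bar a'''e^{i(\varphi_j+\varphi'_{j'}-\varphi''-\varphi''')}dr$ are the same nonstationary-phase integrals treated for the disk. The phase derivative in $r$ is a difference of radial momenta, and its nondegeneracy is controlled by the curvature of $\{K=1\}$ (condition i), exactly as the curvature of the circle $\{|\xi|=1\}$ was used for $\mathbb{D}^2$. This gives off-diagonal decay of the form $(1+\lambda|\xi+\xi'-\xi''-\xi'''|\cdot\text{angle})^{-N}$, plus caustic contributions bounded via the Airy profile.

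\emph{Step 3 (counting).} The bound is then reduced to counting lattice points $\xi\in\Z^2+\mu/4$ in the annulus $\{|K(\xi)-\lambda|\le\delta\}$ whose pairwise sums lie in small boxes. I expect this step to be the main obstacle. The disk proof used the explicit Weyl remainder and a convexity-based arithmetic estimate for the circle. For a general strictly convex smooth homogeneous $K$, I will invoke lattice-point bounds for dilates of smooth convex curves with nonvanishing curvature (van der Corput type, remainder $O(\lambda^{2/3})$), which give equidistribution in annuli of width $\delta\ge\lambda^{-1/3+\eta}$. I will then redo the additive-energy/convexity argument using only strict convexity of $\{K=1\}$, which is all the disk argument should use. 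Summing with Cauchy--Schwarz will give $\lambda^{1/4}\delta^{1/4}\ln\lambda^{1/2}$ for $\|P_{\lambda,\delta}\chi\|^2$, hence the claim. The part that needs care is uniformity of the WKB/Airy constants as $\xi$ ranges over $C_1$. This uniformity is supplied by compactness of $\overline{C_1}\cap\{K=1\}$ and the distance $c$ to the degenerate singular sets.
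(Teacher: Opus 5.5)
Your outline has the same skeleton as Appendix \ref{Appext}: joint eigenbasis $e^{in\theta}\Phi_{k,n}(r)$, Bohr--Sommerfeld parametrization of $\mathcal{Z}$ by $n$, the zero-sum condition from the $\theta$-integral, WKB/Airy description near folds, and $\#Z\lesssim\lambda\delta$ from the curvature of $\{K=1\}$. However, the two steps that carry the content are misplaced or missing.

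First, the nondegeneracy of the radial phase is \emph{not} controlled by the curvature of $\{K=1\}$. That curvature concerns the \emph{integrated} radial action as a function of $\nu=n/\lambda$, and it gives no lower bound on $\partial_r\Phi_{-,-,+}(r,\boldsymbol\nu)$ at a fixed $r$. What the paper uses is that, for each fixed $r$, $\partial_r\Phi_{-,-,+}(r,\boldsymbol\nu)=M_{g(r)}(\boldsymbol\nu)$ is a second-order average of $g(r)=\partial_r f(r,\cdot)$. This function is strictly convex in $\nu$ pointwise; in the general case this comes from the fold profile $f=c\rho^{3/2}$ on a small support of $\zeta$. Lemma \ref{convavg} turns this into lower bounds in terms of $s(\boldsymbol\nu)b(\boldsymbol\nu)$, and splitting $r-\nu_4$ into three zones gives $|I_{-,-,+}(\mathbf n)|\lesssim\lambda/(s(\mathbf n)b(\mathbf n))$. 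Even for $\mathbb{D}^2$, the curve whose curvature matters is not the circle $\{|\xi|=1\}$ but the action curve, the graph of $\sqrt{1-\nu^2}-\nu\arccos\nu$. Likewise, your decay in the lattice vector $\xi+\xi'-\xi''-\xi'''$ is a plane-wave heuristic that does not apply here. The radial phases $\lambda f(r,\nu_i)$ are nonlinear in $\nu$, so what integration by parts gains is $\lambda M_{g(r)}(\boldsymbol\nu)(r-\nu_4)$, which is of size $s(\mathbf n)b(\mathbf n)/\lambda$ away from the caustic.

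Second, you are missing the arithmetic input that makes this gain useful. From $\min(s,t)\geq(\lambda\delta)^{1/2}$ you only get $s(\mathbf n)b(\mathbf n)\geq\lambda\delta$, which may be much smaller than $\lambda$; then nonstationary phase gains nothing. The quadrilateral Jarnik lemma (Proposition \ref{uncertainty}) closes this gap:
\begin{itemize}
\item Quantization puts the rescaled radial actions $I(\nu_i)$ in a translate of $\frac{c}{\lambda}\Z$ up to $O(\delta\lambda^{-1})$.
\item Hence $M_I(\boldsymbol\nu)$ (the translate cancels) is a multiple of $c/\lambda$ up to $O(\delta\lambda^{-1})$. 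By Lemma \ref{convavg} and the curvature of $\{K=1\}$, it is also $\simeq s(\boldsymbol\nu)b(\boldsymbol\nu)$. This, together with $\#Z\lesssim\lambda\delta$, is where condition i is actually used.
\item Once $s(\mathbf n)b(\mathbf n)\geq C\lambda\delta$ the multiple is nonzero, so $s(\mathbf n)b(\mathbf n)\gtrsim\lambda$.
\item Its spacing corollary and the clustering argument at fixed $(n_2,n_3)$ give $\sum_s\lambda^2s^{-2}(2s+t)^{-2}\lesssim\max(\lambda\delta/t,1)$.
\item Cauchy--Schwarz isolating $a_{n_2}$, followed by the sum over $t$, yields \eqref{summation}.
\end{itemize}
A lattice-point count in annuli does not give this, and ``redo the additive-energy argument'' leaves it unproved.

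You also do not isolate where $\lambda^{1/2}\delta^{1/2}\ln\lambda$ actually comes from. It comes from the degenerate sets $\min(s,t)\leq(\lambda\delta)^{1/2}$, bounded using $|C_{\mathbf n}|\lesssim\ln\lambda$ (the uniform $L^4$ bound on each joint eigenfunction, Lemma \ref{l4norm}) and the count $\sum_{s\leq M}\lesssim M\ln\lambda$.

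Finally, applying $\chi$ to each $e_k$ breaks exact $S^1$-equivariance, so the $\theta$-integral no longer enforces $n_1+n_4=n_2+n_3$. The function to bound, $P_{\lambda,\delta}\chi f$, is a combination of exact joint eigenfunctions; $\chi$ only makes the coefficients outside $\mathcal Z$ negligible. The paper localizes instead with a rotation-invariant cutoff $\zeta(r)$.
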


\begin{remark}
    This theorem doesn't cover Theorem \ref{main} since it would typically only apply away from the center and the boundary of the Euclidean disk $\mathbb{D}^2$. Hence, we give a direct proof in the case of the disk in order to include the center and the boundary in our estimates.
\end{remark}

\myindent Let us conclude this section by giving two main examples of the above situation

\begin{examples}
    \begin{itemize}
        \item In the case where $\Sigma = \mathbb{S}^2$ is the two-sphere equipped with a metric of revolution, which is \textit{simple} (see \cite{de1980spectre}) and satisfies the \textit{twist hypothesis} (see \cite{bleher1994distribution}), then $\Omega$ equals the cotangent space of $\Sigma$ from which is removed the two-dimensional subsets of cotangent vectors of the equator. Moreover, $\Omega$ is itself a regular domain, since the twist hypothesis guarantees the nonzero curvature assumption, and all Lagrangian tori have fold-type singularities, except for the Lagrangian torus of vertical cotangent vectors. Thus, for any compact subset $K$ of $\Sigma$ which doesn't contain a pole or intersects the equator, Theorem \ref{thmext} gives a bound on the $L^2(\Sigma) \to L^4(K)$ norm of spectral projectors. For a justification of the above facts, one may consult \cite{chabert2025infty}, or \cite{chabert2025bounds} for a more detailed introduction.
        
        \item Let $\Sigma = \mathbb{T}^2$ be a 2D-torus equipped with a metric of revolution, which we may write as $g = (dx^1)^2 + f(x^1) (dx^2)^2$ in canonical coordinates $(x^1,x^2)$. Assume that either $f$ is constant, either that its stationary points are isolated. Then, the above theorem still applies after microlocalizing away from the vectors which are cotangent to the 1-dimensional submanifold $\{f' = 0\}$, and away from generically exceptional Lagrangian submanifolds for which the curvature of the curve $\{K =1\}$ vanishes.
    \end{itemize}
\end{examples}

\subsection{Background}\label{subsecbiblio}

\myindent The first upper bound for the $L^{\infty}$ norm of eigenfunctions on a compact Riemannian manifold was given by Hörmander in \cite{hormander1968spectral}. It was generalized by Sogge to spectral projectors and $L^p$ norms in \cite{sogge1988concerning} to the following result: let $(M,g)$ be a complete boundaryless Riemannian manifold of dimension $d$. Then, for all $\lambda \geq 1$, and all $\infty \geq p \geq 2$, there holds
\[\|P_{\lambda,1} \|_{L^2(M) \to L^p(M)} \lesssim_M \lambda^{\gamma(p)},\]
where the exponent $\gamma(p)$ is given by
\[\gamma(p) = \begin{cases} 
\frac{d-1}{2} - \frac{d}{p} \qquad p_c \leq p \leq \infty\\
\frac{d-1}{2}\left(\frac{1}{2} - \frac{1}{p}\right) \qquad 2 \leq p \leq p_C\end{cases},\]
the two behaviors being separated by the \textit{Stein-Thomas} exponent $p_C = 2\frac{d+1}{d-1}$. Moreover, this upper bound is optimal on all manifolds, in the sense that the lower bound also holds, up to changing $\lambda$ to a $\lambda_0$ satisfying $|\lambda - \lambda_0| \leq R_0$ for some fixed $R_0(M) > 0$. As a consequence, one finds the upper bound $\|\phi_\lambda\|_{L^p(M)}\lesssim \lambda^{\gamma(p)}$ for $L^2$-normalized eigenfunctions $-\Delta \phi_\lambda = \lambda^2 \phi_\lambda$, which corresponds to the case $\delta= 0$. In general, this upper bound is sharp, since it is saturated on the sphere $S^d$ by \textit{zonal harmonics} (resp \textit{highest weight harmonics}) for the \textit{point-focusing regime} $p_C \leq p \leq \infty$ (resp the \textit{geodesic-focusing regime} $2\leq p \leq p_C$). However, in general, one expects that the $L^p$ norm of eigenfunctions is much smaller (see \cite{sogge2002riemannian}). The study of the $L^2\to L^p$ norm of the spectral projector $P_{\lambda,\delta}$ on a small frequency interval can be seen as an intermediate problem between the mostly open question of $L^p$ norm of eigenfunctions, and the fully understood case $\delta =1$. 

\myindent One of the first improved $L^p$ bound result is the well-known result of Cooke-Zygmund (see \cite{cooke1971cantor,zygmund1974fourier}) that, on the 2D-torus $\T^2 := \R^2 / (2\pi \Z)^2$, any $L^2$-normalized eigenfunction of the Laplacian satisfies $\|\phi\|_{L^4(\T^2)}\lesssim 1$. It was generalized by Bourgain \cite{bourgain1993eigenfunction} to the conjecture that, on the torus $\T^d := \R^d / (2\pi \Z)^d$, there holds
\[ \|\phi_{\lambda}\|_{L^p(\mathbb{T}^d)} \lesssim \lambda^{\frac{d}{2} - 1 - \frac{d}{p}} \qquad p > \frac{2d}{d-2}.\]

\myindent Particular cases of this conjecture were proved in \cite{bourgain2013moment}, culminating with the $l^2$ decoupling theorem, see \cite{bourgain2015proof}. More generally, estimates on spectral projectors on thin frequency intervals for tori have been extensively studied in \cite{germain2022boundsBook,germain2022bounds,demeter2024l2,hickman2020uniform}.

\myindent For \textit{arithmetic surfaces}, it is conjectured in \cite{iwaniec1995norms} that there holds $\|\phi_{\lambda}\|_{L^{\infty}}\lesssim \lambda^{\eps}$ for any $\eps > 0$ (hence the same bound for $L^p$ norms). For recent progresses on that conjecture, see \cite{buttcane2017fourth,humphries2018equidistribution,humphries2022p}.

\myindent More generally, for manifolds with \textit{nonpositive curvature}, many works have obtained \textit{logarithmic} improvements on the norm of eigenfunctions, see \cite{berard1977wave,hassell2015improvement,hezari2016lp,blair2017refined,blair2018concerning,blair2019logarithmic}. 

\myindent The converse question of finding geometric assumption on $M$ under which there exists sequences of eigenfunctions with high $L^p$ norms has been studied in many works, such as \cite{sogge2002riemannian,sogge2011blowup,sogge2016focal,sogge2016focalb,sogge2001riemannian,canzani2019growth,canzani2021eigenfunction}.

\myindent Finally, we mention that, for the $L^p$ norm of eigenfunctions of the \textit{magnetic Laplacian} on compact hyperbolic surface, one observes three of the main behaviors, depending on energy levels \cite{chabert2026zonalstatesimprovedlinfty}: for low energy levels, the situation is similar to the sphere, that is Hörmander's bound is saturated. For the critical energy level, a polynomial improvement holds. Finally, for high energy levels, the situation is similar to manifolds with nonpositive curvature.

\quad

\myindent In the case of compact manifolds with a boundary, with Dirichlet or Neumann boundary condition, it was observed by Grieser \cite{grieser1992p} that, on the Euclidean disk, the $L^2$-normalized \textit{whispering modes}, which are eigenfunctions that concentrates on a $\lambda^{-2/3}$-neighborhood of the boundary, have $L^6$ norm $\lambda^{2/9}$, where $2/9 > \gamma(6) = 1/6$. More generally, it was proved in \cite{smith2007p} that, when $M$ is a compact surface with boundary, there holds 
\[\|P_{\lambda,1} \|_{L^2(M)\to L^p(M)} \lesssim_M \lambda^{\mu(p)},\]
where 
\[\mu(p) = \begin{cases}
2\left(\frac{1}{2} - \frac{1}{q}\right) - \frac{1}{2} \qquad 8 \leq p \leq \infty\\
\frac{2}{3}\left(\frac{1}{2} - \frac{1}{q} \right) \qquad 2 \leq p \leq 8\end{cases},\]
and the upper bound is saturated for the Euclidean disk. In particular, this explains why it is necessary to exclude a small neighborhood of $\partial\mathbb{D}^2$ in Theorem \ref{main}. However, since Sogge's bound is purely local, the bound for boundaryless compact manifolds still holds on any compact set $K\subset Int(\mathbb{D}^2)$. This is why the exponent $1/8$ in Theorem \ref{main} corresponds to $\gamma(4)$ and not to $\mu(4) = 1/6$. We intend to study in a future work the transitional regime between $\gamma(p)$ and $\mu(p)$ for the norm of $P_{\lambda,1}$ on the disk, when microlocalizing on dyadic neighborhoods of the boundary, in the manner of \cite{Koch2005}.

\myindent Let us comment a little on the possible results for any values of $p,\delta$. First, Colin de Verdière proved in \cite{colin2010remainder} a polynomial improvement for the remainder of the Weyl law for the Euclidean disk: if $0\leq\lambda_0^2 \leq \lambda_1^2 \leq ...$ are the ordered eigenvalues of $-\Delta$, then, for $\lambda \geq 0$,
\[N(\lambda) := \#\{j\geq 0 \ \text{such that} \ \lambda_j \leq \lambda\} = c\lambda^2 + O(\lambda^{2/3}),\]
which is a strong indication that one can obtain improved upper bounds for spectral projectors on polynomially thin frequency intervals. Indeed, this ensures that
\begin{equation}\label{improvedremainder}
    \forall \delta \geq \lambda^{-1/3}, \qquad \#\{j\geq 0 \ \text{such that} \ \lambda_j \in [\lambda - \delta, \lambda + \delta]\} \lesssim \lambda \delta.
\end{equation}

\myindent Now, let us observe that, for any point $x_0 = (r_0,\theta_0)$ with $0 < r_0 < 1$, the set \[\{(x,\xi) \in T^*\mathbb{D}^2\qquad \Theta(x,\xi) \geq r_0\}\] is locally a Lagrangian submanifold around $x_0$, with a fold-type caustic on the circle of radius $r_0$. Hence, from the standard construction of $O(h^{\infty})$ quasimodes on a Lagrangian submanifold (see \cite{duistermaat1974oscillatory}), and their expression in terms of the Airy function, one can construct a $O(\lambda^{-\infty})$ quasimode around $x_0$, which is of size $\lambda^{1/6}$ on an annulus around the circle of radius $r_0$, and of width $\lambda^{-2/3}$. In fact, from the explicit expression of eigenfunctions in terms of Bessel functions (see below), if $r_0$ belongs to the dense subset of $[0,1]$ formed by the $\mu_{k,n}$ in Definition \ref{defJn}, this is an exact eigenfunction. Now, from this construction, we find that, for any $N\geq 0$, and for any $K\subset \mathbb{D}^2$ of nonempty interior,
\[\forall p > 4 \qquad \|P_{\lambda,\lambda^{-N}}\|_{L^2(\mathbb{D})^2 \to L^p(K)} \gtrsim \lambda^{\frac{1}{6} - \frac{2}{3p}},\]
which is better than the a priori universal lower bound $\lambda^{\gamma(p)} \delta^{1/2}$ for $P_{\lambda,\delta}$ in the regime $\delta \leq \lambda^{-\left(\frac{2}{3} - \frac{8}{3p}\right)}$.

\myindent The case $p = 4$ is singular in the sense that, as can be seen from direct computation, the above quasimodes (and the exact eigenfunctions given by Bessel functions) have $L^4$ norm of order $\ln(\lambda)$. In particular, the Cooke-Zygmund estimate doesn't hold on the disk, and we see a major difference in the geometry of the disk, compared to the flat torus, given by the presence of \textit{caustics}. Hence, we start by studying the case $p = 4$ in the present article, and we will study the critical exponent $p = 6$ in a future work.

\myindent In the \textit{point-focusing regime} $p > 6$, the author obtained with Colin de Verdière improved upper bound on polynomially small frequency intervals in the closely related \cite{chabert2025infty} for $p = \infty$ (hence for any $p > 6$ by interpolation). In particular, the method of proof of the present article is directly inspired by this paper.

\quad

\myindent Finally, the Euclidean disk is an example of a manifold invariant under an $S^1$ isometric action, as was studied in \cite{donnelly2001bounds}. In particular, it falls in the more general class of \textit{completely integrable} manifolds, for which Bourgain predicted that polynomial improvement on the $L^{\infty}$ norm of eigenfunctions holds in \cite{bourgain1993eigenfunctiona}. We mention that we were able to quantify this phenomenon in terms of spectral projectors for some surfaces of revolution in \cite{chabert2025bounds}.

\subsection{Outline of the proof}\label{subsecidea}

\myindent Following the approach of \cite{chabert2025infty}, we start in Section \ref{secnotations} by decomposing on the joint eigenbasis of $\sqrt{-\Delta}$ and the generator of infinitesimal rotation $\frac{1}{i}\frac{\partial}{\partial \theta}$, which gives eigenfunctions of the form
\[\phi_{k,n}(r,\theta) = c_{k,n} \lambda_{k,n}^{1/2} J_n(\lambda_{k,n} r) e^{in\theta},\]
where $J_n$ is the $n$-th Bessel function, for integers $k,n$, constants $c_{k,n} \simeq 1$, and where $\lambda_{k,n}$ is the eigenvalue for $\sqrt{-\Delta}$. Thus, all amounts to estimating the coefficients $C_{\textbf{n}}$ defined as the integral of the products of four such eigenfunctions indexed by quadruplets $\textbf{n} = (n_1,n_2,n_3,n_4)$ (the condition that $\lambda_{k,n} \in [\lambda-\delta,\lambda +\delta]$ fixes $k = k(n)$). Here, we crucially use the explicit expression of the $L^4$ norm (which holds for any $L^p$ norm with $p$ an even integer) in terms of products.

\myindent In Section \ref{secreduc}, we first explain how the integral of the product vanishes outside of a simple \textit{zero sum set condition} on the quadruplet $\textbf{n}$, which must satisfy $n_1 + n_4 = n_2 + n_3$. In particular, this may be seen as a reduction of the "dimension" of the set of quadruplets $\textbf{n}$, i.e. one has at most three free parameters $n$. Then, we use the explicit form of Bessel eigenfunctions in terms of Lagrangian oscillatory integrals, for which we give precise quantitative estimates in Appendix \ref{AppBessel}, in order to reduce the problem to a number of oscillatory integrals estimate, with a large parameter $\lambda$. The interest is that we not only exploit the decay of joint eigenfunctions outside of caustics, but also their oscillatory nature with increasing frequency the furthest away from the caustic. The idea is then that, for "most" value of the quadruplet $\textbf{n}$, and outside of very small neighborhoods of their caustics, the eigenfunctions are well approximated in terms of simple oscillatory functions, but with different frequencies for each. The point is that, when integrating the product of oscillating functions with different frequencies, one gains averaging effects through integration by parts.

\myindent In Section \ref{secmain}, we bound the main contribution in the integral of the product of four eigenfunctions $C_\textbf{n}$. Indeed, since our analysis relies mostly on careful and quantitative integration by parts, all is governed by lower bounds on the gradient of the phases which appear in the oscillatory integrals. Now, the most delicate case for proving lower bounds on this gradient is given by a certain \textit{convexity} condition, where even the fact that the gradient doesn't vanish relies crucially on a convexity property of the oscillatory frequency of Bessel functions away from their caustic, which is the equivalent of the fact that the Airy functions oscillates as a $\cos(|x|^{3/2})$ when $x\to -\infty$.

\myindent Then, one needs to sum the bound obtained for a fixed quadruplet $\textbf{n}$ on all quadruplets satisfying the zero sum set condition. This relies on a new arithmetic estimate on the distribution of points which are both in a small neighborhood of a strictly convex curve, and on the rescaled lattice  $\lambda^{-1}\Z^2$ This estimate can be seen as an extension of the classical \textit{Jarnik's estimate} to \textit{quadrilaterals} with vertices satisfying a certain resonant condition.

\myindent Finally, in Section \ref{secremaind}, we prove that all of the remaining terms are indeed small compared to the main order terms.

\section{Notations and first reductions}\label{secnotations}

\myindent We start by introducing the explicit basis of joint diagonalization of the Laplacian on $\mathbb{D}^2$ and of the generator of the rotational symmetry $\frac{1}{i} \frac{\partial}{\partial \theta}$ given in terms of Bessel function. 

\begin{definition}
    For $n \in \Z$, we introduce the Bessel function
    \begin{equation}\label{defJn}
        J_n(t) := \frac{1}{2\pi} \int_{\mathbb{T}} e^{i(n\alpha - t\sin \alpha)} d\alpha.
    \end{equation}
    
    \myindent If, for $k\geq 0$, we denote $\lambda_{k,n} > 0$ the $k$th zero of $J_n$, we recall that an $L^2$ orthonormal basis of eigenfunctions of the Laplacian with Dirichlet boundary condition on the disk is given, in polar coordinates, by
    \[\phi_{k,n}(r,\theta) = c_{k,n} \lambda_{k,n}^{1/2} J_n(\lambda_{k,n} r) e^{in\theta},\]
    where $c_{k,n}$ are positive normalization constants, and where the associated eigenvalue of $\Delta$ is $-\lambda_{k,n}^2$. Moreover, there holds $c_n \simeq (1 - \mu_{k,n})^{-1/4}$, where $\mu_{k,n} := \frac{|n|}{\lambda_{k,n}}$ is the radius at which the caustic set of $\phi_{k,n}$ is localized (see \cite{chabert2025infty}).
\end{definition}

\myindent Now, since $J_{-n}(t) = (-1)^nJ_n(t)$, we can restrict ourselves in the following to $n\geq 0$. Since we localize on the support of $\chi(x,D)$, we can furthermore restrict to $\mu_{k,n} \leq 1 - \eps/2$ (see \cite{chabert2025infty}). In particular, we can reduce the problem to the following: let
\[\mathcal{Z} := \{k,n\geq0 \qquad\ \text{such that}\ \lambda_{k,n} \in [\lambda - \delta, \lambda + \delta] \ \text{and} \ n \leq (1- \eps/2) \lambda_{k,n}\}.\]

\myindent Then, fix
\[u := \sum_{(k,n) \in \mathcal{Z}} a_{k,n} \phi_{k,n} \qquad \text{such that} \ \sum |a_{k,n}|^2 = 1.\]

\myindent We need only prove that
\[\|u\|_{L^4(\mathbb{D}^2)}^4 \lesssim_{\eps}   \lambda^{1/2} \delta^{1/2} \ln(\lambda).\]

\myindent In the following, using that, for all $n$, there is at most one $k_n \geq 0$ such that $(k_n,n)\in \mathcal{Z}$ (see \cite{chabert2025infty}), we drop the index $k$ in the notations, and, in particular, we write $Z$ the projection on $\mathcal{Z}$ on the second coordinate.

\section{Reduction to a problem of oscillatory integrals}\label{secreduc}

\myindent In this section, we present how to further reduce the problem to a number of quantitative estimates of coefficient given by explicit integrals in terms of Bessel functions. They indexed by a particular type of 4-uplets of $Z^4$ satisfying a crucial \textit{resonant condition}, which we introduce in Section \ref{subsectrapezes}. Then, we decompose the coefficient with respect to different regimes, depending on the local behavior of the Bessel functions (exponentially small, large and non oscillatory, of order 1 and oscillatory) in Section \ref{subsecdefosc}. Finally, in the \textit{oscillatory} regimes, we reduce the estimates of the coefficient to quantitative estimates on explicit oscillatory integrals with a large parameter $\lambda$.

\subsection{Integrals indexed by zero sum sets}\label{subsectrapezes}

\myindent Observe that
\[\|u\|_{L^4(\mathbb{D}^2)}^4 = \sum_{n_1,n_2,n_3,n_4 \in Z} a_{n_1} \overline{a_{n_2} a_{n_3}} a_{n_4} \left(\Pi_{i=1}^4 c_{n_i} \right) \times \int_0^1 r dr   \Pi_{i=1}^4 \lambda_{n_i}^{1/2} J_{n_i}(\lambda_{n_i} r) \int_{\mathbb{T}} e^{i(n_1 - n_2 - n_3 + n_4) \theta } d\theta,\]

where $\mathbb{T} := \R/(2\pi \Z)$, the constants $c_{n_i}$ are all of order $1$, and $\lambda_{n_i} = \lambda + O(\delta)$. In particular, it is natural to introduce the following coefficients, which govern the analysis.

\begin{definition}
    For $\textbf{n} = (n_1,n_2,n_3,n_4)\in Z^4$, we introduce
    \[\begin{split}
        C_{\textbf{n}} &:= \frac{1}{2\pi \Pi_{i=1}^4 c_{n_i}} \int_{\mathbb{D}^2} dx  \phi_{n_1}(x) \overline{\phi_{n_2}(x) \phi_{n_3}(x)} \phi_{n_4}(x) \\
        &=\int_0^1 r dr  \lambda_{n_1}^{1/2} J_{n_1} (\lambda_{n_1} r) \lambda_{n_2}^{1/2} J_{n_2} (\lambda_{n_2} r) \lambda_{n_3}^{1/2} J_{n_3} (\lambda_{n_3} r) \lambda_{n_4}^{1/2} J_{n_4} (\lambda_{n_4} r) \fint_{\mathbb{T}} e^{i(n_1 - n_2 - n_3 + n_4) \theta} d\theta.
    \end{split}\]
\end{definition}

\myindent Now, we see that the $\theta$ integral vanishes whenever $n_1 - n_2 - n_3 + n_4 \neq 0$, and equals $1$ when $n_1 - n_2 - n_3 + n_4 = 0$. This can be seen a \textit{resonance condition} on the quadruplet $\textbf{n}$, made explicit thanks to the separation of variables. In the following, we may restrict the sum to those $\textbf{n}$ such that $n_1 + n_4 = n_2 + n_3$, which we call \textit{zero sum sets}. In the following, we will only use estimates on $|C_{\textbf{n}}|$, hence we may without loss of generality assume moreover that 
\[n_1 \leq n_2 \leq n_3 \leq n_4.\]

\myindent Indeed, if we introduce the set of zero sum sets (the reason for the notation $T$, for \textit{trapeze}, will be introduced in Section \ref{subsecuncertainty})
\[T := \{(n_1,n_2,n_3,n_4) \in Z^4 \qquad n_1 \leq n_2 \leq n_3 \leq n_4, \ n_1 + n_4 = n_2 + n_3\}, \]
we will prove that
\[\sum_{\textbf{n} \in T} |a_{n_1}||a_{n_2}| |a_{n_3}||a_{n_4}| |C_{\textbf{n}}| \lesssim_\eps \lambda^{1/2} \delta^{1/2}\ln(\lambda).\]

\myindent Finally, for $\textbf{n} \in T$, we define its \textit{side}, \textit{top} and \textit{base} by
\[\begin{split}
    &s(\textbf{n}) := n_2 - n_1 = n_4 - n_3 \\
    &t(\textbf{n}) := n_3 - n_2 \\
    &b(\textbf{n}) := n_4 - n_1
\end{split}\]

\subsection{Definition of oscillatory and non oscillatory regimes and enunciation of the bounds}\label{subsecdefosc}

\myindent From standard BKW analysis, the function $\lambda_n^{1/2} J_n(\lambda_n r)$ is oscillatory in the classically allowed region $r > \mu_n$, and exponentially decaying in the classically forbidden region $r < \mu_n$, whereas its behavior near the caustic is given by the Airy function. More precisely, for $\mu_n$ away from zero, one expects that (see \cite{guillemin1973remarks})

\[\lambda_n^{1/2} J_n(\lambda_n r) \simeq \lambda^{1/6} Ai(-\lambda_n^{2/3} \rho(r)),\]
with $\rho(r) \simeq r - \mu_n$. We thus see that, for $n\in Z$, and $r\in [0,1]$, there are essentially three different regimes for $\lambda_n^{1/2} J_n(\lambda_n r)$:

\myindent i. If $r$ is sufficiently smaller than $\mu_n$, then it is exponentially small.

\myindent ii. If $r$ is close to $\mu_n$, it is of size $\lambda_n^{1/6}$ and not very oscillatory.

\myindent iii. If $r$ is sufficiently larger than $\mu_n$, it is of size $1$ and it oscillates.

\myindent Moreover, those three behavior are well separated if we choose the $r-\mu_n$ thresholds for this trisection at $r-\mu_n = -\lambda^{-2/3 + \kappa}$ and $r- \mu_n = \lambda^{-2/3 + \kappa}$ for some $\kappa > 0$. Now, we insist that this is only a heuristic, since, for example, when $\mu_n$ is very close to zero, $\lambda_n^{1/2} J_n(\lambda_n \mu_n)$ is much larger than $\lambda_n^{1/6}$ (indeed, $\lambda^{1/2}J_0(0)$ is of order $\lambda^{1/2}$). However, we may quantify the three different behaviors. Regarding the exponential decay below the caustic point, we prove superpolynomial decay in Lemma \ref{expdecay}. Near the caustic, the main consequence of the Airy approximation is that, upon differentiating $k$ times $r\mapsto\lambda_n^{1/2} J_n(\lambda_n r)$, if $|r-\mu_n|$ is small enough, one loses only a factor $\lambda^{2/3 k}$ instead of a factor $\lambda_n^k$ as a naive differentiation seems to indicate. We quantify this phenomenon in Lemma \ref{derlemm}. Finally, we give a very precise estimate of the oscillatory behavior above the caustic in Lemma \ref{Lemmstatphase}.

\myindent This motivates the quantitative trisection of $C_{\textbf{n}}$ into three different regimes.

\begin{definition}\label{deftrisec}
    Let $\kappa > 0$. For $\textbf{n} \in T$, find 
    \[1 = \eta_{n_4}^{int}(r) + \eta_{n_4}^{caus}(r) + \eta_{n_4}^{ext}(r)\]
    a partition of unity of $[0,1]$ by smooth localizers $\eta_{n_4}^{\bullet}(r)$ (i.e. $0 \leq \eta_{n_4}^{\bullet}(r) \leq 1$) such that $\eta_{n_4}^{int}$ vanishes for $r \geq \mu_{n_4} - \frac{1}{2}\lambda^{-2/3 + \kappa}$, $\eta_{n_4}^{caus}$ vanishes for $r\notin (\mu_{n_4} \pm \lambda^{-2/3 + \kappa})$, and $\eta_{n_4}^{ext}$ vanishes when $r \leq \mu_{n_4} + \frac{1}{2}\lambda^{-2/3 + \kappa}$. Finally, we assume that 
    \[\left|\frac{d^k}{dr^k} \eta_{n_4}^{\bullet}(r)\right| \lesssim_k \lambda^{k(2/3 - \kappa)},\]
    where we stress that the implicit constant is independent of $\textbf{n}$.
    
    We define the trisection
    \[C_{\textbf{n}} = C_\textbf{n}^{int} + C_\textbf{n}^{caus} + C_\textbf{n}^{ext},\]
    where
    \[C_\textbf{n}^{\bullet} = \int_0^1 r dr \eta_{n_4}^{\bullet}(r)  \lambda_{n_1}^{1/2} J_{n_1} (\lambda_{n_1} r) \lambda_{n_2}^{1/2} J_{n_2} (\lambda_{n_2} r) \lambda_{n_3}^{1/2} J_{n_3} (\lambda_{n_3} r) \lambda_{n_4}^{1/2} J_{n_4} (\lambda_{n_4} r).\]
\end{definition}

\myindent Now, in the cases when two or more of the caustics $(\mu_{n_i})$ are too close, then some additional resonances and cancellations may appear in the oscillations of the $\lambda_{n_i}^{1/2} J_{n_i}(\lambda_{n_i} r)$, leading to exceptional growth (think of the integral of a $\cos^2(\lambda r)$). Hence, we filter moreover the set of zero sum sets to remove these types of phenomena. In particular, we again discriminate by comparing the distance between caustics to $\lambda^{-2/3}$, which is equivalent to comparing the distance between the $(n_i)$ to $\lambda^{1/3}$. Similarly to the definition of the trisection (Definition \ref{deftrisec}), we allow a $\lambda^{\alpha}$ loss, for some small $\alpha > 0$.

\begin{definition}
    Let $\alpha > 0$. We define the set of $\alpha$-\textit{degenerate} zero sum sets by
    \[T_{\alpha} := \{\textbf{n} \in T \qquad \text{such that}\ \min(s(\textbf{n}), t(\textbf{n})) \leq \lambda^{1/3 + \alpha}\}.\]
\end{definition}

\myindent We now introduce the general bounds which we will prove in the following.

\begin{proposition}\label{defregimes}
    Let $\alpha > 0$, and let $\kappa > 0$ such that $\kappa < \alpha/100$. Then, there holds
    
    \myindent i. For $\alpha$-degenerate zero sum sets,
    \[\sum_{\textbf{n} \in T_{\alpha}} |a_{n_1}||a_{n_2}| |a_{n_3}||a_{n_4}| |C_\textbf{n}| \lesssim_\eps \lambda^{1/3 + \alpha} \ln(\lambda).\]
    
    \myindent ii. More generally, for zero sum sets such that $\min(s(\textbf{n}), t(\textbf{n})) \leq \lambda^{1/2} \delta^{1/2}$, there holds
    \[\sum_{\min(s(\textbf{n}), t(\textbf{n})) \leq \lambda^{1/2} \delta^{1/2}} |a_{n_1}||a_{n_2}| |a_{n_3}||a_{n_4}| |C_\textbf{n}| \lesssim_\eps \lambda^{1/2}\delta^{1/2} \ln(\lambda).\]
    
    \myindent iii. For the interior estimate,
    \[\sum_{\textbf{n} \in T} |a_{n_1}||a_{n_2}| |a_{n_3}||a_{n_4}| |C_\textbf{n}^{int}| \lesssim_{\eps,\kappa,N} \lambda^{-N}.\]
    
    \myindent iv. For the caustic estimate, 
    \[\sum_{\textbf{n} \in T\backslash T_\alpha} |a_{n_1}||a_{n_2}| |a_{n_3}||a_{n_4}| |C_\textbf{n}^{caus}| \lesssim_{\eps,\kappa,N} \lambda^{-N}.\]
    
    \myindent v. Finally, for the exterior estimate, there holds
    \[\sum_{\textbf{n} \in T\backslash T_\alpha, \min(s(\textbf{n}), t(\textbf{n})) \geq \lambda^{1/2}\delta^{1/2}} |a_{n_1}||a_{n_2}| |a_{n_3}||a_{n_4}| |C_\textbf{n}^{ext}| \lesssim_{\eps, \kappa} \lambda^{1/2} \delta^{1/2}\ln(\lambda)^{1/2}.\]
\end{proposition}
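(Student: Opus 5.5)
The plan is to treat the five estimates separately, using three ingredients: the pointwise behaviour of $\lambda_n^{1/2}J_n(\lambda_n r)$ in the three regimes (Lemma \ref{expdecay}, Lemma \ref{derlemm}, Lemma \ref{Lemmstatphase}), the ordering $n_1\leq n_2\leq n_3\leq n_4$, and Cauchy--Schwarz in the coefficients $a_n$. I take (iii) first, since it is the simplest. On the support of $\eta^{int}_{n_4}$ we have $r\leq \mu_{n_4}-\frac12\lambda^{-2/3+\kappa}$, so the factor $\lambda_{n_4}^{1/2}J_{n_4}(\lambda_{n_4}r)$ is $O(\lambda^{-N})$ by Lemma \ref{expdecay}. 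The other three factors are bounded by the trivial bound $\lambda^{1/2}$. Summing over $|T|\lesssim\lambda^3$ quadruplets, with $|a_n|\leq 1$, still leaves $O(\lambda^{-N})$.

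For (i) and (ii), I use a Cauchy--Schwarz argument adapted to the parametrisation of $T$ by $(n_1,s,t)$.
\begin{itemize}
\item First I need a bound on $|C_{\textbf{n}}|$ that is uniform in $\textbf{n}$. Hölder gives $|C_{\textbf{n}}|\leq\prod_i\|\lambda^{1/2}J_{n_i}(\lambda\cdot)\|_{L^4(rdr)}$. Each radial $L^4$ norm is $\lesssim_\eps \ln(\lambda)^{1/4}$: the Airy region contributes $\lambda^{4/6}\cdot\lambda^{-2/3}$, and the oscillatory region contributes $\int (r-\mu)^{-1}\,dr\sim\ln\lambda$. Hence $|C_{\textbf{n}}|\lesssim_\eps\ln\lambda$.
\item Next I restrict to, say, $s\leq S$ with $S=\lambda^{1/3+\alpha}$ for (i) and $S=\lambda^{1/2}\delta^{1/2}$ for (ii).
\item I then write the sum as $\sum_{s\leq S}\sum_{n_1,t}|a_{n_1}a_{n_1+s}|\,|a_{n_1+s+t}a_{n_1+2s+t}|$. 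For fixed $s$, the inner sum factors as $\big(\sum_m|a_ma_{m+s}|\big)^2\leq 1$. Summing over $s$ gives $S$ quadruplet-classes, hence the bound $S\ln\lambda$.
\item The case $t\leq S$ is identical: fix $t$, and pair $(n_2,n_3)$ and $(n_1,n_4)$.
\end{itemize}

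For (iv), on $\operatorname{supp}\eta^{caus}_{n_4}$ we have $|r-\mu_{n_4}|\leq\lambda^{-2/3+\kappa}$. Because $\textbf{n}\notin T_\alpha$, we have $n_3\leq n_4-\lambda^{1/3+\alpha}$, so $\mu_{n_3}\leq\mu_{n_4}-c\lambda^{-2/3+\alpha}$, and similarly for $n_1,n_2$. So the three other Bessel factors lie in their oscillatory regime at distance $\gtrsim\lambda^{-2/3+\alpha}\gg\lambda^{-2/3+\kappa}$ from their caustics. I write them via Lemma \ref{Lemmstatphase} as $\sum_\pm a_\pm(r)e^{\pm i\lambda\varphi_{n_i}(r)}$, where $\lambda\varphi_{n_i}'\sim\lambda\sqrt{r^2-\mu_{n_i}^2}/r$. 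Meanwhile, Lemma \ref{derlemm} shows that derivatives of the $n_4$ factor and of $\eta^{caus}$ cost only $\lambda^{2/3-\kappa}$ each. The total phase is $\lambda(\pm\varphi_{n_1}\pm\varphi_{n_2}\pm\varphi_{n_3})$, and I need its derivative to be $\gg\lambda^{2/3}$.

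The main obstacle is the sign combinations with partial cancellation, e.g. $\varphi_{n_1}'-\varphi_{n_2}'+\varphi_{n_3}'$. My plan here is to use the convexity of $\mu\mapsto\sqrt{r^2-\mu^2}$ together with the zero-sum relation. Note that $\mu_{n_1}+\mu_{n_4}\approx\mu_{n_2}+\mu_{n_3}$. Near $r=\mu_{n_4}$, the quantity $\sqrt{r^2-\mu_{n_1}^2}-\sqrt{r^2-\mu_{n_2}^2}+\sqrt{r^2-\mu_{n_3}^2}$ differs from the corresponding expression at $\mu_{n_4}$, where the $n_4$ term vanishes. The difference is controlled by the strict concavity gap, which is $\gtrsim$ a power of $\lambda^{-2/3+\alpha}$ times $s\cdot t/\lambda^2$. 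I expect this to give a phase derivative $\gtrsim\lambda^{2/3+c\alpha}$. Repeated integration by parts, each step gaining $\lambda^{-c\alpha+\kappa}$ since $\kappa<\alpha/100$, then yields $O(\lambda^{-N})$ uniformly, and summing over $\lambda^3$ quadruplets preserves this.

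For (v), I reduce via Lemma \ref{Lemmstatphase} to 16 oscillatory integrals with phases $\lambda\Phi_\pm(r)=\lambda\sum\pm\varphi_{n_i}(r)$. I bound each by integrating by parts, with weights $(r-\mu_{n_4})^{-1}$ from the amplitudes. This gives roughly $|C^{ext}_{\textbf{n}}|\lesssim (\lambda\min_r|\Phi'|)^{-1}$, with a logarithmic factor. For the fully resonant sign pattern $+-\!-+$, the convexity of $\mu\mapsto\sqrt{r^2-\mu^2}$ gives $|\Phi'|\gtrsim st/\lambda^2$ up to powers of $r-\mu$. So $|C^{ext}_{\textbf{n}}|\lesssim\lambda/(st)$, while non-resonant patterns are $O(\lambda^{-1})$. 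I then sum $\lambda/(st)$ over $s,t\geq\lambda^{1/2}\delta^{1/2}$, weighted by the $a$'s, using Cauchy--Schwarz as above together with the arithmetic estimate counting points of $Z$ near the convex curve (the Jarník-type bound announced in Section \ref{secmain}). The naive count gives $\lambda\sum_s s^{-1}\sum_t t^{-1}$, which is too large. The arithmetic estimate is needed to show that the effective number of $(s,t)$ configurations is only $\lambda\delta$ per scale, giving the final $\lambda^{1/2}\delta^{1/2}\ln(\lambda)^{1/2}$. I expect this step, the convexity lower bound on $\Phi'$ combined with the arithmetic counting, to be the true difficulty of (v).
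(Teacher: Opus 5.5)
Parts (i)--(iii) follow the paper. Hölder with the uniform radial bound of Lemma \ref{l4norm} gives $|C_{\mathbf n}|\lesssim\ln\lambda$; cite that lemma rather than your Airy/oscillatory heuristic, which degenerates as $\mu_n\to0$. Your two Cauchy--Schwarz pairings are exactly the paper's, and Lemma \ref{expdecay} handles the interior term.

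Part (iv) uses the paper's idea, but you attach it to the wrong sign pattern. Since $\varphi_n'=-\sqrt{r^2-\mu_n^2}/r<0$ and $|\varphi_n'|$ decreases in $n$, the combination $\varphi'_{n_1}-\varphi'_{n_2}+\varphi'_{n_3}$ has modulus at least $|\varphi'_{n_3}|\gtrsim\lambda^{-1/3+\alpha/2}$ by monotonicity alone. The only delicate pattern is $\varphi'_{n_1}-\varphi'_{n_2}-\varphi'_{n_3}$. Freezing $r$ at $\nu_4$ (error $O(\lambda^{-1/3+\kappa-\alpha/2})$), this is the second-order average $M_g(\boldsymbol\nu)$ of $g=\partial_r f(\nu_4,\cdot)$, because $g(\nu_4)=0$. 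The bound you need is $M_g(\boldsymbol\nu)\gtrsim s(\boldsymbol\nu)^{1/2}\ge\lambda^{-1/3+\alpha/2}$. It comes from keeping only the piece $[\nu_3,\nu_4]$ of the weight in Lemma \ref{convavg}, where $g''\gtrsim s(\boldsymbol\nu)^{-3/2}$; your ``power of $\lambda^{-2/3+\alpha}$ times $st/\lambda^2$'' is not this. Also, each derivative of the $n_4$ factor costs $\lambda^{2/3+\kappa/2}$ by Lemma \ref{derlemm}, not $\lambda^{2/3-\kappa}$. This is still beaten by $\lambda^{2/3+\alpha/2}$, so (iv) survives.

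The genuine gap is (v), which stays a plan, and its one quantitative claim is wrong. By Lemma \ref{convavg} the weight has total mass $s(s+t)$, so at $r=1$ one has $\partial_r\Phi_{-,-,+}=M_{g(1)}(\boldsymbol\nu)\asymp_\eps s(\boldsymbol\nu)b(\boldsymbol\nu)$ with $b=2s+t$. The correct bound is therefore $|I_{-,-,+}(\mathbf n)|\lesssim\lambda/(sb)$, as in \eqref{boundI--+}, not $\lambda/(st)$. The lost factor $b/t$ reaches $\delta^{-1/2}$ for $t\approx(\lambda\delta)^{1/2}$ and $s\approx\lambda^{1/2}$, which the arithmetic does not exclude. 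The arithmetic input also controls $sb$, not $st$. Fed into the paper's summation scheme, the weight $\lambda/(st)$ only gives $\lambda^{1/2}\delta^{1/4}$.

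More importantly, the summation itself is missing; ``the effective number of configurations is $\lambda\delta$ per scale'' is not an argument. What is needed is Proposition \ref{uncertainty}. Since $J_n(\lambda_n)=0$, Lemma \ref{Lemmstatphase} at $r=1$ gives $f(1,\nu_i)\in\frac{\pi}{\lambda}(\Z+\frac12)+O(\delta\lambda^{-1})$. Hence $M_g(\boldsymbol\nu)$ for the strictly convex $g=-f(1,\cdot)$ is an integer multiple of $\pi/\lambda$ up to $O(\delta/\lambda)$. So $sb\gtrsim\lambda\delta$ forces $sb\gtrsim\lambda$, and the same holds for side differences at fixed $(n_2,n_3)$. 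With this, the summation runs in three steps:
\begin{enumerate}
\item Apply Cauchy--Schwarz isolating $a_{n_2}$ and fix $(n_2,n_3)$.
\item Group the admissible $s$ into clusters $I_l$ with $s_lb_l\gtrsim l\lambda$ and $|I_l|\lesssim\max(\lambda\delta/t,1)$. This gives $\sum_s\lambda^2/(sb)^2\lesssim\max(\lambda\delta/t,1)$.
\item Sum over $t$, using $\#Z\lesssim\lambda\delta$ from \eqref{improvedremainder}.
\end{enumerate}

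Two smaller points in (v):
\begin{itemize}
\item A single integration by parts ``with a logarithmic factor'' would end at $\ln(\lambda)^{3/2}$, not $\ln(\lambda)^{1/2}$. The paper integrates by parts many times and splits into three zones: $r-\nu_4\le s$ and $s\le r-\nu_4\le b/10$, both $O(\lambda^{-N})$ because $\mathbf n\notin T_\alpha$, and $r-\nu_4\gtrsim b$. It then uses $sb\gtrsim\lambda$ to collapse the boundary terms $\sum_k(\lambda/(sb))^k$.
\item The patterns $(+,-,+,-)$ and $(+,+,-,-)$ are not $O(\lambda^{-1})$. Their phase derivatives are first-order differences of size $s(\boldsymbol\nu)$ and $s(\boldsymbol\nu)+t(\boldsymbol\nu)$, so these integrals are of size about $1/s$ and $1/(s+t)$. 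The paper handles them via $|\partial_r\Phi|\ge M_{g(r)}(\boldsymbol\nu)$ and the $(-,-,+)$ argument.
\end{itemize}
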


\myindent In particular, Theorem \ref{main} follows from the proposition. We give the proof of the first three estimates, which are non-oscillatory. The rest of this paper is devoted to proving the last two estimates.

\begin{proof}
    From the a priori estimate on the $L^4$ norm of $\lambda_n^{1/2} J_n(\lambda_n r)$ given by Lemma \ref{l4norm}, and applying Hölder inequality, observe first that
    \begin{equation}\label{univbound}
        \forall \textbf{n} \in T \qquad |C_\textbf{n}|\lesssim \ln(\lambda).
    \end{equation}

    \myindent Thanks to the bound \eqref{univbound}, we may prove the first two estimates of the proposition. Indeed, let us fix more generally $M > 0$, and let us write
    \[\begin{split}
        \sum_{s(\textbf{n}) \leq M} |a_{n_1}||a_{n_2}||a_{n_3}||a_{n_4}| |C_{\textbf{n}}| &\lesssim \ln(\lambda) \sum_{1 \leq s \leq M} \sum_{\textbf{n} \in T, \ s(\textbf{n}) = s} |a_{n_1}||a_{n_1+ s}| |a_{n_4 -s}||a_{n_4}| \\
        &\lesssim \ln(\lambda) \sum_{1 \leq s\leq M} \left(\sum_{n_1 \in Z} |a_{n_1}||a_{n_1 + s}| \right)\left(\sum_{n_4 \in Z} |a_{n_4-s}||a_{n_4}| \right)\\
        &\lesssim M \ln(\lambda),
    \end{split}\]
    where we apply the Cauchy-Schwarz inequality with respect to $n_1$ (resp $n_4$) to obtain that the sum on $n_1$ (resp $n_4$) is bounded by $1$ in the last inequality. 
    
    \myindent Similarly, we may write
    \[\begin{split}
        \sum_{t(\textbf{n}) \leq M} |a_{n_1}||a_{n_2}||a_{n_3}||a_{n_4}| |C_{\textbf{n}}| &\lesssim \ln (\lambda) \sum_{1\leq t\leq M} \sum_{\textbf{n} \in T, t(\textbf{n}) = t} |a_{n_1}||a_{n_2}||a_{n_2 + t}||a_{2n_2 + t - n_1}| \\
        & \lesssim \ln(\lambda) \sum_{1 \leq t \leq M} \sum_{n_2 \in Z} |a_{n_2}||a_{n_2 + t}| \sum_{n_1 \in Z} |a_{n_1}||a_{2n_2 + t - n_1}|\\
        &\lesssim \ln(\lambda) \sum_{1 \leq t \leq M} \sum_{n_2 \in Z} |a_{n_2}||a_{n_2 + t}|\\
        &\lesssim M \ln(\lambda),
    \end{split}\]
    where we apply the Cauchy-Schwarz inequality with respect to $n_1$, for a fixed $n_2,t$, then with respect to $n_1$ for a fixed $t$.
    
    \myindent Now, apply this to $M = \lambda^{1/3 + \alpha}$ or $M = \lambda^{1/2}\delta^{1/2}$.
    
    \quad
    
    \myindent With regards to the third estimate in the proposition, observe that is it enough to prove that, for any $\textbf{n} \in T$, and any $N \geq 1$, there holds
    \[|C_{\textbf{n}}^{int}| \lesssim_{\eps,\kappa,N} \lambda^{-N}.\]
    
    \myindent Now, observe that, on the support of $\eta_{n_4}^{int}$, for any $N\geq 1$, Lemma \ref{expdecay} yields that
    \[\lambda_{n_4}^{1/2} |J_{n_4}(\lambda_{n_4} r)| \lesssim_N \lambda^{-N},\]
    where the upper bound is independent on $n_4$ and on $r$. Moreover, all the other quantities are bounded by a fixed power of $\lambda$.
\end{proof}

\subsection{Reducing the oscillatory cases to oscillatory integrals}\label{subsecredosc}

\myindent In order to estimate the coefficients $C_{\textbf{n}}^{caus}$ and $C_{\textbf{n}}^{int}$, we start by making more explicit their structure as oscillatory integrals. First, we give the following technical lemma, which quantifies the oscillatory structure of $\lambda_n^{1/2} J_n(\lambda_n r)$ for $r - \mu_n$ large enough.

\begin{lemma}\label{Lemmstatphase}
	For any $n\in Z$, for any $N\geq 0$ and for any $r - \mu_n \gtrsim \lambda^{-2/3 + \kappa}$, there holds
	\begin{equation}\label{statphaseforJn}
	\left|\lambda_n^{1/2} J_n(\lambda_n r) - \frac{\cos(\lambda_n(\mu_n \alpha_n - r \sin(\alpha_n)))}{(r^2 - \mu_n^2)^{1/4}} a_{n,N,\lambda}(r) \right| \lesssim_{\kappa,N} \lambda^{-N},
	\end{equation}
	where $\alpha_n(r)$ is defined by
	\[\alpha_n(r) := \arccos(\mu_n / r),\]
	and where $a_{n,N,\lambda}(r)$ is a smooth function such that
	\begin{equation}\label{deranearcaus}
	    \left|\frac{d^K}{dr^K} a_{n,N,\lambda}(r)\right| \lesssim_{\eps,\kappa, N} (r-\mu_n)^{-K}.
	\end{equation}
\end{lemma}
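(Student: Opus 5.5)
We prove the lemma by stationary phase applied to the integral representation \eqref{defJn}. Writing $t = \lambda_n r$ and $n = \lambda_n \mu_n$, we have
\[\lambda_n^{1/2} J_n(\lambda_n r) = \frac{\lambda_n^{1/2}}{2\pi}\int_{\mathbb{T}} e^{i\lambda_n \varphi(\alpha)}\, d\alpha, \qquad \varphi(\alpha) := \mu_n \alpha - r\sin\alpha .\]
The stationary points satisfy $\cos\alpha = \mu_n/r$, so they are $\pm\alpha_n(r)$ with $\alpha_n = \arccos(\mu_n/r)$. At these points
\[\varphi''(\pm\alpha_n) = r\sin(\pm\alpha_n) = \pm (r^2-\mu_n^2)^{1/2},\]
and the critical values are $\pm(\mu_n\alpha_n - r\sin\alpha_n)$. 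By symmetry ($\alpha\mapsto-\alpha$) the two contributions are complex conjugates, so together they produce the cosine. The factor $(r^2-\mu_n^2)^{-1/4}$ is exactly $\lambda_n^{1/2}\cdot \lambda_n^{-1/2}|\varphi''|^{-1/2}$, up to the constant $\sqrt{2\pi}/2\pi$ and a Maslov phase $e^{\mp i\pi/4}$. We absorb that phase by writing $\cos(\cdot)\,a + \sin(\cdot)\,b$ and then choosing $a_{n,N,\lambda}$ complex-valued; alternatively, we keep the shift $\pi/4$ inside the amplitude, which is harmless because only the symbol bounds \eqref{deranearcaus} matter.

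The key step is a rescaled stationary phase, since the two critical points coalesce as $r\to\mu_n$. Set $\sigma := \alpha_n(r) \simeq (r-\mu_n)^{1/2}$; this holds uniformly because $\mu_n \le 1-\eps/2$ and $r\le 1$. We split $\mathbb{T}$ with a partition of unity into three pieces:
\begin{itemize}
\item neighbourhoods of $\pm\alpha_n$ of width $c\sigma$;
\item a neighbourhood of $0$ of width $c\sigma$, where $|\varphi'| = |\mu_n - r\cos\alpha| \gtrsim \sigma^2$;
\item the rest, where $|\varphi'| \gtrsim \sigma^2 + \alpha^2$.
\end{itemize}
The cutoffs have derivatives of order $\sigma^{-k}$.

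On the non-stationary pieces we integrate by parts $N'$ times. Each step gains a factor $(\lambda\sigma^2\cdot\sigma)^{-1}\sigma^{\cdot}$; more precisely, after rescaling $\alpha = \sigma\beta$, the phase becomes $\lambda\sigma^3\tilde\varphi(\beta)$ with $\tilde\varphi$ uniformly nondegenerate. Each integration by parts therefore gains $(\lambda\sigma^3)^{-1} \simeq (\lambda (r-\mu_n)^{3/2})^{-1} \le \lambda^{-3\kappa/2}$. This is where the hypothesis $r-\mu_n\gtrsim\lambda^{-2/3+\kappa}$ enters, and it yields $O_{\kappa,N}(\lambda^{-N})$ remainders.

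On the stationary pieces we apply the stationary phase expansion in the rescaled variable $\beta$, with large parameter $\Lambda := \lambda\sigma^3 \ge \lambda^{3\kappa/2}$. We keep enough terms that the remainder is $O(\Lambda^{-M}) = O(\lambda^{-N})$, and define $a_{n,N,\lambda}$ as the finite expansion
\[\sum_{j} \Lambda^{-j} c_j(r),\]
normalized by $(r^2-\mu_n^2)^{1/4}$.

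The main obstacle is the derivative bound \eqref{deranearcaus}. The coefficients $c_j(r)$ are built from derivatives of $\tilde\varphi$ and of the cutoffs at $\beta = \pm1$, and these depend smoothly on $r$ through the rescaled quantities $\mu_n/r$ and $\sigma$. We must check that each $r$-derivative costs at most $(r-\mu_n)^{-1}$. This holds because $\sigma$, $\Lambda$ and the rescaled phase are all functions of $r-\mu_n$ with symbol behaviour: for example $\frac{d}{dr}\sigma \simeq \sigma/(r-\mu_n)$, and $\Lambda^{-j}$ is a symbol of the same type. We also need the uniformity in $n$ to hold because $\mu_n$ stays in $[0,1-\eps/2]$. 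At $\mu_n$ near $0$ the two critical points do not coalesce, so standard stationary phase applies and the bounds are trivially satisfied.
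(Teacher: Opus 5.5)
Your proposal is essentially the paper's proof. The paper also rescales at the scale $\gamma=\sin\alpha_n\simeq\alpha_n$ and uses the large parameter $\lambda_n r\gamma^3\gtrsim\lambda^{3\kappa/2}$. It integrates by parts away from $\pm\alpha_n$, where $|\phi'|\gtrsim\gamma^2$ and cutoff derivatives cost $\gamma^{-1}$, and defines $a_{n,N,\lambda}$ as the truncated expansion. The symbol bounds come from each $r$-derivative costing at most $(r-\mu_n)^{-1}$, since $r-\mu_n$ is the smallest of $r$, $r\pm\mu_n$. The one methodological difference is cosmetic: the paper straightens the phase by a Morse change of variables near $\alpha_n$ and quotes H\"ormander's Theorem 7.7.5, whereas you apply stationary phase directly to the globally rescaled phase in $\beta=\alpha/\sigma$.

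Some slips need fixing, though none breaks the argument.
\begin{itemize}
\item \emph{The large parameter.} Your $\varphi$ equals $r\phi$ with $\phi(\alpha)=\frac{\mu_n}{r}\alpha-\sin\alpha$. So your $\Lambda$ should be $\lambda_n r\sigma^3$, and $|\varphi'|\gtrsim r\sigma^2$ rather than $\sigma^2$.
\item \emph{The size of $\sigma$.} One has $\sigma\simeq\bigl((r-\mu_n)/r\bigr)^{1/2}$, not $(r-\mu_n)^{1/2}$. The latter fails for small $r$: for $\mu_n=0$, $\sigma=\pi/2$ while $r$ may be as small as $\lambda^{-2/3+\kappa}$. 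The correct parameter still satisfies $\lambda_n r\sigma^3\simeq\lambda(r-\mu_n)^{3/2}r^{-1/2}\geq\lambda(r-\mu_n)^{3/2}\gtrsim\lambda^{3\kappa/2}$, so your bound survives.
\item \emph{Coalescence for small $\mu_n$.} Coalescence of $\pm\alpha_n$ is governed by $\mu_n/r\to1$, which also happens for small $\mu_n$ when $r$ is close to $\mu_n$. Your closing remark is therefore wrong, but the uniform argument makes it unnecessary.
\item \emph{The Maslov phase.} The factors $e^{\pm i\pi/4}$, and the powers of $i$ in the higher-order terms, cannot be absorbed into any amplitude multiplying $\cos(\lambda_n f(r,\mu_n))$, even a complex-valued one. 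This cosine vanishes at points where $\lambda_n^{1/2}J_n(\lambda_n r)$ has size $(r^2-\mu_n^2)^{-1/4}$. What your argument actually proves is your first form $a\cos(\cdot)+b\sin(\cdot)$, i.e. $e^{i\lambda_n f}b_++e^{-i\lambda_n f}b_-$ with symbols $b_\pm$ obeying \eqref{deranearcaus}.
\end{itemize}
That last form is all that is used later. It feeds the expansion of $c^{ext}_{\mathbf{n}}$ into the $I_{\pm,\pm,\pm}$, and Proposition~\ref{uncertainty}, where a constant phase shift cancels in $m_2+m_3-m_1-m_4$. The paper's own proof drops this shift silently, so the lemma as literally stated needs the same correction.
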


\myindent We leave the proof of the lemma to Appendix \ref{applemm1} since it is quite technical. Indeed, while, for a fixed $r$, a stationary phase expansion from the definition of $J_n(\lambda_n r)$ (see \eqref{defJn}) immediately yields the asymptotic estimate \eqref{statphaseforJn} for $\lambda$ large, the subtlety is to control precisely the function $a_{n,N,\lambda}(r)$ (and its derivatives), and, in particular, to obtain implicit constants in the estimates which are independent of $r$.  As an example of this subtlety, we remark that we need to use the stationary phase expansion with significantly more terms than $N$ in order to have a uniform $O(\lambda^{-N})$ remainder, and that the estimate stops working at $r-\mu_n = \lambda^{-2/3}$.

\myindent Thanks to this lemma, and the observation that, if $N$ is large enough, then, for some fixed constant $C$,
\[\sum_{\textbf{n} \in T} |a_{n_1}||a_{n_2}||a_{n_3}||a_{n_4}| O(\lambda^{-N}) = O(\lambda^{C - N}),\]
we see that, in order to prove the estimates of Proposition \ref{defregimes}, we may equivalently replace the coefficients $C_{\textbf{n}}^{caus}$, $C_{\textbf{n}}^{ext}$ by their \textit{oscillatory} parts $c_{\textbf{n}}^{caus}$, $c_{\textbf{n}}^{caus}$, which we define as follows.

\begin{definition}
	For $\textbf{n} \in T$, and $N$ large enough fixed, we introduce the coefficients
	\[\begin{split}
	&c_\textbf{n}^{ext} := \int_0^1 rdr \eta_{n_4}^{ext} \Pi_{i=1}^4 \cos\left(\lambda_{n_i}\left(\mu_{n_i} \alpha_{n_i}(r) - r\sin(\alpha_{n_i}(r))\right) \right) a_{n_i,N}(r) \\
	&c_\textbf{n}^{caus} := \int_0^1 rdr \eta_{n_4}^{caus} \Pi_{i=1}^3 \cos\left(\lambda_{n_i}\left(\mu_{n_i} \alpha_{n_i}(r) - r\sin(\alpha_{n_i}(r))\right)\right) a_{n_i,N}(r) \lambda_{n_4}^{1/2} J_{n_4} (\lambda_{n_4} r) .
	\end{split}\]
\end{definition}

\myindent In particular, if we introduce the function
\[f(r,\mu) := \mu \arccos(\mu / r) - \sqrt{r^2 - \mu^2},\]
observe that we may rewrite $c_\textbf{n}^{ext}$ as a sum of integrals of the form
\[I_{\pm,\pm,\pm}(\textbf{n}) := \int  \exp i \left(\lambda_{n_1}f(r,\mu_{n_1}) \pm\lambda_{n_2}f(r,\mu_{n_2}) \pm \lambda_{n_3}f(r,\mu_{n_3})\pm \lambda_{n_4}f(r,\mu_{n_4})\right) \times \Pi_{i=1}^4 a_{n_i,N}(r) r dr \eta_{n_4}^{ext}\]
and their complex conjugates. Now, it is more convenient to write $I_{\pm,\pm,\pm}$ as an oscillatory integral with a single large parameter $\lambda$. Since $\lambda_{n_i} = \lambda + O(\delta)$, we write
\begin{equation}\label{deferrorterm}
    \lambda_{n_1}f(r,\mu_{n_1}) \pm\lambda_{n_2}f(r,\mu_{n_2}) \pm \lambda_{n_3}f(r,\mu_{n_3})\pm \lambda_{n_4}f(r,\mu_{n_4}) = \lambda \Phi_{\pm,\pm,\pm} (r, \boldsymbol{\nu}) + e(\lambda,r,\boldsymbol{\nu}),
\end{equation}
where we define
\[\Phi_{\pm,\pm,\pm} (r, \boldsymbol{\nu}) := f(r,\nu_1) \pm f(r,\nu_2) \pm f(r,\nu_3) \pm f(r,\nu_4),\]
where 
\[\nu := \frac{n}{\lambda},\]
and $e(\lambda,r,\boldsymbol{\nu})$ is an error term. Observe that
\[|\nu - \mu_n| = n\left|\frac{1}{\lambda} - \frac{1}{\lambda_n}\right| = n \frac{|\lambda-\lambda_n|}{\lambda \lambda_n} \lesssim \delta\lambda^{-1}.\]

\myindent The error term is uniformly small, in the sense of the following result.
\begin{lemma}\label{controlerror}
	For all $\lambda \geq 1$, for all $\mathbf{n}\in T$, define, for $r - \mu_{n_4} \gtrsim \lambda^{-2/3 + \kappa}$,
	\[a_{err}(r, \mathbf{n}, \lambda) := e^{i e(\lambda,r,\boldsymbol{\nu})},\]
	where the error term $e$ is defined by equation \eqref{deferrorterm}. Then, there holds
	\[ \left| \frac{d^K}{dr^K} a_{err}(r,\mathbf{n},\lambda)\right| \lesssim_{\kappa} (r-\mu_{n_4})^{-K}.\]
\end{lemma}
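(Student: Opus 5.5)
We control the error term directly. Write $\lambda_{n_i} = \lambda + \beta_i$ with $|\beta_i|\lesssim \delta$, and recall $\mu_{n_i} = n_i/\lambda_{n_i}$ and $\nu_i = n_i/\lambda$, so that $|\nu_i - \mu_{n_i}|\lesssim \delta\lambda^{-1}$. By definition $e$ is a signed sum over $i$ of the quantities
\[e_i(r) := \lambda_{n_i} f(r,\mu_{n_i}) - \lambda f(r,\nu_i).\]
The key observation is that $\lambda_{n_i}\mu_{n_i} = n_i = \lambda \nu_i$. Using $f(r,\mu) = \mu\arccos(\mu/r) - \sqrt{r^2-\mu^2}$, we can therefore write
\[e_i(r) = n_i\bigl(\arccos(\mu_{n_i}/r) - \arccos(\nu_i/r)\bigr) - \bigl(\lambda_{n_i}\sqrt{r^2-\mu_{n_i}^2} - \lambda\sqrt{r^2-\nu_i^2}\bigr).\]
A cleaner route is to differentiate in $r$. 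Since $\partial_r f(r,\mu) = -\sqrt{r^2-\mu^2}/r$ (the $\arccos$ terms cancel), we get
\[e_i'(r) = -\frac{1}{r}\Bigl(\sqrt{\lambda_{n_i}^2 r^2 - n_i^2} - \sqrt{\lambda^2 r^2 - n_i^2}\Bigr) = -\frac{(\lambda_{n_i}^2-\lambda^2) r}{\sqrt{\lambda_{n_i}^2 r^2 - n_i^2} + \sqrt{\lambda^2 r^2 - n_i^2}}.\]
The numerator is $O(\lambda\delta)$ and the denominator is $\gtrsim \lambda\sqrt{r^2-\mu_{n_i}^2}$. So $|e_i'(r)|\lesssim \delta (r-\mu_{n_i})^{-1/2}$, and differentiating the explicit formula further gives $|e_i^{(K)}(r)|\lesssim_K \delta (r-\mu_{n_i})^{1/2-K}$ for $K\geq 1$.

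On the relevant region $r-\mu_{n_4}\gtrsim\lambda^{-2/3+\kappa}$, and since $\mu_{n_i}\leq \mu_{n_4}+O(\delta/\lambda)$ by the ordering $n_1\leq\dots\leq n_4$, we have $r-\mu_{n_i}\gtrsim r-\mu_{n_4}$. (The $O(\delta\lambda^{-1})$ discrepancy is negligible against $\lambda^{-2/3+\kappa}$.) Also, $\lambda_{n_i}r\geq \lambda_{n_i}\mu_{n_i}=n_i$ ensures that $\lambda r \geq n_i$ up to the same negligible shift, so $\nu_i<r$ and all square roots are defined.

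Now apply Faà di Bruno to $a_{err}=e^{ie}$. Each derivative $\frac{d^K}{dr^K}e^{ie}$ is $e^{ie}$ times a sum of products $\prod_j e^{(k_j)}$ with $\sum k_j = K$ and $k_j \geq 1$. Each factor is bounded by $\delta (r-\mu_{n_4})^{1/2-k_j}$, and $\delta(r-\mu_{n_4})^{1/2}\lesssim 1$ since $\delta\lesssim 1$ and $r\leq1$. Hence every product is $\lesssim (r-\mu_{n_4})^{-K}$, which is the claim. The zeroth-order bound $|a_{err}|=1$ is trivial since $e$ is real.

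The main obstacle is the uniformity of the higher derivatives of $e_i$ near the caustic. The naive bound, differentiating $\lambda f$ and $\lambda_{n_i} f$ separately, loses a factor of $\lambda$; the cancellation in the difference is essential. I would handle it by writing $g(s)=\sqrt{s^2 r^2-n_i^2}$ and $e_i' = -\frac1r\int_\lambda^{\lambda_{n_i}}\partial_s g\,ds$. Then $\partial_r^{K-1}\partial_s g$ can be estimated on the segment $s\in[\lambda,\lambda_{n_i}]$, where $s r - n_i \gtrsim \lambda(r-\mu_{n_4})$ uniformly. Homogeneity gives $|\partial_r^{K-1}\partial_s g|\lesssim (r-\mu_{n_i})^{1/2-K}$ up to harmless powers of $r$, which are bounded below because $r\geq \mu_{n_4}+\lambda^{-2/3+\kappa}$. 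When $\mu_{n_4}$ is near $0$, the factors $1/r$ could be problematic; there I would check that $r \geq r-\mu_{n_4}$ absorbs them into the same power of $(r-\mu_{n_4})^{-1}$.
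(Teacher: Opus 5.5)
Your argument is correct and is essentially the paper's. Both reduce the lemma to $|\partial_r^k e_i|\lesssim\delta(r-\mu_{n_i})^{1/2-k}$ for each $e_i=\lambda_{n_i}f(r,\mu_{n_i})-\lambda f(r,\nu_i)$ by a first-order expansion in the small perturbation, then conclude by the chain rule for $e^{ie}$. The only difference is the interpolation path: the paper splits $e_i=(\lambda_{n_i}-\lambda)f(r,\mu_{n_i})+\lambda\bigl(f(r,\mu_{n_i})-f(r,\nu_i)\bigr)$ and uses the mean value theorem in $\nu$, whereas you use $\lambda_{n_i}\mu_{n_i}=\lambda\nu_i=n_i$ to vary the frequency at fixed $n_i$. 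Your route cancels the $\arccos$ terms exactly and tracks the powers of $r$ near the centre explicitly, which the paper's stated bounds on $\partial_r^k f$ gloss over, harmlessly.
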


\begin{proof}
    One needs only prove the bound
\[\left|\frac{d^k}{dr^k} e(\lambda,r,\boldsymbol{\nu})\right|\lesssim_{\kappa} (r-\mu_{n_4})^{-k}.\]
\myindent Now, for any $i$,
\[\left|\frac{d^k}{dr^k} \left(\lambda_{n_i} f(r,\mu_{n_i}) - \lambda f(r,\nu_i)\right)\right| \lesssim |\lambda - \lambda_{n_i}| \left|\frac{d^k}{dr^k} f(r,\mu_{n_i})\right| + \lambda \left|\frac{d^k}{dr^k} f(r,\mu_{n_i}) - \frac{d^k}{dr^k} f(r,\nu_i)\right|.\]

\myindent From the explicit expression of $f$, one finds that, for any $r \gtrsim \nu + \lambda^{-2/3 +\kappa}$,
\[\left|\frac{d^k}{dr^k} f(r,\nu)\right| \lesssim (r-\nu)^{3/2 - k},\]
and
\[\left|\frac{d}{d\nu} \frac{d^k}{dr^k} f(r,\nu)\right| \lesssim (r-\nu)^{1/2 - k}.\] 

\myindent In particular, we may further bound
\[\left|\frac{d^k}{dr^k} f(r,\mu_{n_i}) - \frac{d^k}{dr^k} f(r,\nu_i)\right| \lesssim \lambda |\mu_{n_i} - \nu_i| (r-\mu_{n_i})^{1/2 - k} \lesssim (r-\mu_{n_i})^{1/2-k}.\]
\end{proof}

\myindent Thanks to this approximation, we may finally write $c_{\mathbf{n}}^{ext}$ as a standard oscillatory integral. Indeed, fixing $N\geq 1$ large enough, set
\[a_{\mathbf{n}}^{ext}(r) := r \Pi_{i=1}^4 a_{n_i,N}(r)  a_{err}(r,\mathbf{n},\lambda).\]

\myindent Then, we may write
\[I_{\pm,\pm,\pm}(\textbf{n}) = \int dr \eta_{n_4}^{ext} (r) e^{i \lambda \Phi_{\pm,\pm,\pm}(r,\boldsymbol{\nu})} a_{\mathbf{n}}^{ext}(r),\]
where the symbol $a_{\mathbf{n}}^{ext}(r)$ enjoys the bound
\[\left| \frac{d^K}{dr^K} a_{\mathbf{n}}^{ext}(r) \right| \lesssim_{\kappa} (r - \mu_{n_4})^{-K},\]
or, equivalently, since $|\mu_{n_4} - \nu_4| = O(\delta \lambda^{-1})$,
\[\left| \frac{d^K}{dr^K} a_{\mathbf{n}}^{ext}(r) \right| \lesssim_{\kappa} (r - \nu_4)^{-K}.\]

\quad

\myindent With a similar proof, we may also rewrite $c_{\mathbf{n}}^{caus}$ as a sum of oscillatory integrals of the form
\[I_{\pm,\pm}(\textbf{n}) := \int dr \eta_{n_4}^{caus}(r) e^{i\lambda \Phi_{\pm,\pm}(r,\boldsymbol{\nu}) }a_{\mathbf{n}}^{caus}(r) \lambda_{n_4}^{1/2} J_{n_4}(\lambda_{n_4} r),\]
where 
\[\Phi_{\pm,\pm}(r,\boldsymbol{\nu}) := f(r,\nu_1) \pm f(r,\nu_2) \pm f(r,\nu_3),\]
and the symbol $a_{\mathbf{n}}^{caus}(r)$ enjoys the bound
\begin{equation}\label{boundderacaus}
    \left| \frac{d^K}{dr^K} a_{\mathbf{n}}^{caus}(r) \right| \lesssim_{\kappa} (r - \nu_3)^{-K}.
\end{equation}

\section{Analysis of the leading term : convexity and improved spacing}\label{secmain}

\myindent In this section, we prove the bound
\begin{equation}\label{boundmain}
    \sum_{\mathbf{n} \in T\backslash T_\alpha, \min(s(\mathbf{n}),t(\mathbf{n})) \geq \lambda^{1/2} \delta^{1/2} } |a_{n_1}||a_{n_2}| |a_{n_3}||a_{n_4}| |I_{-,-,+}(\textbf{n})| \lesssim_{\eps,\kappa} \lambda^{1/2} \delta^{1/2}\ln(\lambda)^{1/2}.
\end{equation}

\myindent As we will argue in Section \ref{secremaind}, the case $(-,-,+)$ is the leading term in the analysis. Indeed, observe that, since we reduced the problem to an estimate of a number of oscillatory integrals with a large parameter $\lambda$, the analysis depends primarily on the gradient of the phases which appear, i.e. on $\partial_r \Phi_{\pm,\pm,\pm}(r,\boldsymbol{\nu})$ and $\partial_r \Phi_{\pm,\pm}(r, \boldsymbol{\nu})$. Now, these are linear combinations of $\partial_r f(r,\nu_i)$, and, as we will see, the monotonicity of $\nu \mapsto \partial_r f(r,\nu)$ yields straightforward lower bounds on the norm of $\partial_r \Phi_{\pm,\pm,\pm}(r,\boldsymbol{\nu})$ and $\partial_r \Phi_{\pm,\pm}(r, \boldsymbol{\nu})$ in all cases, except for the exterior case $(-,-,+)$. In order to study this case, one needs to use the convexity of $\nu \mapsto \partial_r f(r,\nu)$ in a nontrivial way.

\myindent In Section \ref{subsectrapavg}, we will thus prove the estimate, for all $\textbf{n}  \in T\backslash T_\alpha, \min(s(\mathbf{n}),t(\mathbf{n})) \geq \lambda^{1/2} \delta^{1/2}$,
\begin{equation}\label{boundI--+}
    \left| I_{-,-,+}(\textbf{n})\right| \lesssim_{\eps,\alpha} \frac{\lambda}{s(\textbf{n}) b(\textbf{(n)})},
\end{equation}
which is essentially optimal, since one can prove that $ I_{-,-,+}(\textbf{n}) \simeq \left(\frac{\lambda}{s(\textbf{n}) b(\textbf{(n)})}\right)^2$, and the sum of the latest quantity doesn't seem to enjoy better bounds in general. 

\myindent Then, in Section \ref{subsecuncertainty}, we will explain how to sum estimate \eqref{boundI--+} to obtain the bound \eqref{boundmain}, that is we will prove that
\begin{equation}\label{summation}
    \sum_{\mathbf{n} \in T\backslash T_\alpha, \min(s(\mathbf{n}),t(\mathbf{n})) \geq \lambda^{1/2} \delta^{1/2} } |a_{n_1}||a_{n_2}| |a_{n_3}||a_{n_4}|\frac{\lambda}{s(\textbf{n}) b(\textbf{(n)})} \lesssim_{\eps,\kappa} \lambda^{1/2} \delta^{1/2}\ln(\lambda)^{1/2}
\end{equation}

\myindent This relies on the introduction of a new counting estimate on the distribution of zero sum sets $\textbf{n} \in T$, which can be seen as a \textit{quadrilateral} version of the standard \textit{Jarnik's Lemma} on triangles inscribed inside a convex curve, see Proposition \ref{uncertainty}.

\subsection{Second order average of convex function and estimate of $I_{-,-,+}(\textbf{n})$}\label{subsectrapavg}

\myindent Before giving the proof of estimate \eqref{boundI--+}, we introduce a straightforward result, which we choose to write as a lemma given its importance in the following.

\myindent Consider $g$ a strictly convex function on an interval $I \subset \R$, and find $\nu_1,\nu_2,\nu_3,\nu_4 \in I$ satisfying the zero sum set condition $\nu_1 + \nu_4 = \nu_2 + \nu_3$, and, without loss of generality $\nu_1 < \nu_2 < \nu_3 < \nu_4$. We consider the following \textit{second order average} of $g$ along the zero sum set $\boldsymbol{\nu} = (\nu_1,\nu_2,\nu_3,\nu_4)$
\[M_g(\boldsymbol{\nu}) := g(\nu_1) - g(\nu_2) - g(\nu_3) + g(\nu_4),\]
which, modulo a factor $1/2$, can be interpreted as the difference between the average of $g(\nu_1)$ and $g(\nu_4)$, and the average of $g(\nu_2)$ and $g(\nu_3)$. Observe that, due to the zero sum set condition $\nu_1 + \nu_4 = \nu_2 + \nu_3$, then $M_g(\boldsymbol{\nu})$ vanishes in the case where $g$ is linear, and it is positive when $g$ is strictly convex, hence it is naturally a second order quantity which measures the convexity of $g$ on the zero sum set $\boldsymbol{\nu}$. We give the following quantitative version of that statement.

\begin{lemma}\label{convavg}
    There holds for any function $g$ (not necessarily convex)
    \[M_g(\boldsymbol{\nu}) = \int_{\nu_1}^{\nu_4} g''(\nu) \left((\nu - \nu_1) 1_{\nu_1 \leq \nu \leq \nu_2} + s(\boldsymbol{\nu}) 1_{\nu_2 \leq \nu \leq \nu_3} + (\nu_4 - \nu) 1_{\nu_3 \leq \nu \leq \nu_4}\right) d\nu.\]
    
    \myindent In particular when $g$ is strictly convex, there holds
    \[\frac{1}{2} s(\boldsymbol{\nu}) b(\boldsymbol{\nu}) \min_{\nu \in [\nu_1,\nu_4]} g''(\nu) \leq M_g(\boldsymbol{\nu}) \leq s(\boldsymbol{\nu}) b(\boldsymbol{\nu}) \max_{\nu \in [\nu_1,\nu_4]} g''(\nu).\]
\end{lemma}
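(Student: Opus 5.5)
The identity is a Peano-kernel computation; I would obtain it from two first-order Taylor formulas, each written with an integral remainder. Set $s = \nu_2 - \nu_1 = \nu_4 - \nu_3$, which is the same number by the zero sum condition. Then
\[M_g(\boldsymbol{\nu}) = \bigl(g(\nu_4) - g(\nu_3)\bigr) - \bigl(g(\nu_2) - g(\nu_1)\bigr) = \int_0^{s} \bigl(g'(\nu_3 + u) - g'(\nu_1 + u)\bigr)\, du .\]
Since $\nu_3 - \nu_1 = \nu_4 - \nu_2 = t + s$, where $t = \nu_3 - \nu_2$, the integrand equals $\int_{\nu_1+u}^{\nu_3+u} g''(\nu)\,d\nu$. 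Applying Fubini to the region $\{(u,\nu) : 0\le u\le s,\ \nu_1+u \le \nu \le \nu_3 + u\}$ gives $M_g = \int g''(\nu)\,K(\nu)\,d\nu$. Here $K(\nu)$ is the length of the set $\{u\in[0,s] : \nu - \nu_3 \le u \le \nu - \nu_1\}$. This only needs $g \in C^2$.

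The next step is to compute $K$ piecewise. For $\nu\in[\nu_1,\nu_2]$ the set is $[0,\nu-\nu_1]$, so $K = \nu - \nu_1$. For $\nu\in[\nu_2,\nu_3]$ it is all of $[0,s]$, so $K = s$. For $\nu\in[\nu_3,\nu_4]$ it is $[\nu-\nu_3, s]$, so $K = s - (\nu - \nu_3) = \nu_4 - \nu$. Outside $[\nu_1,\nu_4]$ the set is empty. This is exactly the trapezoidal kernel in the statement, so the identity follows.

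For the two-sided bound, note that $K\ge 0$, so if $g''>0$ we get
\[\min_{[\nu_1,\nu_4]} g'' \int K \;\le\; M_g \;\le\; \max_{[\nu_1,\nu_4]} g'' \int K .\]
The kernel integrates to $\int K = s^2/2 + s t + s^2/2 = s(s+t) = s(b - s)$, using $b = 2s + t$. This lies between $\tfrac12 s b$ (because $s+t \ge b/2$) and $s b$. These bounds give the claimed inequalities.

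I expect no step to be a real obstacle; the only point needing care is the bookkeeping of the Fubini region. In particular one should check the ordering $\nu_1\le\nu_2\le\nu_3\le\nu_4$, which is what makes the three pieces of $K$ nonoverlapping.
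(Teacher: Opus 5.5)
Your proposal is correct and is essentially the paper's argument: write $M_g$ as a difference of two integrals of $g'$ over intervals of length $s$, turn it into a double integral of $g''$, and use Fubini to read off the trapezoidal kernel. The paper pairs $g'(\nu_3+v)$ with $g'(\nu_2-v)$ by reflection, whereas you pair $g'(\nu_3+u)$ with $g'(\nu_1+u)$ by translation, and both give the same kernel. Your computation $\int K = s(s+t) = s(b-s)\in[\tfrac12 sb,\, sb]$ spells out the ``in particular'' step that the paper leaves implicit. For that step you only need $g''\ge 0$, which is what strict convexity of a $C^2$ function gives, not $g''>0$.
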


\begin{proof}
    Write
    \[\begin{split}
        M_g(\boldsymbol{\nu}) &= - \int_{\nu_1}^{\nu_2} g'(\nu) d\nu + \int_{\nu_3}^{\nu_4} g'(\nu) d\nu  \\
        &= \int_0^{s(\boldsymbol{\nu})} (g'(\nu_3 + \nu) - g'(\nu_2 - \nu)) d\nu \\
        &= \int_0^{s(\boldsymbol{\nu})} \int_{\nu_2-\nu}^{\nu_3 +\nu} g''(u) du d\nu \\
        &= \int_{\nu_1}^{\nu_4} g''(\nu)\left((\nu - \nu_1) 1_{\nu_1 \leq \nu \leq \nu_2} + s(\boldsymbol{\nu}) 1_{\nu_2 \leq \nu \leq \nu_3} + (\nu_4 - \nu) 1_{\nu_3 \leq \nu \leq \nu_4}\right) d\nu.
    \end{split}\]
\end{proof}

\myindent Thanks to this lemma, we can now prove estimate \eqref{boundI--+}. We recall that $I_{-,-,+}(\textbf{n})$ is written as an oscillatory integral
\[I_{-,-,+}(\textbf{n}) = \int dr \eta_{n_4}^{ext}(r) e^{i\lambda \Phi_{-,-,+}(r,\boldsymbol{\nu})} a_{\textbf{n}}^{ext}(r).\]
\begin{proof}
    For $r \in [0,1]$ and $\nu \in [0,r]$, let 
    \[g(r)(\nu) := \partial_r f(r,\nu) = - \frac{1}{r}\sqrt{r^2 - \nu^2},\]
    which is a strictly convex function of $\nu$ for a fixed $r$. Indeed, let us write, to fix ideas,
    \[\begin{split}
        &\partial_\nu\partial_r f(r,\nu) = \frac{\nu}{r\sqrt{r^2 - \nu^2}}\\
        &\partial_{\nu\nu}\partial_r f(r,\nu) = \frac{r}{(r^2 - \nu^2)^{3/2}}.
    \end{split}\]
    
    \myindent Then, observe that 
    \[\partial_r \Phi_{-,-,+}(r,\boldsymbol{\nu}) = M_{g(r)}(\boldsymbol{\nu}),\]
    and thus, thanks to Lemma \ref{convavg}, we deduce that $\partial_r \Phi_{-,-,+}$ doesn't vanish on the support of $\eta_{n_4}^{ext}(r)$. In particular, we may integrate by parts $K$ times in $r$, and write that 
    \[\left|I_{-,-,+}(\textbf{n})\right| \leq \lambda^{-K} \int \left|J^K\left(\eta_{n_4}^{ext} a_{\textbf{n}}^{ext} \right) (r)\right| dr + B_K(\boldsymbol{\nu}),\]
    where we define the vector field
    \[Ju := \frac{d}{dr} \left(\frac{1}{\partial_r \Phi_{-,-,+}(\cdot,\boldsymbol{\nu})} u\right),\]
    and where $B_K(\boldsymbol{\nu})$ is a boundary term which appears since the integrand doesn't vanish at $r = 1$, namely 
    \[B_K(\boldsymbol{\nu})= \sum_{k=1}^K \lambda^{-k} \frac{1}{\partial_r \Phi_{-,-,+}(1, \boldsymbol{\nu})} \left|J^{k-1} \left(\eta_{n_4}^{ext} a_{\textbf{n}}^{ext} \right) (1) \right|.\]
    
    \myindent Now, we claim that there holds
    \begin{equation}\label{estJK}
        \left|J^K\left(\eta_{n_4}^{ext} a_{\textbf{n}}^{ext}\right)(r)\right| \lesssim\frac{1}{(\partial_r \Phi_{-,-,+}(r,\boldsymbol{\nu}))^K} \frac{1}{(r-\nu_4)^{K+1}}.
    \end{equation}
    
    \myindent Indeed, observe that, when iterating $J$, each time a derivative falls on $a$ or $\eta_{n_4}^{ext}$, one loses at most a factor $(\partial_r \Phi_{-,-,+}(r,\boldsymbol{\nu}))^{-1} (r-\nu_4)^{-1}$. Moreover, each time the derivative falls on terms depending on $\Phi_{-,-,+}$, the loss can be bounded be the same factor once one observes that 
    \[\left|\frac{\partial_r^p\Phi_{-,-,+}}{\partial_r \Phi_{-,-,+}} \right|\lesssim (r-\nu_4)^{-(p-1)}.\]
    \myindent This comes from the observation that 
    \[\partial_r^p \Phi_{-,-,+} (r,\boldsymbol{\nu}) = M_{g^{(p-1)}(r)}(\boldsymbol{\nu}),\]
    which yields the result from the explicit expression of second order averages, and the fact that, from a direct computation,
    \[|\partial_r^p \partial_\nu^2 f(r,\nu)| \lesssim (r-\nu_4)^{-(p-1)} \partial_r \partial_\nu^2 f(r,\nu).\]
    
    \myindent In particular, since $1 - \nu_4 \geq \eps$, we may already bound the boundary term $B_K(\boldsymbol{\nu})$ by
    \[B_K(\boldsymbol{\nu}) \lesssim_{\eps, K} \sum_{k=1}^K \lambda^{-k} \frac{1}{M_{g(1)}(\boldsymbol{\nu})^k},\]
    which, thanks to Lemma \ref{convavg}, can again be bounded by
    \[B_K(\boldsymbol{\nu}) \lesssim_{\eps,K} \sum_{k=1}^K \frac{1}{(\lambda s(\boldsymbol{\nu}) b(\boldsymbol{\nu}))^k} = \sum_{k=1}^K \left(\frac{\lambda}{s(\textbf{n}) b(\textbf{n})}\right)^k.\]
    
    \myindent Now, the quadrilateral Jarnik's Lemma which we will prove in Proposition \ref{uncertainty} ensures that, since $s(\textbf{n}) \gtrsim \lambda^{1/2} \delta^{1/2}$, and hence $s(\textbf{n})b(\textbf{n}) \gtrsim s(\textbf{n})^2 \gtrsim \lambda \delta$, then there holds
    \[s(\textbf{n}) b(\textbf{n}) \gtrsim \lambda.\]
    \myindent In particular, we may finally write that
    \[B_K(\boldsymbol{\nu}) \lesssim_{\eps,K} \frac{\lambda}{s(\textbf{n}) b(\textbf{n})}.\]
    
    \quad
    
    \myindent Now, we need to prove the estimate
    \[\lambda^{-K} \int_{r \gtrsim \nu_4 + \lambda^{-2/3 + \kappa}} \frac{1}{M_{g(r)}(\boldsymbol{\nu})^K} \frac{1}{(r-\nu_4)^{K+1}} dr \lesssim \frac{\lambda}{s(\textbf{n}) b(\textbf{n})},\]
    provided $K$ is large enough. We decompose the integral into different zones.
    
    \myindent i. For $\lambda^{-2/3 + \kappa} \leq r-\nu_4 \leq s(\boldsymbol{\nu})$, from the explicit expression of $M_{g(r)}(\boldsymbol{\nu})$ given in Lemma \ref{convavg}, and the computation of $\partial_r\partial_\nu^2 f(r,\nu)$, we may bound
    \[M_{g(r)}(\boldsymbol{\nu}) \gtrsim s(\boldsymbol{\nu})^2 (r-\nu_3)^{-3/2} \gtrsim s(\boldsymbol{\nu})^{1/2},\]
    hence there holds
    \[\lambda M_{g(r)}(\boldsymbol{\nu}) (r-\nu_4) \gtrsim \lambda s(\boldsymbol{\nu})^{1/2} \lambda^{-2/3 + \kappa} \gtrsim \lambda^{\alpha/2 + \kappa},\]
    thus this part of the integral is $O(\lambda^{-N})$ for any $N$ as long as $K$ is large enough.
    
    \myindent ii. For $s(\boldsymbol{\nu}) \leq r - \nu_4 \leq \frac{1}{10} b(\boldsymbol{\nu})$, we may bound
    \[M_{g(r)}(\boldsymbol{\nu}) \gtrsim s(\boldsymbol{\nu})(\partial_r \partial_\nu f(r,\nu_3) - \partial_r \partial_\nu f(r,\nu_2)) \gtrsim s(\boldsymbol{\nu}) \partial_r \partial_\nu f(r,\nu_3) \gtrsim s(\boldsymbol{\nu}) (r-\nu_3)^{-1/2}.\]
    
    Indeed, the point is that 
    \[\partial_r \partial_\nu f(r,\nu_2) = \frac{\nu_2}{r(r + \nu_2)^{1/2}(r-\nu_2)^{1/2}} \leq \frac{\nu_3}{r(r+\nu_3)^{1/2}} (r-\nu_2)^{-1/2} \leq \frac{1}{\sqrt{2}} \frac{\nu_3}{r(r+\nu_3)^{1/2}} (r-\nu_3)^{-1/2}= \frac{1}{\sqrt{2}}\partial_r \partial_\nu f(r,\nu_3),\]
    as long as $\frac{\nu_3 - \nu_2}{r - \nu_2} \geq 1$ (for example). In particular, we deduce that 
    \[\lambda M_{g(r)}(\boldsymbol{\nu}) \gtrsim \lambda s(\boldsymbol{\nu}) (r-\nu_3)^{-1/2} (r-\nu_4) \gtrsim \lambda s(\boldsymbol{\nu})^{3/2} \gtrsim \lambda^{3\alpha / 2},\]
    hence that part of the integral is a $O(\lambda^{-N})$ for any $N$ as long as $K$ is large enough.
    
    \myindent iii. Finally, if $r - \nu_4 \gtrsim \frac{1}{10}b(\boldsymbol{\nu})$, we may now write that $r-\nu_4 \simeq r- \nu_1$, hence
    \[\lambda M_{g(r)} (\boldsymbol{\nu}) (r-\nu_4) \gtrsim \lambda s(\boldsymbol{\nu}) b(\boldsymbol{\nu}) (r-\nu_1)^{-3/2} (r-\nu_4) \gtrsim \lambda s(\boldsymbol{\nu}) b(\boldsymbol{\nu}),\]
    hence that part of the integral is a $O\left(\left(\frac{\lambda}{s(\textbf{n}) b(\textbf{n})}\right)\right)$, thus a $O\left(\frac{\lambda}{s(\textbf{n}) b(\textbf{n})}\right)$ as above.
\end{proof}

\subsection{Quadrilateral Jarnik's Lemma and counting estimate}\label{subsecuncertainty}

\myindent In this section, we start by a crucial proposition on the spacing of the four points in a zero sum set, and, as a direct corollary, on the spacing between two zero sum sets with same top, which we then use to prove the counting estimate \eqref{summation}. 

\begin{proposition}\label{uncertainty}
    Let $\textbf{n} \in T$. If
    \[s(\textbf{n}) b(\textbf{n}) \gtrsim \lambda \delta, \]
    then there actually holds
    \[s(\textbf{n}) b(\textbf{n}) \gtrsim \lambda.\]
    
    \myindent As a corollary, the following spacing version holds : let $n_1', n_4' \in Z$ such that $\textbf{n}' = (n_1', n_2, n_3, n_4') \in T$, and such that $\sigma := s(\textbf{n}') - s(\textbf{n}) > 0$. Then, if 
    \[\sigma b(\textbf{n}) \gtrsim \lambda \delta,\]
    there holds 
    \[\sigma b(\textbf{n}) \gtrsim \lambda.\]
\end{proposition}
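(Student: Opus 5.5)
The plan is to read the zero sum condition on the joint spectrum as a statement about lattice points lying near a strictly convex curve, and to exploit the integrality of the radial quantum numbers $k$. By the Bohr--Sommerfeld description of the Bessel zeros in the region $\mu\leq 1-\eps/2$ (as in \cite{chabert2025infty}), for $(k,n)\in\mathcal{Z}$ we have
\[k+\tfrac14=\frac{\lambda_{k,n}}{\pi}F\!\left(\frac{n}{\lambda_{k,n}}\right)+O_\eps(\lambda^{-1}),\qquad F(\nu):=\sqrt{1-\nu^2}-\nu\arccos\nu .\]
Since $\partial_\lambda\big(\lambda F(n/\lambda)\big)=F(\nu)-\nu F'(\nu)$ is bounded, and $\lambda_{k,n}=\lambda+O(\delta)$, we may replace $\lambda_{k,n}$ by $\lambda$. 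This gives
\[k_n=\frac{\lambda}{\pi}F(\nu)-\tfrac14+O(\delta+\lambda^{-1}),\qquad \nu=\frac{n}{\lambda}.\]
The function $F$ satisfies $F''(\nu)=-(1-\nu^2)^{-1/2}$, which is bounded above and below by constants depending on $\eps$ on $[0,1-\eps/2]$. Thus $-F$ is uniformly strictly convex there.

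For $\mathbf{n}\in T$, introduce the integer
\[\mathcal{M}(\mathbf{n}):=-(k_{n_1}-k_{n_2}-k_{n_3}+k_{n_4})\in\Z .\]
The constants $\tfrac14$ cancel because the signs sum to zero. Hence
\[\mathcal{M}(\mathbf{n})=\frac{\lambda}{\pi}M_{-F}(\boldsymbol{\nu})+O(\delta+\lambda^{-1}).\]
Since $\boldsymbol{\nu}$ is a zero sum set with side $s(\mathbf{n})/\lambda$ and base $b(\mathbf{n})/\lambda$, Lemma \ref{convavg} applied to $g=-F$ gives
\[c_\eps\,\frac{s(\mathbf{n})b(\mathbf{n})}{\lambda}\;\leq\;\frac{\lambda}{\pi}M_{-F}(\boldsymbol{\nu})\;\leq\; C_\eps\,\frac{s(\mathbf{n})b(\mathbf{n})}{\lambda}.\]

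Now assume $s(\mathbf{n})b(\mathbf{n})\geq A\lambda\delta$, where $A$ is a large constant chosen so that $c_\eps A\delta$ exceeds twice the $O(\delta+\lambda^{-1})$ error; recall $\delta\geq\lambda^{-1/3}\gg\lambda^{-1}$. Then $\mathcal{M}(\mathbf{n})\geq \tfrac12 c_\eps\, s(\mathbf{n})b(\mathbf{n})/\lambda>0$. Being a positive integer, $\mathcal{M}(\mathbf{n})\geq1$, so
\[C_\eps\,\frac{s(\mathbf{n})b(\mathbf{n})}{\lambda}+O(\delta)\geq 1 .\]
If $\delta$ is below a small threshold depending on $\eps$, this yields $s(\mathbf{n})b(\mathbf{n})\gtrsim_\eps\lambda$. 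If $\delta$ is of order $1$ or larger, the conclusion is already implied by the hypothesis. This is the "quadrilateral Jarnik" mechanism: a nondegenerate second order average on a lattice quadrilateral cannot be smaller than $1$.

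For the corollary, the identity $n_1+n_4=n_2+n_3=n_1'+n_4'$ together with $\sigma>0$ gives $n_1'=n_1-\sigma$ and $n_4'=n_4+\sigma$. So $(n_1',n_1,n_4,n_4')$ is itself a zero sum set, with side $\sigma$ and base $b(\mathbf{n}')=b(\mathbf{n})+2\sigma$. Equivalently, $\mathcal{M}(\mathbf{n}')-\mathcal{M}(\mathbf{n})$ is an integer equal to $\frac{\lambda}{\pi}$ times the second order average on this new quadrilateral, up to $O(\delta)$. The hypothesis $\sigma b(\mathbf{n})\gtrsim\lambda\delta$ implies $\sigma b(\mathbf{n}')\gtrsim\lambda\delta$, and the first part then gives $\sigma b(\mathbf{n}')\gtrsim\lambda$.

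The main obstacle is passing from $b(\mathbf{n}')$ back to $b(\mathbf{n})$. When $\sigma\leq b(\mathbf{n})$ this is immediate, since $b(\mathbf{n}')\leq3b(\mathbf{n})$. When $\sigma>b(\mathbf{n})$, I would not use the crude lower bound in Lemma \ref{convavg}. Instead I would use its exact kernel formula: the weight $\sigma\,1_{[\nu_2,\nu_3]}$, together with the triangular pieces of width $\sigma$ adjacent to $[\nu_1,\nu_4]$, integrates $g''$ over a region in which a sub-piece is comparable to $\sigma\,b(\mathbf{n})/\lambda^2$. I would try to isolate this sub-piece by comparing $\mathcal{M}$ for the two nested quadrilaterals. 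If that fails, it suffices to check that the corollary is only invoked in the regime $\sigma\lesssim b(\mathbf{n})$ in the counting argument for \eqref{summation}.

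A secondary technical point is the uniformity of the Bohr--Sommerfeld remainder $O(\lambda^{-1})$ for $\mu\leq1-\eps/2$. I would import this from the Bessel zero asymptotics used in \cite{chabert2025infty}.
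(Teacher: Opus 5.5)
Your first part is the paper's argument. The quantization condition at $r=1$ places the $\nu_i=n_i/\lambda$ in an $O(\delta/\lambda)$-neighbourhood of a strictly convex lattice curve. Lemma~\ref{convavg} then makes the alternating sum comparable to $s(\mathbf n)b(\mathbf n)/\lambda^2$, and integrality forces it to be $\gtrsim 1/\lambda$ once it beats the error.

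The one difference is where the quantization comes from. You import the Bohr--Sommerfeld rule for Bessel zeros and identify the integer with $k_n$. The paper extracts $\lambda_n f(1,\mu_n)\in(\Z+\frac12)\pi+O(\lambda^{-1})$ directly from Lemma~\ref{Lemmstatphase} at $r=1$ and $J_n(\lambda_n)=0$, with an unnamed integer. That is all that is needed, and it keeps the proof self-contained.

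Two slips are harmless. First, $F''=+(1-\nu^2)^{-1/2}$, so $F$ itself is convex (it is the paper's $g=-f(1,\cdot)$), and your $\mathcal M(\mathbf n)$ is a negative integer. Second, the offset is $-\frac14$ (with $k\geq 1$), not $+\frac14$. Only $|\mathcal M|\geq1$ is used, and the offsets cancel.

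For the corollary, your reduction to $\tilde{\mathbf n}=(n_1',n_1,n_4,n_4')$ is the argument the paper leaves implicit; it gives no separate proof. You are right that this yields only
\[\sigma\, b(\mathbf n')=\sigma\bigl(b(\mathbf n)+2\sigma\bigr)\gtrsim\lambda,\]
so the stated $\sigma b(\mathbf n)\gtrsim\lambda$ follows only when $\sigma\lesssim b(\mathbf n)$. Neither of your remedies closes the other case.

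\begin{itemize}
\item Isolating a sub-piece cannot work. Integrality is available only for the full sum $\mathcal M(\tilde{\mathbf n})=\mathcal M(\mathbf n')-\mathcal M(\mathbf n)$. In its kernel the flat part contributes about $\sigma b(\mathbf n)/\lambda^2$ and the triangular parts about $\sigma^2/\lambda^2$. No sub-piece is quantized, since the triangular pieces involve $F'(\nu_1)$ and $F'(\nu_4)$.
\item To leading order the relevant quantity is $c\,\sigma(\sigma+b(\mathbf n))/\lambda$. For $\sigma\simeq\lambda^{1/2}\gg b(\mathbf n)$ it moves in steps of size $\lambda^{-1/2}\ll\delta$ as $\sigma$ varies. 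So nothing prevents it from lying within $O(\delta)$ of $1$ while $\lambda\delta\lesssim\sigma b(\mathbf n)\ll\lambda$. The literal statement is out of reach of this mechanism.
\item Your fallback also fails. In the counting argument, the gap $\sigma$ between consecutive cluster leaders is not controlled by the base of the first leader. So the corollary is invoked with $\sigma\gg b(\mathbf n)$.
\end{itemize}

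What rescues the paper is that the weak form you proved is all the counting argument needs. In the induction, use $b(\mathbf n^{(i_{l+1})})\geq b(\mathbf n^{(i_l)})$ to write
\[s(\mathbf n^{(i_{l+1})})\,b(\mathbf n^{(i_{l+1})})\geq \sigma\, b(\mathbf n^{(i_{l+1})})+s(\mathbf n^{(i_l)})\,b(\mathbf n^{(i_l)}).\]
The cluster rule gives $\sigma b(\mathbf n^{(i_l)})\gtrsim\lambda\delta$, hence $\sigma b(\mathbf n^{(i_{l+1})})\gtrsim\lambda\delta$. The base of $\tilde{\mathbf n}$ is exactly $b(\mathbf n^{(i_{l+1})})$, so the first part gives $\sigma b(\mathbf n^{(i_{l+1})})\gtrsim\lambda$. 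The right-hand side is therefore $\gtrsim(l+1)\lambda$.

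You should state and prove the corollary as $\sigma\,b(\mathbf n')\gtrsim\lambda$. The version with $b(\mathbf n)$ is a gap your proposal shares with the paper, not one you can fill.
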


\myindent Before turning to the proof of the proposition, we give a brief visual interpretation of it in terms of the \textit{area} of a \textit{trapezoidal quadrilateral} inscribed into a convex curve, and moreover with vertices in a lattice. Indeed, find $\gamma$ a strictly convex curve in $\R^2$, and consider the lattice $\Lambda := \frac{1}{\lambda}\Z^2$. A classical result, commonly known as Jarnik's Lemma (see \cite{jarnik1926gitterpunkte}), yields that, if $p_1,p_2,p_3$ are three distinct points of $\Lambda$ which lie on $\gamma$, then two among the three are at a distance at least $c\lambda^{-2/3}$ for some $c(\gamma) > 0$. This is a direct consequence of the fact that the three points form a triangle inscribed inside $\gamma$, which is necessarily non flat due to the convexity of $\gamma$. Then, one uses a lower bound on the area of the triangle using moreover that its vertices lie on the lattice $\Lambda$.

\myindent Now, find instead four distinct points $p_1,p_2,p_3,p_4$ in $\Lambda \cap \gamma$, which are at a distance $\ll 1$ one from another, and consider the inscribed quadrilateral $Q \subset \gamma$ that they form. Assume moreover that, in suitable coordinates locally, $\gamma = \{y = g(x)\}$ is the graph of a strictly convex function $g$, and that, if, in those coordinates, $p_i = (\nu_i, g(\nu_i))$, with $\nu_1 < \nu_2 < \nu_3 < \nu_4$, then the quadruplet $\boldsymbol{\nu}$ satisfies the zero sum set condition $\nu_1 + \nu_4 = \nu_2 + \nu_3$. Then, the area of $Q$ is, on the one hand, bounded from below by a constant depending only on $g$ times $s(\boldsymbol{\nu}) b(\boldsymbol{\nu})$. On the other hand, the upper bound and lower bound of second order averages of strictly convex functions on a zero sum $\boldsymbol{\nu}$ set by constants times its area $s(\boldsymbol{\nu}) b(\boldsymbol{\nu})$ in Lemma \ref{convavg} thus yield that the area of $Q$ is of the order of the second order average $M_g(\boldsymbol{\nu})$, which is a weighted sum of points in $\frac{1}{\lambda}\Z$. In particular, since $Q$ is non flat due to the convexity of $\gamma$, this sum is nonzero, and thus at least $1/\lambda$. Overall, one finds a bound of the form
\[s(\boldsymbol{\nu})b(\boldsymbol{\nu}) \gtrsim \lambda^{-1},\]
which immediately yields, for example, that $|p_1 - p_2|, |p_3 - p_4| \gtrsim \lambda^{-1/2}$. The proof of the proposition follows the same ideas, however, since we have a $\delta > 0$ size of spectral window, we need moreover to consider quadrilateral which are inscribed in a small $O(\delta \lambda^{-1})$ neighborhood of the curve $\gamma$. Thus, we finally prove a conditional result, where the condition ensures that the quadrilateral $Q$ is \textit{non flat}, i.e. we need to assume a priori some distance between the $p_i$ to guarantee that the $\delta \lambda^{-1}$-neighborhood of $\gamma$ is still "curved".

\myindent Finally, we note that, if the points $p_i$ are already very close one to another (say $\lambda^{-\eps}$), then the above condition yields that the quadrilateral $Q$ is, to the main order, a \textit{trapeze}, since, modulo a remainder of a smaller order in $\lambda$, the condition of zero sum set yields that the sides $(p_1,p_4)$ and $(p_2,p_3)$ are parallel. In fact, if $\gamma = \{y = f(x) =  Ax^2 + Bx + C\}$ is an arc of parabola (i.e. it has constant curvature), one can compute that the zero sum set condition on the coordinates $(x_i)$ is equivalent to the fact that the points $(x_i, f(x_i))$ form an inscribed trapeze.

\begin{proof}
    First, assume that we can prove that, for $i=1,2,3,4$, there exists an integer $m_i \in \Z$ such that
    \begin{equation}\label{lattice}
        f(1,\nu_i) = \frac{m_i + 1/2}{\lambda} \pi + O(\delta \lambda^{-1}).
    \end{equation}
    
    \myindent Then, fix 
    \[g(\nu) = -f(1,\nu) = \sqrt{1 - \nu^2} - \nu \arccos(\nu),\]
    which is a strictly convex function on $[\eps,1 - \eps]$ (it is a primitive of $-\arccos(\nu)$). We may thus write
    \[s(\boldsymbol{\nu})b(\boldsymbol{\nu}) \gtrsim M_g(\boldsymbol{\nu}) = \frac{m_2 + m_3 - m_1 - m_4}{\lambda} \pi + O(\delta \lambda^{-1}).\]
    
    \myindent Now, if we assume that $s(\textbf{n}) b(\textbf{n}) \geq C\lambda \delta$ for $C> 0$ a large enough constant which doesn't depend on $\textbf{n}$ or $\lambda,\delta$, the left-hand side is strictly larger than the remainder in the right-hand side of the equation. In particular, $m_2 + m_3 - m_1 - m_4$ cannot vanish. Since it is an integer, it is thus larger than $1$, and we may finally write
    \[s(\boldsymbol{\nu}) b(\boldsymbol{\nu}) \gtrsim \lambda^{-1},\]
    which is equivalent to the result.
    
    \myindent We now turn to the proof of \eqref{lattice}. Let us fix, more generally, any $n \in Z$. Applying Lemma \ref{Lemmstatphase} for $N = 1$ and $r = 1$, we find that, since $J_n(\lambda_n) = 0$ by definition,
    \[\frac{\cos(\lambda_n f(1, \mu_n))}{(1 - \mu_n^2)^{1/4}} a_{n,1}(1,\mu_n) = O(\lambda^{-1}),\]
    which we may rewrite, since $1 - \mu_n \geq \eps$, as
    \[\cos(\lambda_n f(1, \mu_n)) = O(\lambda^{-1}).\]
    
    \myindent In particular, this ensures that there exists $m \in \Z$ such that
    \[\lambda_n f(1, \mu_n) = (m + 1/2) \pi + O(\lambda^{-1}).\]
    
    \myindent Now, letting $\nu = n/\lambda$, we may rewrite that
    \[\lambda_n f(1, \mu_n) = (\lambda+O(\delta))f(1, \nu + O(\delta \lambda^{-1})) = \lambda f(1,\nu) + O(\delta),\]
    and thus, dividing the equality by $\lambda$,
    \[f(1,\nu) = \frac{m+1/2}{\lambda} \pi + O(\delta \lambda^{-1}).\]
\end{proof}

\myindent We now prove the counting estimate \eqref{summation} using the improved spacing of zero sum sets. 

\begin{proof}
    Let us first write
    \[\begin{split}
        \sum_{s(\textbf{n}) \gtrsim \lambda^{1/2} \delta^{1/2}} |a_{n_1}||a_{n_2}||a_{n_3}||a_{n_4}| \frac{\lambda}{s(\textbf{n}) b(\textbf{n})} &\leq \left(\sum_{\textbf{n} \in T} |a_{n_1}|^2 |a_{n_3}|^2 |a_{n_4}|^2\right)^{1/2} \left(\sum_{\textbf{n}\in T, s(\textbf{n}) \gtrsim \lambda^{1/2} \delta^{1/2}} |a_{n_2}|^2 \frac{\lambda^2}{s(\textbf{n})^2 b(\textbf{n})^2}\right)^{1/2} \\
        &\leq \left(\sum_{\textbf{n}\in T, s(\textbf{n}) \gtrsim \lambda^{1/2} \delta^{1/2}} |a_{n_2}|^2 \frac{\lambda^2}{s(\textbf{n})^2 b(\textbf{n})^2}\right)^{1/2}.
    \end{split}\]
    
    \myindent Now, let us first compute the sum when $n_2,n_3 \in Z$ are fixed. In particular, the top of any zero sum set with second and third coordinates $n_2$, $n_3$ is fixed and equals $t := n_3 - n_2$. Moreover, each such zero sum set is determined entirely by its side $s\gtrsim \lambda^{1/2} \delta^{1/2}$, and its base is $2s + t$. Hence, we need to compute
    \[\sum_{s \gtrsim \lambda^{1/2} \delta^{1/2}} \frac{\lambda^2}{s^2 (2s + t)^2} 1_{(n_2 - s,n_2,n_3,n_3 + s)\in T}.\]
    
    \myindent The interest is that we may now use the improved spacing for zero sum sets given in Proposition \ref{uncertainty}. Indeed, let us enumerate those zero sum sets $\textbf{n} \in T$ with fixed second and third coordinates $n_2,n_3$ by increasing side, say $s(\textbf{n}^{(1)}) < s(\textbf{n}^{(2)}) <...$. Applying Proposition \ref{uncertainty}, observe that, if 
    \[s(\textbf{n}^{(i+k)}) - s(\textbf{n}^{(i)}) \gtrsim \frac{\lambda \delta}{b(\textbf{n}^{(i)})},\]
    then there holds
    \[s(\textbf{n}^{(i+k)}) - s(\textbf{n}^{(i)}) \gtrsim \frac{\lambda}{b(\textbf{n}^{(i)})}.\]
    
    \myindent Thanks to this spacing fact, we may inductively partitionate the $\textbf{n}^{(i)}$ into clusters, in the following sense : find a sequence $1 = i_1 < i_2 < ...$, and denote $I_l := \{i_l \leq i < i_{l+1}\}$. Then, we may impose
    
    \myindent i. Inside a cluster $i\in I_l$, there holds 
    \[s(\textbf{n}^{(i)}) - s(\textbf{n}^{(i_l)}) \lesssim \frac{\lambda \delta}{b(\textbf{n}^{(i_l)})}.\]
    \myindent In particular, the size of the cluster $I_l$ is bounded by $\max\left(\frac{\lambda \delta}{b(\textbf{n}^{(i_l)})}, 1\right)$.
    
    \myindent ii. Moreover, between two clusters,
    \[s(\textbf{n})^{(i_{l+1})} - s(\textbf{n}^{(i_l)}) \gtrsim \frac{\lambda}{b(\textbf{n}^{(i_l)})}.\]
    
    \myindent We claim that, as a consequence, for $l \geq 1$, and for any $i\in I_l$, there holds
    \[s(\textbf{n}^{(i)}) b(\textbf{n}^{(i)}) \gtrsim l\lambda.\]
    
    \myindent Indeed, let us prove this fact inductively. For $l = 1$, since we assume that $s(\textbf{n}^{(1)}) \gtrsim \lambda^{1/2} \delta^{1/2}$, then there holds
    \[b(\textbf{n}^{(1)} )s(\textbf{n}^{(1)} ) \geq s(\textbf{n}^{(1)} )^2 \gtrsim \lambda \delta.\]
    Thus, Proposition \ref{uncertainty} yields that $s(\textbf{n}^{(1)}) b(\textbf{n}^{(1)} ) \gtrsim \lambda$.
    
    \myindent Now, if this is true for some $l\geq 1$, write
    \[\begin{split} s(\textbf{n}^{(i_{l+1})}) b(\textbf{n}^{(i_{l+1})}) &\geq (\textbf{n}^{(i_{l+1})})b(\textbf{n}^{(i_l)})\\   &=(s(\textbf{n})^{(i_{l+1})} - s(\textbf{n}^{(i_l)})) b(\textbf{n}^{(i_l)}) +s(\textbf{n}^{(i_l)}) b(\textbf{n}^{(i_l)}) \\
    &\gtrsim \lambda + l \lambda = (l+1) \lambda.
    \end{split}\]
    
    \myindent As a conclusion, and, since for any $l$ there holds $b(\textbf{n}^{(i_l)}) \geq t$, we may write
    \[\begin{split}
        \sum_{s \gtrsim \lambda^{1/2} \delta^{1/2}} \frac{1}{s^2 (2s + t)^2} 1_{(n_2 - s,n_2,n_3,n_3 + s)\in T} &\leq \sum_{l\geq 1} \frac{\lambda^2}{(s(\textbf{n}^{(i_l)}) b(\textbf{n}^{(i_l)}))^2} |I_l| \\
        &\lesssim \sum_{l\geq 1} \frac{1}{l^2} |I_l|\\
        &\lesssim \sup_l |I_l|\\
        &\lesssim \max\left(\frac{\lambda \delta}{t}, 1\right)
    \end{split}\]
    
    \myindent Overall, summing finally on $n_2$ and $t$ yields
    
    \[\begin{split}
        \left(\sum_{\textbf{n}\in T, s(\textbf{n}) \gtrsim \lambda^{1/2} \delta^{1/2}} |a_{n_2}|^2 \frac{\lambda^2}{s(\textbf{n})^2 b(\textbf{n})^2}\right)^{1/2} &= \left(\sum_{n_2 \in Z} \sum_{t\geq 1} \sum_{s \gtrsim \lambda^{1/2} \delta^{1/2}} \frac{\lambda^2}{s^2 (2s + t)^2} 1_{(n_2 - s,n_2,n_2 + t,n_2 + t+ s)\in T}\right)^{1/2} \\
        &\lesssim \left(\sum_{n_2 \in Z} |a_{n_2}|^2\sum_{\lambda \geq t\geq 1} \left(\frac{\lambda \delta}{t} + 1_{n_2 + t \in Z}\right)\right)^{1/2}\\
        &\lesssim  \left(\sum_{n_2 \in Z} |a_{n_2}|^2\left(\lambda\delta \ln(\lambda) +  \#Z\right)\right)^{1/2}\\
        &\lesssim(\lambda \delta \ln(\lambda))^{1/2},
    \end{split}\]
    since the cardinal of $Z$ is bounded by $O(\lambda \delta)$ thanks to the spectral estimate \eqref{improvedremainder}. This concludes the proof.
\end{proof}

\section{Estimate of the remainder terms}\label{secremaind}

\myindent In this section, we prove that all remaining terms are error terms, in the following sense: first, in the exterior case, we prove $I_{-,-,+}(\textbf{n})$ is the leading term, as was claimed in Section \ref{secmain}. Precisely, we will only prove that
\begin{equation}\label{estremainder}
    \forall (\pm,\pm,\pm) \neq (-,-,+) \qquad |I_{\pm,\pm,\pm}(\textbf{n})| \lesssim \frac{\lambda}{s(\textbf{n}) b(\textbf{n})},
\end{equation}
which is enough to conclude thanks to the counting estimate \eqref{summation}. Second, we prove that, in the caustic case, there holds 
\begin{equation}\label{estcaus}
    \forall (\pm,\pm) \qquad I_{\pm,\pm}(\textbf{n}) = O(\lambda^{-\infty})
\end{equation} 
as long as $\lambda \notin T_\alpha$, which implies the interior estimate in Proposition \ref{defregimes}.

\subsection{Estimate of $I_{\pm,\pm,\pm}( \textbf{n})$}\label{subsecext}

\myindent In this section, we prove estimate \eqref{estremainder} for any $\textbf{n}$ such that $\min(s(\textbf{n}), t(\textbf{n})) \gtrsim \lambda^{1/2} \delta^{1/2}$. We recall that we need to estimate the oscillatory integral
\[I_{\pm,\pm,\pm}(\textbf{n}) := \int dr \eta_{n_4}^{ext}(r) e^{i\lambda \Phi_{\pm,\pm,\pm}(r,\boldsymbol{\nu})} a_{\textbf{n}}^{ext}(r),\]
where
\[\Phi_{\pm,\pm,\pm}(r,\boldsymbol{\nu}) = f(r,\nu_1) \pm f(r,\nu_2) \pm f(r,\nu_3) \pm f(r,\nu_4).\]

\myindent As for the case $(-,-,+)$, the analysis is essentially two steps: first, proving lower bounds on $|\partial_r \Phi_{\pm,\pm,\pm}|$ using the properties of $\nu \mapsto \partial_r f(r,\nu)$. Second, integrate by parts in $r$ a large number of times and control the resulting integral, possibly by cutting it into different pieces, and a boundary term.

\subsubsection{Easy monotonicity cases : $(+,+,+),(+,+,-),(+,-,+),(-,+,+),(-,-,-)$}\label{subsubseceasy}

\myindent Thanks to the monotonicity and nonpositivity of $\nu \mapsto \partial_r f(r,\nu)$,  we may already prove the following lower bound: in the cases $(+,+,+),(+,+,-),(+,-,+),(-,+,+),(-,-,-)$, and on the support of $\eta_{n_4}^{ext}$, there holds
\begin{equation}\label{boundpartPhiremaind}
    |\partial_r \Phi(r,\boldsymbol{\nu})| \gtrsim (r-\nu_4)^{-1/2}.
\end{equation}

\myindent Moreover, for higher order derivatives, in any cases, there holds
\[\left|\partial_r^p\Phi_{-,-,+} \right|\lesssim r^{-1}(r-\nu_4)^{3/2-p}.\]

\begin{proof}
    The second inequality is straightforward. Regarding the first, observe that
    
    \myindent i. In the case $(+,+,+)$, $\partial_r \Phi_{+,+,+} \Phi$ is the sum of four terms of the same sign, so its absolute value is larger than the absolute value of any of them, for example $|\partial_r f(r,\nu_4)|$, itself larger than $(r-\nu_4)^{1/2}$.
    
    \myindent ii. In the case $(-,+,+)$ (resp $(+,-,+)$), $\partial_r f(r,\nu_1) - \partial_r f(r,\nu_2) < 0$ (resp $\partial_r f(r,\nu_1) - \partial_r f(r,\nu_3)$ since $\nu \mapsto \partial_r f(r,\nu)$ is increasing. Hence, since it is moreover negative, we find a lower bound by $|\partial_r f(r,\nu_4)|$.
    
    \myindent iii. In the case $(+,+,-)$, the same reasoning leads (for example) to a lower bound $|\partial_r f(r,\nu_2)|$, which is itself larger than $|\partial_r f(r,\nu_4)|$.
    
    \myindent iv. Finally, in the case $(-,-,-)$, one should write
    \[\begin{split}
        \partial_r f(r,\nu_1) - \partial_r f(r,\nu_2) - \partial_r f(r,\nu_3) - \partial_r f(r,\nu_4) &= \left[\partial_r f(r,\nu_1) - \partial_r f(r,\nu_2) - \partial_r f(r,\nu_3) + \partial_r f(r,\nu_4)\right] - 2 \partial_r f(r,\nu_4) \\
        &\geq -2 \partial_r f(r,\nu_4), 
    \end{split}\]
    where we use the positivity of the second order average, see Lemma \ref{convavg}, hence we again find the same lower bound.
\end{proof}

\myindent Now, we may apply the gradient estimate to integrate by parts $K$ times in $I_{\pm,\pm,\pm}(\textbf{n})$. Indeed, using the vector field
\[J u := \frac{d}{dr} \left(\frac{1}{\partial_r \Phi_{\pm,\pm,\pm}(\cdot, \boldsymbol{\nu})} u \right),\]
we find that
    \[\left|I_{\pm,\pm,\pm}(\textbf{n})\right| \leq \lambda^{-K} \int \left|J^K\left(\eta_{n_4}^{ext} a_{\textbf{n}}^{ext} \right) (r)\right| dr + B_K(\boldsymbol{\nu}),\]
    where $B_K(\boldsymbol{\nu})$ is the boundary term 
    \[B_K(\boldsymbol{\nu})= \sum_{k=1}^K \lambda^{-k} \frac{1}{\partial_r \Phi_{\pm,\pm,\pm}(1, \boldsymbol{\nu})} \left|J^{k-1} \left(\eta_{n_4}^{ext} a_{\textbf{n}}^{ext} \right) (1) \right|.\]
    
    \myindent Now, we claim that, with a similar proof than for estimate \eqref{estJK}, there holds, for some constant $C >0$,
    \[\left| J^K \left(\eta_{n_4}^{ext} a_{\textbf{n}}\right)(r)\right| \lesssim \frac{1}{|\partial_r \Phi_{\pm,\pm,\pm}(r,\boldsymbol{\nu})|^K (r-\nu_4)^{K + C}}.\]
    
    \myindent In particular, the upper bound is itself bounded by $(r-\nu_4)^{-C - 3/2 K}$ thanks to estimate \eqref{boundpartPhiremaind}. Hence, on the one hand, for $r = 1$, since $1-\nu_4 \geq \eps,$ we have the straightforward bound
    \[B_K(\boldsymbol{\nu}) \lesssim_\eps \lambda^{-1},\]
    whereas, on the other hand, for $r < 1$, there holds for another constant $C'$,
    \[\lambda^{-K}\left|J^K\left(\eta_{n_4}^{ext} a_{\textbf{n}}^{ext} \right) (r)\right| \lesssim (r-\nu_4)^{-C - 3/2 K} \lesssim \lambda^{-3/2 K \kappa + C'},\]
    which is a $O(\lambda^{-N})$ for all $N$ as long as $K$ is large enough. Overall, we have proved that
    \[|I_{\pm,\pm,\pm}(\textbf{n})| \lesssim_{\eps,\alpha} \lambda^{-1},\]
    which itself is bounded trivially by $\frac{\lambda}{s(\textbf{n}) b(\textbf{n})}$.
    
\subsubsection{Harder monotonicity cases : $(-,+,-), (+,-,-)$}\label{subsubsechard}

\myindent In the cases $(-,+,-)$ and $(+,-,-)$, the monotonicity of $\nu \mapsto \partial_r f(r,\nu)$ yields a priori that $\partial_r \Phi$ is negative, but no quantitative lower bound on its absolute value. Instead of computing exactly such quantitative lower bounds, we will rather prove that $\partial_r \Phi$, and higher order derivatives, satisfy at least the same estimate than than in Section \ref{subsectrapavg}. Hence, applying the same reasoning will yield at least the same bound. We thus need only prove two estimates, that is
\[|\partial_r \Phi(r,\boldsymbol{\nu})| \geq M_{g(r)} (\boldsymbol{\nu}),\]
and 
 \[\left|\frac{\partial_r^p\Phi} {\partial_r \Phi} \right|\lesssim (r-\nu_4)^{-(p-1)}.\]

\myindent For the first estimate, write
\[\begin{split}
    |\partial_r \Phi_{-,+,-} (r,\boldsymbol{\nu})| &= \partial_r f(r,\nu_2) - \partial_r f(r,\nu_1) + \partial_r f(r,\nu_4) - \partial_r f(r,\nu_3) \\
    &= \left[\partial_r f(r,\nu_4) - \partial_r f(r,\nu_3) - (\partial_r f(r,\nu_2) - \partial_r f(r,\nu_1)\right] + 2(\partial_r f(r,\nu_2) - \partial_r f(r,\nu_1)) \\
    &\geq \left[\partial_r f(r,\nu_4) - \partial_r f(r,\nu_3) - (\partial_r f(r,\nu_2) - \partial_r f(r,\nu_1)\right]\\
    &= M_{g(r)} (\boldsymbol{\nu}) \\
    |\partial_r \Phi_{+,-,-}| (r,\boldsymbol{\nu}) &= \partial_r f(r,\nu_4) - \partial_r f(r,\nu_1) + \partial_r f(r,\nu_3) - \partial_r f(r,\nu_2) \\
    &= M_{g(r)} (\boldsymbol{\nu}) + 2( \partial_r f(r,\nu_3) - \partial_r  f(r,\nu_1)) \\
    &\geq M_{g(r)} (\boldsymbol{\nu}).
\end{split}\]

\myindent For the second estimate, for example in the case $(-,+,-)$, write
\[\begin{split}
    \left|\partial_r^p \Phi_{-,+,-} (r,\boldsymbol{\nu}) \right| &= \left| \partial_r^p f(r,\nu_1) - \partial_r^p f(r,\nu_2) + \partial_r^p f(r,\nu_3) - \partial_r^p f(r,\nu_4) \right|\\
    &\leq \int_{\nu_1}^{\nu_2} |\partial_\nu \partial_r^p f(r,\nu)| d\nu + \int_{\nu_3}^{\nu_4}  |\partial_\nu \partial_r^p f(r,\nu)| d\nu\\
    &\leq (r-\nu_4)^{-(p-1)} \left(\int_{\nu_1}^{\nu_2} |\partial_\nu \partial_r f(r,\nu)| d\nu + \int_{\nu_3}^{\nu_4}  |\partial_\nu \partial_r f(r,\nu)| d\nu\right) \\
    &= (r-\nu_4)^{-(p-1)} |\partial_r \Phi_{-,+,-}(r,\boldsymbol{\nu})|.
\end{split}\]

\myindent In particular, the argument of Section \ref{subsectrapavg} applies and we find
\[|I_{-,+,-} (\textbf{n})|, |I_{+,-,-}(\textbf{n})| \lesssim \frac{\lambda}{s(\textbf{n}) b(\textbf{n})}.\]

\subsection{Estimate of $C_{\textbf{n}}^{caus}$}\label{subseccaus}

\myindent In this section, we prove estimate \eqref{estcaus} for any $\textbf{n} \in T \backslash T_\alpha$. Recall that we need to estimate the oscillatory integral 

\[I_{\pm,\pm}(\textbf{n}) := \int dr \eta_{n_4}^{caus}(r) e^{i\lambda \Phi_{\pm,\pm}(r,\boldsymbol{\nu}) }a_{\mathbf{n}}^{caus}(r) \lambda_{n_4}^{1/2} J_{n_4}(\lambda_{n_4} r),\]

	 \myindent In order to estimate $I_{\pm,\pm}$, we start by proving lower bounds on $\partial_r \Phi_{\pm,\pm}(r,\boldsymbol{\nu})$. Precisely, we claim that there holds
	 \begin{equation}\label{localboundcaus}
	     \left|\partial_r \Phi_{\pm,\pm}(r,\boldsymbol{\nu})\right| \gtrsim  r^{-1/2}\lambda^{-1/3 + 1/2 \kappa}.
	 \end{equation}
	 
	 \myindent Indeed, in the cases $(+,+),(+,-), (-,+)$, this easily follows from the fact that $\nu \mapsto \partial_r f(r,\nu)$ is increasing and negative, and the fact that, for $i=1,2,3$, and $r$ in the support of $\eta_{n_4}^{caus}$,
	 \[|\partial_r f(r,\nu_i)| \gtrsim r^{-1/2}(r-\nu_i)^{1/2} \gtrsim r^{-1/2}\lambda^{-1/3 + 1/2\alpha}.\]
	 
	 \myindent In the case $(-,-)$, one needs to use moreover convexity properties. First, for any $r$ in the support of $\eta_{n_4}^{caus}$, there holds $|r-\nu_4| \lesssim \lambda^{-2/3 + \kappa}$. Moreover, we may bound $|\partial_\nu \partial_r f(r,\nu_i)|\lesssim s(\boldsymbol{\nu})^{-1/2}$. Hence, for $i = 1,2,3$, there holds
	 \[\left|f(r,\nu_i) - f(\nu_4,\nu_i)\right| \lesssim s(\boldsymbol{\nu})^{-1/2}\lambda^{-2/3 +\kappa} \lesssim \lambda^{-1/3 + \kappa - \alpha/2}.\]
	 
	 \myindent In particular, if we define the strictly convex function
	 \[g(\nu) := f(\nu_4,\nu),\]
	 which vanishes at $\nu = \nu_4$, we may write
	 \[\partial_r \Phi_{-,-}(r,\boldsymbol{\nu}) = M_g(\boldsymbol{\nu}) + O(\lambda^{-1/3 + \kappa - \alpha/2}).\]
	 \myindent Now, from the explicit expression of second order averages given by Lemma \ref{convavg}, we may bound
	 \[M_g(\boldsymbol{\nu}) \geq \frac{1}{2} s(\boldsymbol{\nu})^2 \inf_{\nu \in (\nu_3,\nu_4)} g''(\nu).\]
	 
	 \myindent Using the explicit expression of $\partial_\nu^2\partial_r f(r,\nu)$, we may further bound
	 \[M_g(\boldsymbol{\nu}) \gtrsim r^{-1/2}s(\boldsymbol{\nu})^{2 - 3/2} \gtrsim r^{-1/2}\lambda^{-1/3 +\alpha/2}.\]
	 \myindent To conclude, observe that
	 \[\frac{\lambda^{-1/3 + \alpha/2}}{\lambda^{-1/3 + \kappa - \alpha/2}} = \lambda^{\alpha - \kappa} >> 1,\]
	 since we assumed that $\alpha > \kappa$. In particular, this proves the bound in the case $(-,-)$.
	 
	 \quad
	 
	 \myindent Now, thanks to the bound \eqref{localboundcaus}, we may integrate by parts $K$ times n $r$ on the support of $\eta_{n_4}^{caus}(r)$, and find that
	 \[|I_{\pm,\pm}| \leq \lambda^{-K} \int \left|J^K\left(\eta_{n_4}^{caus} a \lambda^{1/2} J_{n_4}(\lambda_{n_4} \cdot)\right)(r) \right| dr,\]
	 where $J$ is defined by
	 \[Ju := \frac{d}{dr}\left(\frac{1}{\partial_r \Phi_{\pm,\pm}(\cdot, \boldsymbol{\nu})} u\right).\]
	 
	 \myindent Now, observe that, for $r$ in the support of $\eta_{n_4}^{caus}$, we may write, for all $k\geq 0$, using the bound of Lemma \ref{derlemm},
	 \[\left| \frac{d^k}{dr^k} \left(\lambda^{1/2} J_{n_4}(\lambda_{n_4} \cdot )\right)(r) \right| \lesssim \lambda^{C + 2/3 k + 1/2 k\kappa} r^{-k/2}.\]
	 
	 \myindent Using moreover the bounds \eqref{boundderacaus} on the derivatives of $\eta_{n_4}^{caus}$ and $a$, there holds, for some constant $C > 0$, for all $k\geq 0$,
	 \[\left|\frac{d^k}{dr^k}\left(\eta_{n_4}^{caus} a \lambda^{1/2} J_{n_4}(\lambda_{n_4} \cdot)\right)(r)\right| \lesssim \lambda^{C +2/3 k + k\kappa} r^{-K/2},\]
	 from which we easily deduce that, using \eqref{localboundcaus}, for some constant $C > 0$, there holds
	 \[\left|J^K\left(\eta_{n_4}^{caus} a \lambda^{1/2} J_{n_4}(\lambda_{n_4} \cdot)\right)(r) \right| \lesssim \lambda^{K + C - K(1/2\alpha - \kappa)}.\]
	 
	\myindent In particular, choosing $\kappa <1/2 \alpha$, there holds
	 \[\lambda^{-K} \int \left|J^K\left(\eta_{n_4}^{caus} a \lambda^{1/2} J_{n_4}(\lambda_{n_4} \cdot)\right)(r) \right| dr = O(\lambda^{-\infty}) \qquad K \to \infty.\]

\appendix 

\section{Estimates of Bessel functions}\label{AppBessel}
\subsection{$L^4$ norm of joint eigenfunctions}\label{Appnorm}

\myindent In this appendix, we prove the following uniform bound on the $L^4$ norm of joint eigenfunctions.

\begin{lemma}\label{l4norm}
    For all $n\in \Z$, for all $\lambda \geq 1$, there holds
    \[\left\| \lambda^{1/2} J_n(\lambda |\cdot|)\right\|_{L^4(\mathbb{D}^2)}\lesssim \ln(\lambda)^{1/4},\]
    where the upper bound is independent of $n$.
\end{lemma}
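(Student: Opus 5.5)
We have to bound $\int_0^1 \lambda^2 J_n(\lambda r)^4\, r\,dr \lesssim \ln\lambda$ uniformly in $n\ge 0$. Substituting $t=\lambda r$ gives
\[\int_0^1 \lambda^2 J_n(\lambda r)^4 r\,dr=\int_0^\lambda J_n(t)^4\, t\,dt .\]
So the claim reduces to the uniform estimate $\int_0^\lambda J_n(t)^4 t\,dt\lesssim \ln(2+\lambda)$ for all $n$, with a constant independent of $n$. We split $[0,\lambda]$ according to the position relative to the turning point $t=n$, and use the classical uniform bounds on Bessel functions throughout.

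\textbf{Bounds used.}
\begin{itemize}
\item $|J_n(t)|\le 1$ for all $t$. This follows directly from the integral formula \eqref{defJn}.
\item Forbidden region: for $t\le n-n^{1/3}$ there is exponential (or at least superpolynomial) decay, as in Lemma \ref{expdecay}. More precisely, $|J_n(t)|\lesssim n^{-1/3}e^{-c\,((n-t)/n^{1/3})^{3/2}}$. For very small $t$ one can use $J_n(t)\le (t/2)^n/n!$ instead.
\item Transition region: for $|t-n|\lesssim n^{1/3}$, $|J_n(t)|\lesssim n^{-1/3}$. This is the Airy scaling.
\item Oscillatory region: for $t\ge n+n^{1/3}$, the stationary phase bound (the uniform version of Lemma \ref{Lemmstatphase}) gives $|J_n(t)|\lesssim (t^2-n^2)^{-1/4}$.
\end{itemize}
All of these are uniform in $n$; they are the standard Olver/Landau-type bounds.

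\textbf{Contribution of each region.}
\begin{itemize}
\item Forbidden region. The integrand is at most $t\,n^{-4/3}e^{-c(\cdot)^{3/2}}$. Integrating in $t$ over scale $n^{1/3}$ gives $n\cdot n^{-4/3}\cdot n^{1/3}=O(1)$. The tail further from the turning point decays, so the total is $O(1)$.
\item Transition window. Its length is $n^{1/3}$ and the integrand is $\lesssim n\cdot n^{-4/3}$, giving $O(1)$.
\item Oscillatory region $t\in[n+n^{1/3},\lambda]$. The integrand is
\[\frac{t}{t^2-n^2}=\frac{t}{(t-n)(t+n)}\le \frac{1}{t-n}.\]
Integrating yields $\ln\!\big((\lambda-n)/n^{1/3}\big)\lesssim \ln\lambda$.
\item Case $n=0$ (or $n$ bounded). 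Use $|J_n|\le1$ on $[0,1]$ and $|J_n(t)|\lesssim t^{-1/2}$ for $t\ge1$. The integral is then $\int_1^\lambda dt/t=\ln\lambda$.
\item Case $n>\lambda$. Only the first two regions (and the lower-order bounds) appear, so the integral is $O(1)$.
\end{itemize}
Summing the contributions gives $\|\lambda^{1/2}J_n(\lambda|\cdot|)\|_{L^4}^4\lesssim \ln\lambda$, and taking the fourth root gives the statement of Lemma \ref{l4norm}.

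\textbf{Main obstacle.} The delicate point is the uniformity in $n$ of the bound $|J_n(t)|\lesssim (t^2-n^2)^{-1/4}$, which is needed down to $t-n\sim n^{1/3}$. At that scale it must match the Airy bound $n^{-1/3}$, and indeed $(n\cdot n^{1/3})^{-1/4}=n^{-1/3}$, so the two bounds agree. If a clean reference is unavailable, I would prove it by stationary phase on \eqref{defJn}. The two critical points $\alpha=\pm\arccos(n/t)$ have second derivative $\sqrt{t^2-n^2}$, which gives the size $(t^2-n^2)^{-1/4}$ directly. Near the turning point, where the critical points coalesce, I would instead use a van der Corput estimate with the third derivative. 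Since $|\partial_\alpha^3(t\sin\alpha)|\approx t$ there, this gives $|J_n(t)|\lesssim t^{-1/3}$ uniformly. Combining the two regimes, $|J_n(t)|\lesssim \min\big(t^{-1/3},(t^2-n^2)^{-1/4}\big)$ for $t\ge n$, which suffices for every estimate above.
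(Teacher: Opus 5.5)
Your proof is correct, but it takes a different route from the paper. The paper does no splitting at all. After the same substitution it imports Guo's uniform bound $\bigl(\int_0^\infty |J_n(t)|^p\,t\,dt\bigr)^{1/p}\le C(p-4)^{-1/p}$ for all $p>4$, with $C$ independent of $n$. It then applies H\"older on $[0,\lambda]$ with $p=4(1+\eps)$ to get $\int_0^\lambda J_n(t)^4\,t\,dt\lesssim \eps^{-1}\lambda^{2\eps}$, and takes $\eps=1/(2\ln\lambda)$. So in the paper the logarithm is the cost of sitting at the endpoint $p=4$ of a family of uniform estimates. You instead prove the truncated $p=4$ estimate by hand, from pointwise uniform bounds on either side of the turning point at the Airy scale $n^{1/3}$.

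The paper's argument is two lines once the black box is accepted. Yours is self-contained modulo classical pointwise bounds, and it mirrors the interior/caustic/exterior trisection used later in the paper. It also shows that the $\ln\lambda$ comes only from the allowed region, through $\int dt/(t-n)$. Finally, it naturally yields $\ln(2+\lambda)$, whereas the paper's choice of $\eps$ (via the step $\eps^{-1/(1+\eps)}\le\eps^{-1}$) tacitly needs $\lambda\ge e^{1/2}$.

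One simplification: in the forbidden region you do not need the Airy-type bound with the $n^{-1/3}$ prefactor. Your van der Corput sketch does not supply it, and neither does Lemma \ref{expdecay}, which only gives smallness at distance $\gtrsim\lambda^{1/3+\kappa}$ from the turning point. For $0\le t<n$ the phase derivative $n-t\cos\alpha\ge n-t$ is monotone on each half-period. The first-derivative test applied to \eqref{defJn} therefore gives $|J_n(t)|\le 2/(\pi(n-t))$, hence
\[\int_0^{n-n^{1/3}} t\,J_n(t)^4\,dt\lesssim n\int_{n^{1/3}}^{\infty}\frac{ds}{s^4}=\tfrac{1}{3}.\]
With this, your whole argument rests only on \eqref{defJn} and the van der Corput lemma.
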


\begin{proof}
    Let us start by writing that
    \begin{equation}\label{eqappendixnorm}
        \left\| \lambda^{1/2} J_n(\lambda \cdot)\right\|_{L^4(\mathbb{D}^2)}^4 = 2\pi \lambda^2 \int_0^1 |J_n(\lambda r)|^4 r dr = 2\pi \int_0^\lambda |J_n(r)|^4 r dr.
    \end{equation}
    \myindent Now, using \cite{guo1997uniform}[Lemma 3.2], there holds, for all $p > 4$, and for all $n$,
    \[\left(\int_0^{\infty} |J_n(r)|^p r dr\right)^{1/p} \leq C(p-4)^{-1/p},\]
    where the constant $C > 0$ is independent of $p$ and of $n$. In particular, applying Hölder inequality to \eqref{eqappendixnorm}, we may write that, for any $\eps > 0$,
    \[\begin{split}
        \int_0^\lambda |J_n(r)|^4 rdr &\leq \left(\int_0^\lambda |J_n(r)|^{4(1+\eps)} r dr \right)^{\frac{1}{1+\eps}} \left(\lambda^2/2\right)^{\frac{\eps}{1 + \eps}} \\
        &\leq 2^{-\frac{\eps}{1+\eps}} \left(C (4\eps)^{-\frac{1}{4(1+\eps)}}\right)^{4} \lambda^{\frac{2\eps}{1+\eps}}\\
        &\leq 2^{-\frac{\eps}{1+\eps}} C^4 4^{-\frac{1}{1+\eps}} \eps^{-\frac{1}{1+\eps}} \lambda^{\frac{2\eps}{1+\eps}}\\
        &\leq C' \eps^{-\frac{1}{1+\eps}} \lambda^{\frac{2\eps}{1+\eps}}\\
        &\leq C' \eps^{-1} \lambda^{2\eps},
    \end{split} \]
    with $C' = C^4$. Now, the function $\eps \mapsto -\ln \eps + 2\eps \ln \lambda$ reaches its minimum at $\eps = \frac{1}{2 \ln \lambda}$. For this value of $\eps$, the upper bound reads
    \[\int_0^\lambda |J_n(r)|^4 r dr \leq 2 e C'\ln \lambda,\]
    from which the claim follows.
\end{proof}

\subsection{Decay of $J_n(\lambda \cdot)$ in the interior region}\label{Appintdecay}

\myindent In this section, we prove the following lemma.

\begin{lemma}\label{expdecay}
    For all $n \in Z$ and for all $r\in [0,1]$ such that $\mu_n - r \gtrsim \lambda^{-2/3 + \kappa}$, there holds, for any $N\geq 0$
    \[|\lambda_n^{1/2} J_n(\lambda_n r) | \lesssim_{\kappa,N} \lambda^{-N}.\]
\end{lemma}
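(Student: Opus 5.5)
We will prove the superpolynomial decay of $J_n(\lambda_n r)$ in the classically forbidden region by a contour deformation in the integral representation \eqref{defJn}. We set $t = \lambda_n r$ and write $n = \lambda_n \mu_n$, so that
\[J_n(\lambda_n r) = \frac{1}{2\pi}\int_{\mathbb{T}} e^{i\lambda_n(\mu_n \alpha - r\sin\alpha)}\,d\alpha.\]
The integrand is entire and $2\pi$-periodic in $\alpha$, so we may shift the contour to $\alpha \mapsto \alpha + i\beta$ for any real $\beta\geq 0$; the vertical sides cancel by periodicity. On the shifted contour the modulus of the integrand is $\exp\big(\lambda_n(-\mu_n\beta + r\cosh\beta\,\ldots)\big)$. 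More precisely,
\[\mathrm{Re}\big(i(\mu_n(\alpha+i\beta) - r\sin(\alpha+i\beta))\big) = -\mu_n\beta + r\cos\alpha\sinh\beta \leq -\mu_n\beta + r\sinh\beta.\]
This gives $|J_n(\lambda_n r)| \leq \exp(-\lambda_n F(\beta))$ with $F(\beta) := \mu_n\beta - r\sinh\beta$.

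Next we optimize over $\beta$ by taking $\cosh\beta = \mu_n/r$ (for $r>0$; the case $r=0$ is trivial when $n\geq 1$, and $n=0$ is excluded since $\mu_n>0$ is forced by $\mu_n - r>0$). This yields the classical Debye-type bound
\[|J_n(\lambda_n r)| \leq \exp\Big(-\lambda_n\big(\mu_n\,\mathrm{arccosh}(\mu_n/r) - \sqrt{\mu_n^2 - r^2}\big)\Big) =: \exp(-\lambda_n G(r,\mu_n)),\]
which is the analytic continuation of $f$ to the forbidden side.

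It then remains to bound $G$ from below. As a function of $r\in(0,\mu_n]$, we have $G(\mu_n,\mu_n)=0$ and $\partial_r G = -\sqrt{\mu_n^2-r^2}/r$, so
\[G(r,\mu_n) = \int_r^{\mu_n}\frac{\sqrt{\mu_n^2-s^2}}{s}\,ds \geq \int_r^{\mu_n}\frac{\sqrt{\mu_n(\mu_n-s)}}{\mu_n}\,ds = \tfrac{2}{3}\mu_n^{-1/2}(\mu_n-r)^{3/2},\]
using $s\leq\mu_n$ and $\mu_n+s\geq\mu_n$. Since $\mu_n\leq 1$, this gives $G\gtrsim(\mu_n-r)^{3/2}\gtrsim\lambda^{-1+3\kappa/2}$, with a constant uniform in $n$, and in particular no lower bound on $\mu_n$ is needed. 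Hence
\[\lambda_n^{1/2}|J_n(\lambda_n r)| \leq \lambda_n^{1/2}\exp(-c\lambda^{3\kappa/2}),\]
using $\lambda_n\simeq\lambda$, and this is $O_{\kappa,N}(\lambda^{-N})$ for every $N$.

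The only delicate point is making sure the constants are uniform in $n$ and $r$, including $r$ near $0$ and small $\mu_n$. The elementary lower bound on $G$ above handles this. Near $r=0$ one can alternatively note that $G\to\infty$, or simply cap $\beta$ at a fixed value, since any admissible $\beta$ gives a valid upper bound.
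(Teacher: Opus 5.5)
Your proof is correct, but it takes a different route from the paper.

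\textbf{The paper's route.} The paper stays on the real torus. With $\phi(\alpha)=\frac{\mu_n}{r}\alpha-\sin\alpha$, it uses $\phi'(\alpha)\geq(\mu_n-r)/r$ and the ratio bound $|\phi''/\phi'|\lesssim\max\bigl(1,((\mu_n-r)/r)^{-1/2}\bigr)$. This ratio bound comes from splitting into $|\alpha|\lesssim\eps^{1/2}$ and $|\alpha|\gtrsim\eps^{1/2}$, with $\eps=(\mu_n-r)/r$. It then integrates by parts $K$ times to get $|J_n(\lambda_n r)|\lesssim\lambda_n^{-K}(\mu_n-r)^{-3K/2}\lesssim\lambda^{-\frac32\kappa K}$, and chooses $K$ large in terms of $N,\kappa$.

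\textbf{Your route.} You use that the integrand is entire and $2\pi$-periodic to push the contour to $\mathrm{Im}\,\alpha=\beta$ with $\cosh\beta=\mu_n/r$. This gives the classical Debye-type bound $|J_n(\lambda_n r)|\leq e^{-\lambda_n G(r,\mu_n)}$. Your formula $\partial_r G=-\sqrt{\mu_n^2-r^2}/r$ and the elementary estimate $G\geq\frac23\mu_n^{-1/2}(\mu_n-r)^{3/2}$ are both right.

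\textbf{What each approach buys.} Your argument gives a genuinely exponential bound $\exp(-c\lambda^{3\kappa/2})$ rather than a superpolynomial one. It needs no bookkeeping of the iterated integration-by-parts operator. Uniformity in $n$ and $r$, including $r\to 0$ and small $\mu_n$, comes essentially for free, and the Airy scaling $(\mu_n-r)^{3/2}$ appears transparently in $G$. The paper's argument is pure real-variable nonstationary phase and does not use analyticity of the phase. It follows the same template as Lemma \ref{derlemm} and the proof of Lemma \ref{Lemmstatphase}, so it is closer to arguments that survive when eigenfunctions are only known through smooth oscillatory representations.

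\textbf{Minor slip.} Your first description of the modulus has $r\cosh\beta$ where it should be $r\cos\alpha\sinh\beta$; you correct this in the next display.
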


\begin{proof}
     For $r = 0$, observe that, for the condition $\mu_n - r > 0$ to be non empty, there must hold $n \neq 0$, and hence $J_n(0) = 0$. Now, assume that $r \neq 0$ and write
    \[J_n(\lambda_n r) = \int_{\mathbb{T}} e^{i\lambda_n r \left(\frac{\mu_n}{r}\alpha - \sin \alpha\right)} d\alpha.\]
    \myindent Now, the phase function 
    \[\phi : \alpha \mapsto \frac{\mu_n}{r} \alpha - \sin \alpha\]
    satisfies
    \[\phi'(\alpha) = \frac{\mu_n}{r} - \cos \alpha = \frac{\mu_n - r}{r} + 1 - \cos \alpha \geq \frac{\mu_n - r}{r} > 0\]
    and all its higher order derivatives are uniformly bounded by $1$. Moreover, we claim that there holds 
    \[\left|\frac{\phi''(\alpha)}{\phi'(\alpha)}\right| \lesssim \max(1,\left(\frac{\mu_n - r}{r}\right)^{-1/2}).\]
    \myindent Indeed, assume that $\eps := \frac{\mu_n - r}{r} \leq 1$. Then, for $\alpha \lesssim \eps^{1/2}$, one needs only write that
    \[\left|\frac{\phi''(\alpha)}{\phi'(\alpha)}\right| = \frac{\sin \alpha}{\phi'(\alpha)} \lesssim \frac{\eps^{1/2}}{\eps}.\]
    \myindent For $\alpha \gtrsim \eps^{1/2}$, one may instead use that 
    \[\left|\frac{\phi''(\alpha)}{\phi'(\alpha)}\right| \leq \left|\frac{\sin \alpha}{1 - \cos \alpha}\right| \lesssim \alpha^{-1} \lesssim \eps^{-1/2}.\]
    \myindent Thus, thanks to the above estimate, we see that, after integrating by parts $K$ times in $\alpha$, one may bound
    \[\begin{split}
        |J_n(\lambda_n r)| &\lesssim (\lambda_n r)^{-K} \left(\frac{\mu_n-r}{r}\right)^{-K} \left(\max\left(1,\frac{\mu_n - r}{r}\right)\right)^{-K/2} \\
        &\lesssim (\lambda_n r)^{-K} \left(\frac{\mu_n-r}{r}\right)^{-K} \left(\mu_n - r\right)^{-K/2} \\
        &\lesssim \lambda_n^{-K} (\mu_n -r)^{-3K/2} \\
        &\lesssim \lambda^{-3/2\kappa K},
    \end{split}\]
    from which the claim follows by choosing $K$ large enough depending on $N,\kappa$.
\end{proof}

\subsection{Derivatives of $J_n(\lambda \cdot)$ near the caustic}\label{Appcaus}

\begin{lemma}\label{derlemm}
    There exists a constant $C > 0$ such that, for all $n \in Z$ such that $\mu_n \gtrsim \lambda^{-2/3 + \alpha}$, and for all $r\in [0,1]$ such that $|r- \mu_n| \lesssim \lambda^{-2/3+ \kappa}$, there holds
    \[\left|\frac{d^K}{dr^K} \left(\lambda_n^{1/2} J_n(\lambda_n r)\right)\right| \lesssim_{K,\alpha,\kappa} \lambda^{C + \frac{2}{3} K + \frac{1}{2}\kappa K} r^{-\frac{K}{2}}.\] 
\end{lemma}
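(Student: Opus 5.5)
Differentiating under the integral \eqref{defJn} gives
\[\frac{d^K}{dr^K}\left(\lambda_n^{1/2} J_n(\lambda_n r)\right) = \frac{\lambda_n^{1/2+K}}{2\pi}\int_{\mathbb{T}} (-i\sin\alpha)^K e^{i\lambda_n (\mu_n \alpha - r\sin\alpha)} d\alpha.\]
The naive loss would be $\lambda^K$. I will reduce it to $\lambda^{2K/3}$ by showing that the amplitude $\sin^K\alpha$ is effectively of size $\lambda^{-K/3}$ (up to the $\kappa$ and $r$ losses) on the region that contributes. The phase $\phi(\alpha) = \mu_n\alpha - r\sin\alpha$ has $\phi'(\alpha) = \mu_n - r\cos\alpha$. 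Near the caustic, $|r-\mu_n| \lesssim \lambda^{-2/3+\kappa}$, so the critical points satisfy $\cos\alpha = \mu_n/r$. This gives $\alpha^2 \simeq 2(r-\mu_n)/r$, and hence $|\alpha| \lesssim \lambda^{-1/3+\kappa/2} r^{-1/2}$. The stationary points are therefore confined to a small neighbourhood of $\alpha = 0$, where $|\sin\alpha|^K \lesssim \lambda^{-K/3 + K\kappa/2} r^{-K/2}$. This gives exactly the claimed $\lambda^{2K/3 + K\kappa/2} r^{-K/2}$ times $\lambda^{1/2}$.

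To make this rigorous, I split $\mathbb{T}$ with a smooth cutoff $\chi(\alpha/\beta)$. The scale is $\beta = \lambda^{-1/3+\kappa/2} r^{-1/2}$, perhaps with an extra small power $\lambda^{\kappa'}$ to be safe, capped at a fixed small constant. By symmetry I also treat $\alpha$ near $\pi$: there $\phi'(\alpha) = \mu_n + r|\cos\alpha| \gtrsim \mu_n \gtrsim \lambda^{-2/3+\alpha}$, and one more integration by parts per derivative handles it, so this region is not stationary. On the inner piece I use the trivial bound: amplitude at most $\beta^K$ and interval length at most $\beta$. This gives $\lambda^{1/2+K}\beta^K \lesssim \lambda^{C+2K/3+K\kappa/2} r^{-K/2}$.

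On the outer piece, $|\alpha| \geq \beta$, I integrate by parts with $L = (i\lambda_n\phi'(\alpha))^{-1}\partial_\alpha$. On this set
\[|\phi'(\alpha)| = |r(1-\cos\alpha) - (r-\mu_n)| \gtrsim r\alpha^2,\]
since $r\alpha^2 \geq r\beta^2 \gg |r-\mu_n|$. The same ratio estimates as in the proof of Lemma \ref{expdecay} apply: $|\phi^{(j)}/\phi'| \lesssim \alpha^{-(j-1)}$, and derivatives of $\sin^K\alpha$ or of the cutoff cost a factor $\alpha^{-1}$. Each integration by parts therefore gains $(\lambda r\alpha^3)^{-1} \leq (\lambda r \beta^3)^{-1}$. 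With the $r^{-1/2}$ built into $\beta$, this quantity is at most $\lambda^{-3\kappa/2}$, up to the $r$ bookkeeping. After $M$ integrations by parts, the outer piece is bounded by
\[\lambda^{1/2+K}\int_{|\alpha|\geq\beta} \alpha^K (\lambda r \alpha^3)^{-M} d\alpha,\]
which for $M$ large is dominated by its value at $\alpha = \beta$. That value is of the same order as the inner bound, so the two pieces together give the lemma.

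The main obstacle is the uniformity in $r$ when $r$, and hence $\mu_n$, is small. The hypothesis $\mu_n \gtrsim \lambda^{-2/3+\alpha}$ is there to keep $\lambda r \beta^3$ large and to keep $\beta$ bounded, so that the two stationary points near $0$ remain inside the inner window. I would check carefully that $\lambda r\beta^3 = \lambda^{3\kappa/2} r^{-1/2} \geq \lambda^{3\kappa/2}$ and that the cap $\beta \leq c$ is compatible with $r \gtrsim \lambda^{-2/3+\alpha}$. If the cap is hit, I use the trivial bound $|\sin\alpha| \leq 1$ instead. In that case $r^{-K/2} \gtrsim \lambda^{K/3}$ (up to the $\kappa$ losses), so the factor $\lambda^{C+2K/3+\kappa K/2} r^{-K/2}$ absorbs the naive $\lambda^K$. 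This small-$r$ regime is where the power $r^{-K/2}$ in the statement and the constant $C$ come from.
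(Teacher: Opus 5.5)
Your proposal is correct and is essentially the paper's proof. The paper cuts off at $|\alpha|\le C\sqrt{\eps}$ with $\eps=\lambda^{-2/3+\kappa}r^{-1}$, so $C\sqrt{\eps}$ is your $\beta$ up to a constant. It bounds the inner piece trivially using $|\sin\alpha|\lesssim\sqrt{\eps}$, and integrates by parts outside using $|\phi'|\gtrsim\eps$ and $|\phi''/\phi'|\lesssim\eps^{-1/2}$, gaining $(\lambda r\eps^{3/2})^{-1}\lesssim\lambda^{-3\kappa/2}$ per step, exactly as you do.

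Your separate treatment of $\alpha$ near $\pi$ and of the capped case is harmless but unnecessary in the paper's regime $\kappa<\alpha$. Indeed, $1-\cos\alpha\gtrsim\alpha^2$ on all of $[-\pi,\pi]$, and $\mu_n\gtrsim\lambda^{-2/3+\alpha}$ already forces $\beta\lesssim\lambda^{(\kappa-\alpha)/2}\ll 1$.

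One small correction: to ensure $r\beta^2\geq 2|r-\mu_n|$, enlarge $\beta$ by a large constant factor, as the paper does. Do not enlarge it by $\lambda^{\kappa'}$, since that costs $\lambda^{K\kappa'}$, which the $K$-independent constant $C$ in the statement cannot absorb.
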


\begin{proof}
    Let us start with the expression
    \[\frac{d^K}{dr^K} \left(\lambda^{1/2}_{k,n} J_n(\lambda_{k,n} r)\right) = \lambda_n^{1/2 + K} \int_{\mathbb{T}} e^{i\lambda_n (\mu_n \alpha - r \sin \alpha)} (-\sin \alpha)^K d\alpha.\]
    \myindent Now, define the phase function
    \[\phi : \alpha \mapsto \frac{\mu_n}{r} \alpha - \sin \alpha.\]
    \myindent Observe that $\frac{\mu_n}{r} = 1 + \frac{\mu_n - r}{r}$, where $\left|\frac{\mu_n -r}{r}\right|\lesssim \eps := \lambda^{-\frac{2}{3} + \kappa} r^{-1} \lesssim \lambda^{\kappa - \alpha} << 1$. Thus, writing
    \[|\phi'(\alpha)| = \left|\frac{\mu_n}{r} - \cos \alpha\right| = \left| 1 - \cos \alpha + \frac{\mu_n - r}{r}\right|,\]
    we find that, if $C > 0$ is a large enough constant, for $|\alpha| \geq C\sqrt{\eps}$, there holds 
    \[|\phi'(\alpha)| \gtrsim \eps, \qquad \left|\frac{\phi''(\alpha}{\phi'(\alpha)}\right| \lesssim \eps^{-1/2}.\]
    \myindent Thus, if we define $\zeta(\alpha)$ a smooth localizer on $|\alpha|\leq C \sqrt{\eps}$, such that $|\zeta^{(k)}(\alpha)|\lesssim \eps^{-k/2}$ for all $k$, we may integrate by parts $M$ times on the support of $1 - \zeta$ to find that
    \[\int_{\mathbb{T}} e^{i\lambda_n r \phi(\alpha)} (-\sin \alpha)^K (1 - \zeta(\alpha) d\alpha = \int_\mathbb{T} O((\lambda_n r \eps)^{-M} ) \eps^{-M/2} d\alpha.\]
    \myindent Now, write
    \[\lambda_n r \eps^{3/2 M} = \lambda_n r \lambda_n^{-1 + \frac{3}{2}\kappa} r^{-3/2} \gtrsim \lambda^{3/2 \kappa}.\]
    
    \myindent In particular, as $M\to \infty$, we find that
    \[\frac{d^K}{dr^K} \left(\lambda^{1/2}_{k,n} J_n(\lambda_{k,n} r)\right) = \lambda_n^{1/2 + K} \int_\mathbb{T} e^{i\lambda_n r \phi(\alpha)} (-\sin \alpha)^K \zeta(\alpha) d\alpha + O(\lambda^{-\infty}).\]
    \myindent To conclude, one needs only observe that, on the support of $\zeta$, there holds $|\sin \alpha|\leq |\alpha|\lesssim \sqrt{\eps}$. Overall, we find that
    \[\left|\frac{d^K}{dr^K} \left(\lambda^{1/2}_{k,n} J_n(\lambda_{k,n} r)\right)\right| \lesssim \lambda_n^{1/2 + K} \eps^{K/2}.\]
    \myindent Now, observe that
    \[\lambda_n^{1/2 + K} \eps^{K/2} = \lambda_n^{1/2 + K} \lambda^{-1/3 K + \frac{K}{2}\kappa} r^{-K/2},\]
    which concludes the proof.
\end{proof}

\subsection{Oscillatory behavior in the exterior region : proof of Lemma \ref{Lemmstatphase}}\label{applemm1}

\myindent Recall first that $\lambda_n^{1/2} J_n(\lambda_n r)$ is given by the renormalized oscillatory integral
\[\lambda_n^{1/2}J_n(\lambda_n r) = \lambda_n^{1/2}\int_{\mathbb{T}} \exp(i\lambda_n(\mu_n \alpha - r\sin \alpha)) d\alpha,\]
which may be written, with the change of large parameter $\lambda_n \to \lambda_n r \gtrsim \lambda^{1/3 +\alpha}$,

\[\lambda_n^{1/2} \int_{\mathbb{T}} \exp\left(i\lambda_n r \left(\frac{\mu_n}{r}\alpha - \sin \alpha \right)\right) d\alpha.\]

\myindent Now, the phase
\[\phi : \alpha \mapsto \frac{\mu_n}{r} \alpha - \sin \alpha\]
has two stationary points 
\[\pm \alpha_n(r) = \pm \arccos\left(\frac{\mu_n}{r}\right),\]
at which there holds
\[\begin{split}
    |\phi''(\pm\alpha_n)| = \sin(\alpha_n) = \sqrt{1 - (\mu_n / r)^2},
\end{split}\]
hence, the stationary phase lemma yields that, to the main order, there should hold
\[\lambda_n^{1/2}J_n(\lambda_n r) \simeq \frac{\cos\left(\lambda_n r \left(\frac{\mu_n}{r}  \alpha_n - \sin \alpha_n\right)\right))}{r^{1/2} \left(1 - (\mu_n/r)^2\right)^{1/4}} = \frac{\cos(\lambda_n(\mu_n \alpha_n - r\sin(\alpha_n)))}{(r^2 - \mu_n^2)^{1/4}}.\]

\myindent Thus, the difficulty comes from the need to carefully estimate the remainder term and its derivatives in $r$. Without loss of generality, we first carry out the analysis near the stationary point $\alpha_n$. We start by finding an appropriate change of variable near $\alpha_n$. For $h$ small, the phase function writes as
\[\phi(\alpha_n + h) - \phi(\alpha_n) = \frac{1}{2}\gamma h^2 + O(h^3),\]
where $\gamma(r, n) := \phi''(\alpha_n(r)) = \sqrt{1 - (\mu_n/r)^2}$ can be very small. On the contrary, the third derivative $\phi^{(3)}(\alpha_n)$ is typically of order 1. Hence, instead of the usual change of variable $\phi(\alpha_n + h) -\phi(\alpha_n) = \frac{1}{2}\gamma y^2$, we first introduce the change of variable $h= \gamma u$ so that there holds
\[\phi(\alpha_n + h) - \phi(\alpha_n) = \frac{1}{2}\gamma^3(u^2+ O(u^3)).\]

\myindent Now, it is natural to introduce the change of variable $y(u)$ so that
\[\phi(\alpha_n + h) - \phi(\alpha_n) = \frac{1}{2}\gamma^3(u^2+ O(u^3)) =\frac{1}{2} \gamma^3 y(u)^2.\]

\myindent Observe that $y(u)$ is given through Taylor formula by
\[y = u \left(2\int_0^1 (1 - t) \gamma^{-1} \phi''(\alpha_n + t \gamma u) dt \right)^{1/2}.\]

\myindent The interest is that this immediately yields that $u \mapsto y(u)$ is a smooth change of variable for $|u|\lesssim 1$, such that $|y^{(k)}(u)| \lesssim_k 1$ independently of $\gamma,\alpha_n$, and $y'(0) = 1$. In particular, one may invert the change of variable close to $0$, and find $\chi(y)$ a smooth localizer close enough to $0$ (independent of all parameters), which equals $1$ close to $0$, such that $y \mapsto u(y)$ is a smooth change of variable on $supp(\chi)$, with all its derivatives in $y$ bounded independently of $\gamma, \alpha_n$. Hence, letting
\[\zeta(\alpha) := \chi\left(y\left(\frac{\alpha - \alpha_n}{\gamma}\right)\right),\]
which is a smooth localizer supported close to $\alpha_n$, and which equals $1$ on an interval of size $\simeq \gamma$ around $\alpha_n$, we may write
\[\lambda_n^{1/2}\int e^{i\lambda_n r (\phi(\alpha)-\phi(\alpha_n))} \zeta(\alpha) = \lambda_n ^{1/2} \int e^{i\frac{1}{2}\lambda_n r \gamma^3 y^2} \chi(y) \gamma u'(y) dy.\]

\myindent Now, thanks for example to \cite{hormanderanalysis}[Theorem 7.7.5], there holds, for any $K \geq 0$,

\[\lambda_n^{1/2} \int e^{i\frac{1}{2}\lambda_n  r \gamma^3 y^2} \chi(y) \gamma u'(y) dy = \gamma^{-1/2} r^{-1/2} \left(\sum_{k=0}^K (\lambda_n r \gamma^3)^{-k} A_{2k}[u'](0) + O((\lambda_n r \gamma^3)^{-(K+1)}) \right),\]
where $A_{2k}$ are smooth differential operators of order $2k$ with universal constant coefficients, and $A_0[u'](0) = u'(0) = 1$. Now, observe that
\[\lambda_n r \gamma^3 = \lambda_n r (1-(\mu_n/r)^2)^{3/2} = \lambda_n r^{-2} (\mu_n + r)^{3/2} (\mu_n - r)^{3/2} \geq \lambda_n r^{-1/2}(\mu_n -r)^{3/2} \gtrsim r^{-1/2} \lambda^{\frac{3}{2}\kappa} \gtrsim \lambda^{\frac{3}{2}\kappa},\]
hence, for any fixed $N,\kappa$, there is a $K$ large enough so that the remainder is $O(\lambda^{-N})$. In particular, if, with this choice of $K$, we define 
\[a_{n,N,\lambda} (r) := 2\sum_{k=1}^K (\lambda_n r \gamma^3)^{-k} A_{2k}[u'](0),\]
we find that
\[\lambda_n^{1/2}\int e^{i\lambda_n r \phi(\alpha)} \zeta(\alpha) = \frac{e^{i\lambda_nr \phi(\alpha_n)}}{2} a_{n,N,\lambda}(r) + O_{\kappa,N}(\lambda^{-N}).\]

\myindent Similarly, for $\alpha < 0$, since $\alpha \mapsto \zeta(-\alpha)$ localizes around $-\alpha_n$, the same analysis yields that
\[\lambda_n^{1/2}\int e^{i\lambda_n r \phi(\alpha)} \zeta(-\alpha) d\alpha = \frac{e^{-i\lambda_nr \phi(\alpha_n)}}{2 \gamma^{1/2} r^{1/2}} a_{n,N,\lambda}(r) + O_{\kappa,N}(\lambda^{-N}),\]
and hence
\[\lambda_n^{1/2}\int e^{i\lambda_n r \phi(\alpha)} (\zeta(\alpha) + \zeta(-\alpha))d\alpha = \frac{\cos(\lambda_n r \phi(\alpha_n))}{r^{1/2}\gamma^{1/2}} a_{n,N,\lambda}(r),\]
which almost yields the result. The only point to check is that
\[\int e^{i\lambda_n r \phi(\alpha)} (1 - (\zeta(\alpha) + \zeta(-\alpha))) d\alpha = O_{\kappa}(\lambda^{-\infty}).\]

\myindent However, this is straightforward from integration by parts in $\alpha$. Indeed, observe that, on the support of $(1 - (\zeta(\alpha) + \zeta(-\alpha)))$, there holds uniformly $|\phi'(\alpha)|\gtrsim \gamma^2$, and $\frac{|\phi''(\alpha)|}{|\phi'(\alpha)|} \lesssim \gamma^{-1}$. Moreover, there also holds $|\zeta^{(k)}(\alpha)|\lesssim \gamma^{-k}$. Hence, when integrating by parts $M$ times in $\alpha$, we find that
\[\left| \int e^{i\lambda_n r \phi(\alpha)} (1 - (\zeta(\alpha) + \zeta(-\alpha))) d\alpha \right| \lesssim \int \frac{1}{\lambda^{M} r^{M} \gamma^{2M}} \gamma^{-M} \lesssim (\lambda r \gamma^3)^{-M},\]
which, with the same analysis than above, is a $O(\lambda^{-\infty})$.

\quad

\myindent To conclude the proof of the lemma, we need only prove estimate \eqref{deranearcaus} on $a_{n,N,\lambda}(r)$. First, for $K = 0$, observe that, since there holds uniformly $\lambda r \gamma^3 \gtrsim 1$, and since the derivatives of $u$ in $y$ are uniformly bounded, it is straightforward that $|a_{n,N,\lambda}(r)|\lesssim_\kappa 1$.

\myindent Now, when differentiating on $r$, the only unbounded quantities which may appear are derivatives of $r^{-k}$ and negative powers of derivatives in $r$ of $\gamma$. However, observe that the dependency in $r$ of all such quantities is given by sums and product of powers of $r$, of $r+\mu_n$, and of $r-\mu_n$. Using moreover Leibniz rule, and, since $r-\mu_n$ is the smallest of all three above quantities, we find that, when differentiating $K$ times, the worst quantity which appears is $(r-\mu_n)^{-K}$, hence the upper bound.

\section{Extension to surfaces isometric under a $S^1$ action}\label{Appext}

\myindent In this section, we give the outline of the proof of Theorem \ref{thmext}, that is we explain how to reduce the result to a number of estimates which can then be proved using the same methods than for the Euclidean disk.

\myindent Following for example \cite{donnelly1978g}, let us split $L^2(\Sigma)$ into the direct sum $L^2(\Sigma) = \bigoplus_{n\in\Z} L^2_n(\Sigma)$, where $L^2_n(M)$ transforms according to the character $e^{in\theta}$ of $S^1$ under the group action of $S^1$. We may thus find a joint basis of eigenfunction of $(\sqrt{-\Delta},P)$ of the form $\phi_{k,n}$, $k\in \N$, $n\in \Z$, with joint eigenvalues $(\lambda_{k,n},n)$, where, for any $n$, $(\phi_{k,n})_{k\geq 1}$ is an orthonormal basis of $L^2_n(\Sigma)$ which diagonalizes the restriction of $\Delta$ to $L^2_n(\Sigma)$. From Sturm-Liouville theory, the joint eigenvalues are simple. Now, since the fixed points of the action of $S^1$ are isolated, they form a finite set, say $F$. Thus, there are polar coordinates of the form $(\theta,r)$ on the open set $\Sigma\backslash F$ (one may choose $r$ a coordinate on the orbit space $\Sigma /S^1$ where singular orbits are removed), for which there holds the separation of variables
w\[\phi_{k,n}(\theta,r) = e^{in\theta} \Phi_{k,n}(r),\]
and for which the Riemannian volume form writes locally $v(r) dr d\theta$. 

\myindent Let us thus fix $\omega$ and $\chi(x,D)$ as in Theorem \ref{thmext}. Observe that, since $\chi$ is supported away from lagrangian tori with maximally degenerate caustics, there holds $\pi(supp(\chi)) \subset \Sigma \backslash F$, where $\pi : T^*\Sigma \to \Sigma$ is the canonical projection. In particular, we may reduce the problem to the following : if $\zeta(r)$ is a localizer on an arbitrarily small compact subset of $\Sigma \backslash F$, which is the reunion of $S^1$ orbits, then we need only prove that $\|\zeta P_{\lambda,\delta} \chi(x,D)\|_{L^2(\Sigma) \to L^4(\Sigma)} \lesssim \lambda^{1/8} \delta^{1/8}\ln(\lambda)^{1/4}$.

\myindent Observe that, by decomposing further $\chi(x,D) = \sum_{i=1}^K \chi_i(x,D)$, for some partition of unity, and using the triangular inequality, we may without loss of generality assume that $\chi(x,D)$ is supported on an arbitrarily small conic open subset of $T^*\Sigma$ (as long as it is independent of $\lambda,\delta$). In particular, we may assume that the cone $C_1$ in Definition \ref{defadapted} is a small conic neighborhood of a direction, say $\R_+^*\xi_0$. Thus, if $M=(p,\Theta)$ is the moment application on $T^*\Sigma$, we see that $\chi$ is supported on $M^{-1}\{(K(\xi), \xi_2) \qquad \xi \in C_1\}$. We denote $\tilde{C} := \{(K(\xi), \xi_2) \qquad \xi \in C_1\}$.

\myindent Now, from standard semiclassical analysis, for any $(k,n)\in \Z \times \N$, if $(\lambda_{k,n},n) \notin \tilde{C}$, then $\|\chi(x,D) \phi_{k,n}\|_{L^{\infty}(\Sigma)} = O(\lambda_{k,n}^{-\infty})$. Hence, if we denote, in the spirit of Section \ref{secnotations},
\[\mathcal{Z} := \{(k,n) \qquad \text{such that} \ \lambda_{k,n}\in[\lambda- \delta,\lambda + \delta] \ \text{and} \ (\lambda_{k,n},n) \in \tilde{C}\},\]
we need only prove that, for any $u := \sum_{(k,n)\in \mathcal{Z}} a_{k,n}\phi_{k,n}$ such that $\sum |a_{k,n}|^2 = 1$, there holds
\[\|\zeta u \|_{L^4(\Sigma)}^4 \lesssim \lambda^{1/2} \delta^{1/2}\ln(\lambda).\]

\myindent Now, observe that, since $\sqrt{-\Delta}$ has vanishing subprincipal symbol, the Born-Oppenheimer quantification conditions yields that there is a Maslov index $\mu \in \frac{1}{4}\Z$ such that, for any $(k,n) \in \mathcal{Z}$, one can write
\[\lambda_{k,n} = K(m_n + \mu, n) + O(\lambda_{k,n}^{-1}),\]
where $m_n\in \N$ is the unique integer such that $K(m_n + \mu, n) \in [\lambda -2\delta, \lambda + 2\delta]$ (for example). Thus, we may parameterize $\mathcal{Z}$ by its projection on the second variable $Z$, and write the elements of $\mathcal{Z}$ as $(k_n, n)$, for $n\in Z$. Observe that, from the curvature assumption on the curve $\{K = 1\}$, and from \cite{colin1977nombre}, the cardinal of $Z$ is of order $O(\lambda \delta)$, as long as $\delta \geq \lambda^{-1/3}$.

\myindent Now, following the same computations than in Section \ref{secnotations}, there holds, dropping the $k_n$ index in the $a_{k,n}$

\[\|\zeta u\|_{L^4(\Sigma)}^{4} = \sum_{n_1,n_2,n_3,n_4 \in Z} a_{n_1}\overline{a_{n_2} a_{n_3}} a_{n_4} \int v(r) dr \Pi_{i=1}^4 \Phi_{k_{n_i},n_i}(r) \int_{\mathbb{T}} e^{i(n_1 - n_2 - n_3 + n_4) \theta} d\theta.\]

\myindent In particular, the theta integral vanishes if the $(n_i)$ don't satisfy the zero sum set condition, as in Section \ref{subsectrapezes}. We may thus introduce the same set $T$ of zero sum sets, and observe that we need only prove that, if
\[C_{\textbf{n}} := \int v(r) dr \Pi_{i=1}^4 \Phi_{k_{n_i},n_i}(r),\]
then there holds
\[\sum_{\textbf{n} \in T} |a_{n_1}| |a_{n_2}||a_{n_3}||a_{n_4}| |C_{\textbf{n}}| \lesssim \lambda^{1/2} \delta^{1/2}\ln(\lambda).\]

\quad

\myindent The proof is now extremely similar, since the main ingredients still holds in that model

i. First, the cardinal of $Z$ is bounded by $O(\lambda \delta)$.

\quad

ii. Second, the quadrilateral Jarnik's Lemma in Section \ref{subsecuncertainty} still holds, using the one-to-one correspondence between points of $Z$, and integer points $(m,n)$ which are in a small neighborhood of size $O(\delta)$ of the curve $\{K = \lambda\}$ which has a nonvanishing curvature.

\quad 

iii. Third, for $n\in Z$, the eigenfunction $\phi_{k_n,n}$ is an oscillatory function of order zero associated to the Lgrangian torus $\ell_n := M^{-1}(1, n/\lambda_{k_n,n})$, with semiclassical parameter $\lambda_{k,n}^{-1}$ see for example \cite{san2006systemes}[Proposition 3.2.12]. In particular, modulo $O(\lambda^{-\infty})$ remainders, $\Phi_{k_n,n}(r)$ is described by standard semiclassical BKW analysis for a nondegenerate potential well, with semiclassical parameter $\lambda_{k_n,n}^{-1}$.

\quad

iv. Last, one has the same type of estimates on the coefficients $C_{\textbf{n}}$. Indeed, there are essentially two cases.

a) Either, for any $\xi \in C_1$, the caustic set of $L_\xi$ is at a positive distance of $supp(\chi)$. Then, $\phi_{k_n,n}$ writes on the support of $\zeta$ as a standard BKW function, since it is an oscillatory function on a Lagrangian submanifold evaluated away from caustics. One may then use nonstationary phase arguments to prove for any $\alpha >0$, and any zero sum set $\textbf{n} \in T$, if $s(\textbf{n}), t(\textbf{n}) \gtrsim \lambda^\alpha$, then $C_{\textbf{n}} = O(\lambda^{-\infty})$. Moreover, the $L^4$ norm of $\phi_{k_n,n}$ is uniformly bounded, thus $C_\textbf{n} = O(1)$ for any $\textbf{n}$. Hence, the counting estimates are in fact even better. We won't give a more detailed proof for these estimates since they don't yield the main order term, and since they are quite universal.

b) Either we may use the universality of the Airy function to describe the transition between classically allowed and classically forbidden region in order to express $\Phi_{k_n,n}$. Precisely, this will yield that, modulo a small remainder, one can write
\[\Phi_{k_n,n}(r) \simeq Ai(\rho(r,n) \lambda_{k_n,n}^{2/3}) a(r,n),\]
where $a(r,n), \rho(r,n)$ are smooth symbols uniformly bounded in $r,n$ along with all their derivatives, and $\rho(r,n)$ satisfies $\rho = 0$ only at the caustic, and $\partial_r \rho \neq 0$. In particular, the point is that $\rho(r,n) \simeq \pm (r- r_n)$, if we denote by $\{r= r_n\}$ the caustic set of $\ell_n$. The point is that $\Phi_{k_n,n}(r)$ thus satisfies exactly the same estimates than the functions $\lambda_n^{1/2}J_n(\lambda_n r)$ on the disk, i.e. its $L^4$ norm is $O(\ln(\lambda))$ from a direct computation, and there is a tripartition such that, up to changing a sign

- if $r-r_n \lesssim - \lambda^{-2/3 + \kappa}$, then $\Phi_{k_n,n}(r)$ is a $O(\lambda^{-\infty})$.

- if $|r-r_n| \lesssim \lambda^{-2/3 + \kappa}$, then the derivatives in $r$ of $\Phi_{k_n,n}$ grow like powers of $\lambda^{2/3 + \kappa}$.

- if $r - r_n \gtrsim \lambda^{-2/3 + \kappa}$, one may use the asymptotic development of the Airy function to obtain the equivalent of Lemma \ref{Lemmstatphase}, that is $\Phi_{k_n,n}(r)$ is an oscillatory function of the form
\[\cos(\lambda_{k_n,n} f(r,n)) a_n(r),\]
where $|(\partial_r^k)a_n(r)| \lesssim (r- r_n)^{-k}$, and where \[f(r,n) = c \rho(r,n)^{3/2}\]
for some constant $c$. The point is that this function satisfies exactly the same convexity estimates and derivatives bounds than the function $f(r,\nu)$ which appeared above. Indeed, when differentiating more than two times, the dominant term will be of the form $(\partial_r \rho)^K \rho(r,n)^{3/2 - K}$, which is larger than all other terms provided the support of $\zeta$ is small enough, since it scales like an inverse power of the distance $(r- r_n)$. 

\printbibliography

\end{document}